\newtheorem{thm}{Theorem}[section]
\newtheorem*{thm*}{Theorem}
\newtheorem{cor}[thm]{Corollary}
\newtheorem{lem}[thm]{Lemma}
\newtheorem{prop}[thm]{Proposition}
\newtheorem*{prop*}{Proposition}
\newtheorem{ques}{Question}
\newtheorem{conj}{Conjecture}
\newtheorem*{conj1}{Conjecture \textbf{$1^*$}}
\newtheorem*{conj2}{Conjecture \textbf{$2^*$}}
\newtheorem*{conj12}{Conjectures \textbf{1} and \textbf{2}}
\newcommand{\conjone}{$\mathop{1^*\;}$}
\newcommand{\conjtwo}{$\mathop{2^*\;}$}
\newtheorem*{conj*}{Conjecture}
\newtheorem*{dfn*}{Definition}
\theoremstyle{definition}
\newtheorem{rem}[thm]{\textbf{Remark}}
\newtheorem*{rmk*}{Remark}
\newtheorem*{fact*}{Fact}
\theoremstyle{proof}
\newcommand{\vol}{\textrm{vol}}
\newcommand{\norm}[1]{\left\Vert#1\right\Vert}
\newcommand{\snorm}[1]{\Vert#1\Vert}
\newcommand{\abs}[1]{\left\vert#1\right\vert}
\newcommand{\set}[1]{\left\{#1\right\}}
\newcommand{\brac}[1]{\left(#1\right)}
\newcommand{\scalar}[1]{\left \langle #1 \right \rangle}
\newcommand{\Real}{\mathbb{R}}
\newcommand{\eps}{\varepsilon}
\newcommand{\N}{\mathbb{N}} 
\newcommand{\I}{\mathcal{I}}
\newcommand{\Llambda}{\lambda^{\text{Lip}}}
\def \F {\mathcal{F}}
\def \D {\mathcal{D}}
\newcommand{\CD}{\text{CD}}
\newcommand{\Ric}{\text{Ric}}
\newcommand{\lambdanu}{\nu}
\newlength{\defbaselineskip}
\newcommand{\setlinespacing}[1]           {\setlength{\baselineskip}{#1 \defbaselineskip}}
\numberwithin{equation}{section}
\begin{document}

\title{Spectral Estimates, Contractions and Hypercontractivity}
\author{Emanuel Milman\textsuperscript{1}}
\date{}

\footnotetext[1]{Department of Mathematics,
Technion - Israel Institute of Technology, Haifa 32000, Israel. Supported by ISF (grant no. 900/10), BSF (grant no. 2010288) and Marie-Curie Actions (grant no. PCIG10-GA-2011-304066).
The research leading to these results is part of a project that has received funding from the European Research Council (ERC) under the European Union's Horizon 2020 research and innovation programme (grant agreement No 637851).
Email: emilman@tx.technion.ac.il.}

\maketitle

\begin{abstract}
Sharp comparison theorems are derived for all eigenvalues of the weighted Laplacian, for various classes of weighted-manifolds (i.e. Riemannian manifolds endowed with a smooth positive density). Examples include Euclidean space endowed with strongly log-concave and log-convex densities, extensions to $p$-exponential measures, unit-balls of $\ell_p^n$, one-dimensional spaces and Riemannian submersions. Our main tool is a general Contraction Principle for ``eigenvalues" on arbitrary metric-measure spaces. Motivated by Caffarelli's Contraction Theorem, we put forth several conjectures pertaining to the existence of contractions from the canonical sphere (and Gaussian space) to weighted-manifolds of appropriate topological type having (generalized) Ricci curvature positively bounded below; these conjectures are consistent with all known isoperimetric, heat-kernel and Sobolev-type properties of such spaces, and would imply sharp conjectural spectral estimates. While we do not resolve these conjectures for the individual eigenvalues, we verify their Weyl asymptotic distribution in the compact and non-compact settings, obtain non-asymptotic estimates using the Cwikel--Lieb--Rozenblum inequality, and estimate the trace of the associated heat-kernel assuming that the associated heat semi-group is hypercontractive. As a side note, an interesting trichotomy for the heat-kernel is obtained. 
\end{abstract}

\section{Introduction}

A weighted-manifold is a triplet $(M^n,g,\mu)$, where $(M^n,g)$ is a complete smooth $n$-dimensional Riemannian manifold, endowed with a measure $\mu = \exp(-V(x)) d\vol_g(x)$ having smooth positive density with respect to the Riemannian volume measure $\vol_g$. The manifold can be compact or non-compact, but for simplicity we assume it is without boundary. In addition, there is no restriction on the total mass of the measure $\mu$. The associated weighted Laplacian $\Delta_{g,\mu}$ is defined as:
\[
\Delta_{g,\mu} f := \exp(V) \nabla_g \cdot (\exp(-V)  \nabla_g f) = \Delta_g f - \scalar{\nabla_g V,\nabla_g f} ,
\] 
so that the usual integration by parts formula is satisfied with respect to $\mu$:
\[
\forall f,h \in C_c^\infty(M) \;\;\; - \int_M (\Delta_{g,\mu} f)  h d\mu = \int_M \scalar{\nabla_g f , \nabla_g h} d\mu = - \int_M f (\Delta_{g,\mu} h) d\mu .
\]
Here $\nabla_g$ denotes the Levi-Civita connection, $\Delta_g$ denotes the usual Laplace-Beltrami operator, and we use $\scalar{\cdot,\cdot} = g$. 
One immediately sees that $-\Delta_{g,\mu}$ is a symmetric and positive semi-definite linear operator on $L^2(\mu)$ with dense domain $C_c^\infty(M)$, the space of compactly supported smooth functions on $M$. In fact, it is well-known  (e.g. \cite[Proposition 3.2.1]{BGL-Book}) 
that the completeness of $(M,g)$ ensures that $-\Delta_{g,\mu}$ is essentially self-adjoint on the latter domain, and so its graph-closure is its unique self-adjoint extension. We continue to denote the resulting positive semi-definite self-adjoint operator by $-\Delta_{g,\mu}$, with corresponding domain $Dom(\Delta_{g,\mu})$. 
By the spectral theory of self-adjoint operators (see Subsection \ref{subsec:manifold-contraction}), the spectrum $\sigma(-\Delta_{g,\mu})$ is a subset of $[0,\infty)$. When the spectrum is discrete (such as for compact manifolds), it is composed of isolated eigenvalues of finite multiplicity which increase to infinity; we denote these by $\lambda_k = \lambda_k(M^n,g,\mu)$, and arrange them in non-decreasing order (repeated by multiplicity) $0 \leq \lambda_1 \leq \lambda_2 \leq \ldots $.
In the discrete case, when $M$ is connected and $\mu$ has finite mass, we always have $0 = \lambda_1 < \lambda_2$.
For the standard definition of $\set{\lambda_k}$ when the spectrum is possibly non-discrete (as the first eigenvalues until the bottom of the essential spectrum), we refer to Subsection \ref{subsec:manifold-contraction}.
In this work, we would like to investigate the spectrum of various classes of weighted-manifolds. 

\begin{dfn*}
The weighted-manifold $(M^n,g,\mu)$ satisfies the Curvature-Dimension condition $\CD(\rho,N)$, $\rho \in \Real$ and $N=\infty$, if:
\begin{equation} \label{eq:Ric-Tensor}
\Ric_{g,\mu} := \Ric_g + \nabla^2_g V \geq \rho \; g 
\end{equation}
as symmetric 2-tensors on $M$. Here $\Ric_g$ denotes the Ricci curvature tensor, and $\Ric_{g,\mu}$ is called the generalized (infinite-dimensional) Ricci tensor. 
In this work, we restrict the Curvature-Dimension condition to connected manifolds. 
\end{dfn*}

The generalized Ricci tensor (\ref{eq:Ric-Tensor}) was introduced by Lichnerowicz \cite{Lichnerowicz1970GenRicciTensorCRAS,Lichnerowicz1970GenRicciTensor}, and extended to arbitrary generalized dimension $N \in (-\infty,\infty]$ by Bakry \cite{BakryStFlour} (cf. Lott \cite{LottRicciTensorProperties}). Note that in the constant density case $V \equiv c$, the generalized Ricci tensor boils down to the classical one. The Curvature-Dimension condition was introduced by Bakry and \'Emery in equivalent form in \cite{BakryEmery} (in the more abstract framework of diffusion generators). Its name stems from the fact that the generalized Ricci tensor incorporates information on curvature and dimension from both the geometry of $(M,g)$ and the measure $\mu$, and so $\rho$ may be thought of as a generalized-curvature lower bound, and $N$ as a generalized-dimension upper bound. 
With the exception of this Introduction, we will always assume that $N=\infty$ whenever referring to the Curvature-Dimension condition, so we omit the more general definition involving an arbitrary $N$ (cf. \cite{EMilmanSharpIsopInqsForCDD}). The $\CD(\rho,N)$ condition has been an object of extensive study over the last two decades (see e.g. also \cite{QianWeightedVolumeThms,LedouxLectureNotesOnDiffusion,CMSInventiones, CMSManifoldWithDensity, VonRenesseSturmRicciChar,BakryQianGenRicComparisonThms,WeiWylie-GenRicciTensor,MorganBook4Ed,EMilmanSharpIsopInqsForCDD,KolesnikovEMilman-Reilly, BGL-Book, KlartagLocalizationOnManifolds, EMilmanNegativeDimension} and the references therein), especially since Perelman's work on the Poincar\'e Conjecture \cite{PerelmanEntropyFormulaForRicciFlow}, and the extension of the Curvature-Dimension condition to the metric-measure space setting by Lott--Sturm--Villani \cite{SturmCD12,LottVillaniGeneralizedRicci}.

\subsection{Spectrum Comparison for Positively Curved Weighted-Manifolds} \label{subsec:intro-positive}

Let $\gamma^n_\rho$ denote the $n$-dimensional Gaussian probability measure with covariance $\frac{1}{\rho} Id$, namely $c^n_{\rho} \exp(-\rho \abs{x}^2/2 ) dx$, where $c_{\rho} > 0$ is a normalization constant. When $\rho = 1$, we simply write $\gamma^n$ for the standard $n$-dimensional Gaussian measure. 
It is well known \cite{Ledoux-Book,BGL-Book} that the one-dimensional Gaussian space $(\Real,\abs{\cdot},\gamma^1_\rho)$ serves as a model comparison space for numerous functional inequalities (such as isoperimetric \cite{BakryLedoux}, log-Sobolev \cite{BakryEmery} and spectral-gap $\lambda_2$ \cite{BGL-Book}), for the class of connected weighted-manifolds $(M^n,g,\mu)$ satisfying $\CD(\rho,\infty)$ with $\rho > 0$ (``positively curved weighted-manifolds"). The starting point of this work was to explore the possibility that these classical comparison properties also extend to all higher-order eigenvalues of $-\Delta_{g,\mu}$. Contrary to many functional inequalities, which remain invariant under tensorization, thus implying that the comparison space may be chosen to be one-dimensional,  the spectrum tensorization property naturally forces us to compare $(M^n,g,\mu)$ to the $n$-dimensional space $(\Real^n,\abs{\cdot},\gamma^n_\rho)$ having the same (topological) dimension. Note that positively curved weighted-manifolds always have discrete spectrum, since they necessarily satisfy a log-Sobolev inequality  by the Bakry--\'Emery criterion \cite{BakryEmery}, and the latter is known to imply (in our finite-dimensional setting!) discreteness of spectrum (see e.g. \cite{MazyaBook,WangSuperPoincareAndIsoperimetry, CianchiMazya-DiscretenessOfSpectrum}). In addition, if $(M^n,g,\mu)$ is positively curved then necessarily $\mu$ has finite total-mass \cite[Theorem 3.2.7]{BGL-Book}.

\begin{ques}[Spectral Comparison Question]  \label{ques:spectrum}
Given an $n$-dimensional connected weighted-manifold $(M^n,g,\mu)$ satisfying $\CD(\rho,\infty)$ with $\rho > 0$, 
does it hold that:
\[
\forall k \geq 1 \;\;\; \lambda_k(M^n,g,\mu) \geq \lambda_k(\Real^n,\abs{\cdot},\gamma^n_\rho) \;\;\; ?
\]
\end{ques}

At first, this question may seem extremely bold and at the same time classical and well-studied. As for the latter impression, we are not aware of any previous instances of Question \ref{ques:spectrum}.
The former impression perhaps stems from the extensive body of work in trying to just provide sharp lower and upper bounds on the first eigenvalue gap $\lambda_2 - \lambda_1$ under various conditions (e.g. \cite{KLS,SpanishBook,AshbaughBenguria-EigenvaluesSurvey,BenguriaLindeLoewe-EigenvaluesSurvey,AndrewsClutterbuck-FundamentalGapForSchrodinger}), or various other conjectured lower bounds on the entire spectrum, such as Polya's conjecture (see e.g. \cite{ChavelEigenvalues,LiYau-PolyaAndCLR, Laptev-LiebThirringSurvey}). 

\medskip

Unfortunately, one cannot expect to have a positive answer to the above question in general, at least not for the first eigenvalues. The easiest counterexample is given by the canonical $n$-sphere, rescaled to have Ricci curvature equal to $1$ (times the metric), so that it satisfies $\CD(1,\infty)$; its $(n+2)$-th eigenvalue (given by a linear function on the sphere's canonical embedding in $\Real^{n+1}$) is equal to $\frac{n}{n-1}$, whereas the corresponding eigenvalue for the $n$-dimensional Gaussian space is already equal to $2$ (see Subsection \ref{subsec:sphere} for more details). 

\medskip
Nevertheless, we can show:

\begin{thm}[Spectral Comparison for Positively Curved $(\Real^n,\abs{\cdot},\mu)$] \label{thm:spectrum} 
Question \ref{ques:spectrum} has a positive answer for any Euclidean space $(\Real^n,\abs{\cdot},\mu)$  satisfying $\CD(\rho,\infty)$ with $\rho > 0$. 
\end{thm}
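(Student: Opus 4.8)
The plan is to use the general Contraction Principle (announced in the abstract) which says that if there is a $1$-Lipschitz map pushing one metric-measure space onto another, then the eigenvalues of the target are dominated by those of the source. So the strategy reduces to constructing, for each Euclidean space $(\Real^n,\abs{\cdot},\mu)$ with $\mu = \exp(-V)\,dx$ satisfying $\nabla^2 V \geq \rho\, Id$, a $1$-Lipschitz map $T \colon (\Real^n,\abs{\cdot},\gamma^n_\rho) \to (\Real^n,\abs{\cdot},\mu)$ with $T_*\gamma^n_\rho = \mu$. Once such a $T$ is in hand, the Contraction Principle gives $\lambda_k(\Real^n,\abs{\cdot},\mu) \geq \lambda_k(\Real^n,\abs{\cdot},\gamma^n_\rho)$ for all $k \geq 1$, which is exactly the assertion of Theorem~\ref{thm:spectrum}.

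The existence of such a map is precisely the content of \emph{Caffarelli's Contraction Theorem}: if $\mu = \exp(-V)\,dx$ with $\nabla^2 V \geq \rho\, Id$, then the Brenier optimal transport map $T = \nabla \varphi$ (with $\varphi$ convex) pushing the Gaussian $\gamma^n_\rho$ forward to $\mu$ is $1$-Lipschitz. Indeed, after rescaling we may as well take $\rho = 1$ and transport the standard Gaussian $\gamma^n$, whose potential $\abs{x}^2/2$ is exactly $1$-uniformly-convex, onto a measure whose potential is at least as convex; Caffarelli's comparison argument for Monge--Amp\`ere equations then yields $0 \leq \nabla T \leq Id$, i.e. $T$ is a contraction. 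This is the ``motivated by Caffarelli's Contraction Theorem'' remark made explicit in the abstract, and it is the one place where the Euclidean hypothesis is genuinely used — a general positively curved weighted-manifold need not admit such a contraction from Gaussian space, as the sphere counterexample already shows.

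I would organize the proof in the following steps. First, reduce to $\rho = 1$ by the scaling $x \mapsto \sqrt{\rho}\, x$, under which $\gamma^n_\rho$ becomes $\gamma^n$, the eigenvalues scale by $\rho$ on both sides, and the $CD(\rho,\infty)$ condition becomes $CD(1,\infty)$. Second, invoke Caffarelli's Contraction Theorem to get the $1$-Lipschitz transport map $T \colon (\Real^n,\abs{\cdot},\gamma^n) \to (\Real^n,\abs{\cdot},\mu)$ with $T_*\gamma^n = \mu$; here one should check the mild regularity/technical hypotheses under which Caffarelli's theorem applies to our $\mu$ (smooth positive density, $\nabla^2 V \geq Id$), or else appeal to an approximation argument. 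Third, apply the Contraction Principle to this $T$ to conclude $\lambda_k(\Real^n,\abs{\cdot},\mu) \geq \lambda_k(\Real^n,\abs{\cdot},\gamma^n)$ for every $k$, and undo the scaling.

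The main obstacle is not the logical structure — which is essentially ``Caffarelli $+$ Contraction Principle'' — but making sure the Contraction Principle is stated and proved in enough generality to cover the situation here: one needs it to compare \emph{all} eigenvalues (the full Courant--Fischer / Rayleigh-quotient min-max characterization must be transported through a pushforward-compatible contraction, with the pullback $f \mapsto f \circ T$ acting as an isometric embedding $L^2(\mu) \hookrightarrow L^2(\gamma^n)$ that does not increase Dirichlet energy), and one must handle the possibility of non-discrete spectrum correctly via the agreed-upon convention for $\lambda_k$ up to the bottom of the essential spectrum. Given the Contraction Principle as a black box, the remaining work is genuinely routine.
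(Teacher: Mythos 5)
Your proposal is correct and is essentially identical to the paper's argument: the paper derives Theorem \ref{thm:spectrum} precisely by combining Caffarelli's Contraction Theorem (which supplies the contraction $T$ pushing $\gamma^n_\rho$ forward onto $\mu$ up to a finite constant) with the Contraction Principle, whose proof proceeds via the min-max characterization of metric eigenvalues and their identification with the spectral ones on complete weighted manifolds. The only cosmetic differences are your explicit rescaling to $\rho=1$ and your worry about non-discrete spectrum, which the paper sidesteps by noting that $CD(\rho,\infty)$ with $\rho>0$ already forces discreteness.
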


In view of the above counterexample and theorem, and for reasons which will become more apparent later on, it is plausible that some topological restrictions must be enforced to obtain a positive answer to Question \ref{ques:spectrum}. The simplest one is to assume that $(M^n,g)$ is diffeomorphic to Euclidean space. We tentatively formulate this as:
\begin{conj1}[Spectral Comparison Conjecture for Positively Curved $(\Real^n,g,\mu)$] Question \ref{ques:spectrum} has a positive answer for any $(\Real^n,g,\mu)$ satisfying $\CD(\rho,\infty)$ with $\rho > 0$. 
\end{conj1}

See Section \ref{sec:conclude} for a more refined version of this tentative conjecture. Clearly, the case $n=1$ boils down to Theorem \ref{thm:spectrum}, so the conjecture pertains to the range $n \geq 2$. We take this opportunity to also mention the work of Ledoux \cite{Ledoux-GammaAlgebra} (cf. Bakry and Bentaleb \cite{BakryBentaleb}), who showed how information on higher order iterated \emph{carr\'e-du-champ} operators (so called $\Gamma_k$ operators) may be used to obtain higher-order eigenvalue estimates for the generator $-\Delta_{g,\mu}$; however, here we only assume the $\CD(\rho,\infty)$ condition, which amounts to information on $\Gamma_2$ only (see \cite{BGL-Book} for more on $\Gamma$-Calculus).

\subsection{Spectrum Comparison for Additional Spaces}

Our method of proof of Theorem \ref{thm:spectrum}, described in the next subsection, is very general, and in particular also yields the following additional results:

\begin{thm} \label{thm:Kolesnikov}
Let $(\Real^n,\abs{\cdot},\mu)$ denote a Euclidean weighted-manifold where $\mu = \exp(-V(x)) dx$ is a probability measure satisfying $\nabla^2 V \leq \rho Id$. Then:
\[
\forall k \geq 1 \;\;\; \lambda_k(\Real^n,\abs{\cdot},\mu) \leq \lambda_k(\Real^n,\abs{\cdot},\gamma^n_\rho) . 
\]
\end{thm}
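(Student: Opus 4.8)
The plan is to exhibit a $1$-Lipschitz map transporting $\gamma^n_\rho$ onto $\mu$, and then invoke the general Contraction Principle for eigenvalues on metric-measure spaces (the main tool announced in the abstract and developed, presumably, in the section referenced as \texttt{subsec:manifold-contraction}). Concretely: if $T : (\Real^n,\abs{\cdot},\gamma^n_\rho) \to (\Real^n,\abs{\cdot},\mu)$ pushes $\gamma^n_\rho$ forward to $\mu$ and is $1$-Lipschitz, then pulling back test functions via $T$ shows that the Rayleigh quotients on the source space dominate those on the target, and hence $\lambda_k(\Real^n,\abs{\cdot},\mu) \leq \lambda_k(\Real^n,\abs{\cdot},\gamma^n_\rho)$ for every $k$ by the min-max (Courant--Fischer) characterization. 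So the whole theorem reduces to constructing such a contraction $T$.

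The construction I would use is the Kantorovich/Brenier optimal transport map $T = \nabla \varphi$ from $\gamma^n_\rho$ to $\mu$, where $\varphi$ is convex; the claim is that $T$ is $1$-Lipschitz precisely because $\mu$ is "more log-concave" than $\gamma^n_\rho$ in the sense $\nabla^2 V \le \rho\, Id$. This is exactly the dual regime to Caffarelli's Contraction Theorem (which handles $\nabla^2 V \ge \rho\, Id$, giving a contraction \emph{into} the Gaussian): here the curvature inequality points the other way, so the Brenier potential $\varphi$ should be seen to satisfy $\nabla^2 \varphi \preceq Id$, i.e. $\mathrm{Id} - \nabla^2\varphi \succeq 0$. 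The Monge--Amp\`ere equation for $T$ reads, after taking logarithms,
\[
\rho \frac{\abs{x}^2}{2} - \log c_\rho = V(\nabla\varphi(x)) + \log\det \nabla^2\varphi(x),
\]
and the standard Caffarelli-style argument — differentiate twice, use the concavity of $\log\det$ on positive matrices, and feed in the one-sided Hessian bound on $V$ — should yield the operator inequality $\nabla^2\varphi \le Id$ at any interior maximum of the largest eigenvalue of $\nabla^2\varphi$, hence globally. This is the step I expect to be the main obstacle: making the maximum-principle argument rigorous requires either interior regularity of the Brenier map (Caffarelli's regularity theory, which needs some control on $V$, e.g. local boundedness away from $0$ and $\infty$, or at least a suitable approximation scheme) or an approximation argument reducing to smooth, compactly supported, uniformly log-concave perturbations of $\mu$ and passing to the limit. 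One must also be slightly careful that $\gamma^n_\rho$ and $\mu$ are both probability measures with $\gamma^n_\rho$ nicely log-concave — which is given — so that the transport problem is well posed and Brenier's theorem applies.

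Once the $1$-Lipschitz contraction $T$ is in hand, the conclusion is immediate from the Contraction Principle: for any $(k-1)$-dimensional subspace $F$ of test functions on the target, $T^* F := \set{f \circ T : f \in F}$ is a subspace of the same dimension of test functions on the source (injectivity of $T^*$ on the relevant subspace follows since $T_\#\gamma^n_\rho = \mu$), and for each $f$,
\[
\int_{\Real^n} \abs{\nabla (f\circ T)}^2 \, d\gamma^n_\rho \le \int_{\Real^n} \abs{(\nabla f)\circ T}^2 \, d\gamma^n_\rho = \int_{\Real^n} \abs{\nabla f}^2 \, d\mu,
\qquad
\int_{\Real^n} \abs{f\circ T}^2 \, d\gamma^n_\rho = \int_{\Real^n} \abs{f}^2 \, d\mu,
\]
using $1$-Lipschitzness for the gradient term and the pushforward property for the $L^2$ norms. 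Hence the supremum over $f$ in the unit sphere of $F$ of the source Rayleigh quotient is at most that over the target, and taking the infimum over all such $F$ gives $\lambda_k(\Real^n,\abs{\cdot},\gamma^n_\rho) \ge \lambda_k(\Real^n,\abs{\cdot},\mu)$, which is the assertion. I would remark that the same scheme, with the roles of source and target exchanged and Caffarelli's original contraction theorem in place of the dual version, underlies the proof of Theorem 1.1.
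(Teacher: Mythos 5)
Your overall strategy---produce a Lipschitz transport map and feed it to the Contraction Principle---is indeed the paper's, but you have the direction of the map backwards, and this is not a cosmetic issue. The Contraction Principle states that if $T$ is a contraction from $(M_1,g_1,\mu_1)$ to $(M_2,g_2,\mu_2)$ pushing $\mu_1$ forward onto $\mu_2$, then $\lambda_k(M_2,g_2,\mu_2) \geq \lambda_k(M_1,g_1,\mu_1)$: pulling back a $k$-dimensional subspace $F$ of test functions on the \emph{target} produces a $k$-dimensional subspace on the \emph{source} whose Rayleigh quotients are no larger, so the infimum defining $\lambda_k$ on the source is taken over a richer (better) collection, giving $\lambda_k(\text{source}) \leq \lambda_k(\text{target})$. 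Your final display is correct line by line, but the conclusion you draw from it, $\lambda_k(\gamma^n_\rho) \geq \lambda_k(\mu)$, is the reverse of what the computation yields for a map $T:\gamma^n_\rho \to \mu$. To prove $\lambda_k(\mu) \leq \lambda_k(\gamma^n_\rho)$ you therefore need a contraction going the other way: from $(\Real^n,\abs{\cdot},\mu)$ onto $(\Real^n,\abs{\cdot},\gamma^n_\rho)$, pushing $\mu$ forward onto $\gamma^n_\rho$.

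Compounding this, the map you propose to build does not exist under the stated hypothesis: the Brenier map from $\gamma^n_\rho$ to a measure $\mu$ with $\nabla^2 V \leq \rho\,Id$ is in general \emph{expanding}, not contracting. Take $\mu = \gamma^n_{\rho'}$ with $0 < \rho' < \rho$; then $\nabla^2 V = \rho' Id \leq \rho\,Id$, yet the optimal map is the dilation $x \mapsto \sqrt{\rho/\rho'}\,x$, with Lipschitz constant $\sqrt{\rho/\rho'} > 1$, so $\nabla^2\varphi = \sqrt{\rho/\rho'}\,Id \not\leq Id$. What the one-sided bound $\nabla^2 V \leq \rho\,Id$ actually buys, via exactly the Caffarelli maximum-principle argument you sketch, is a \emph{lower} bound $\nabla^2\varphi \geq Id$, equivalently that the \emph{inverse} Brenier map---the one transporting $\mu$ onto $\gamma^n_\rho$---is a contraction. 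This is Kolesnikov's observation (a contraction between $e^{-V}dx$ and $e^{-W}dx$ whenever $\nabla^2 V \leq \rho\,Id \leq \nabla^2 W$), which is precisely what the paper invokes; combined with the correctly oriented Contraction Principle it gives $\lambda_k(\gamma^n_\rho) \geq \lambda_k(\mu)$ as required. Your closing remark about Theorem \ref{thm:spectrum} reflects the same confusion: there the contraction runs from $\gamma^n_\rho$ onto $\mu$ (Caffarelli's original theorem, under $\nabla^2 V \geq \rho\,Id$), yielding the lower bound $\lambda_k(\mu) \geq \lambda_k(\gamma^n_\rho)$.
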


\begin{thm} \label{thm:KM}
Assume $p \in [1,2]$, and set $\nu^{n}_p := c_p^n \exp(-\sum_{i=1}^n \frac{1}{p} \abs{x_i}^p ) dx$ (with $c_p > 0$ chosen so that $\nu^n_p$ is a probability measure) . Let $\mu = \exp(-U) \nu^n_p$ denote a second probability measure on $\Real^n$, and assume that $U : \Real^n \rightarrow \Real$ is convex and unconditional, meaning that $U(\pm x_1, \ldots, \pm x_n) = U(x)$. Then:
\[
\forall k \geq 1 \;\;\; \lambda_k(\Real^n,\abs{\cdot},\mu) \geq \lambda_k(\Real^n,\abs{\cdot},\nu^n_p) . 
\]
\end{thm}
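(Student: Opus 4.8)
The plan is to exhibit a distance-non-expanding (1-Lipschitz) map $T : (\Real^n,\abs{\cdot}) \to (\Real^n,\abs{\cdot})$ pushing $\nu^n_p$ forward to $\mu$, and then invoke the general Contraction Principle for eigenvalues announced in the abstract: if there is a 1-Lipschitz $T$ with $T_*(\nu^n_p) = \mu$, then $\lambda_k(\Real^n,\abs{\cdot},\mu) \geq \lambda_k(\Real^n,\abs{\cdot},\nu^n_p)$ for all $k$ (the inequality goes this way because pulling back test functions through a contraction can only shrink Dirichlet energies relative to the $L^2$-norm, and the min-max characterization of $\lambda_k$ then gives the stated bound — this is exactly the ``eigenvalue contraction'' mechanism used for Theorem \ref{thm:spectrum} and Theorem \ref{thm:Kolesnikov}). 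So the entire content is to produce the contraction.

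For this I would use the Kim--Milman (``moment'' / heat-flow) transport map rather than the Brenier map, since the source measure $\nu^n_p$ is only $p$-exponential and need not be log-concave in the strong sense required by Caffarelli's theorem; the relevant statement is that if $d\mu = e^{-U} d\nu$ with $\nu$ a ``nice'' reference measure and $U$ convex, then the Kim--Milman map $T$ from $\nu$ to $\mu$ is a contraction. Concretely: first treat the one-dimensional tensor factor. For $p\in[1,2]$, the density $e^{-\frac1p|x|^p}$ is log-concave on $\Real$, and I would check that the one-dimensional map from $c_p e^{-\frac1p|x|^p}dx$ to any measure of the form $e^{-u(x)} c_p e^{-\frac1p|x|^p}dx$ with $u$ convex and even is monotone and 1-Lipschitz — this is a classical fact about monotone rearrangement between log-concave measures on the line when the target has the ``more log-concave'' density, and convexity of $u$ ensures the target density decays at least as fast. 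The unconditionality of $U$ is what lets me reduce to, or at least align with, the coordinatewise picture.

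The substantive step is the $n$-dimensional case, where $U$ is convex and unconditional but need not split as a sum. Here I would invoke the result of Kolesnikov and Milman (cf. the reference \cite{KolesnikovEMilman-Reilly} in the excerpt, and the companion work on transportation to unconditional measures) that the Kim--Milman transport map from the product measure $\nu^n_p$ to $\mu = e^{-U}\nu^n_p$ is a contraction whenever $U$ is convex and unconditional and the reference measure is a product of symmetric log-concave measures with the $p$-exponential tail structure, $p\in[1,2]$. The unconditionality is essential: it guarantees the transport map preserves the coordinate hyperplanes and sign-symmetries, which is what rescues the contraction property in the absence of uniform convexity of the reference potential. Once that map $T$ is in hand, apply the Contraction Principle to conclude.

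The main obstacle, and where care is genuinely needed, is verifying the contraction property of the transport map in the multivariate unconditional setting: Caffarelli's theorem as usually stated needs the source to be $\rho$-log-concave with a matching target, and $\nu^n_p$ fails this for $p<2$, so one must instead rely on the heat-flow/Kim--Milman construction together with the unconditional symmetry — establishing (or citing precisely) that combination is the crux. The one-dimensional reduction, the tensorization bookkeeping, and the final min-max argument are all routine by comparison.
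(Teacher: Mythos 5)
Your proposal matches the paper's proof: Theorem \ref{thm:KM} is obtained precisely by combining the Contraction Principle with the generalized Caffarelli theorem of Kim and the author \cite{KimEMilmanGeneralizedCaffarelli}, whose simplest case states that under the hypotheses of Theorem \ref{thm:KM} (convex unconditional $U$, $p$-exponential product reference with $p\in[1,2]$) the associated heat-flow map is a contraction pushing $\nu^n_p$ forward onto $\mu$. The only slip is attributional --- the contraction result you need is \cite[Theorem 1.1]{KimEMilmanGeneralizedCaffarelli} rather than \cite{KolesnikovEMilman-Reilly} --- and the one-dimensional warm-up is not needed, since the cited theorem handles the multivariate unconditional case directly.
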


\begin{thm} \label{thm:Latala} 
Assume $p \in [2,\infty]$, and let $\tilde{B}_p^n$ denote the unit ball of $\ell_p^n$, rescaled to have volume 1; the uniform measure on $\tilde{B}_p^n$ is denoted by $\lambdanu_{\tilde{B}_p^n}$. Then:
\[
\forall k \geq 1 \;\;\; \lambda_k(\Real^n,\abs{\cdot},\lambdanu_{\tilde{B}_p^n}) \geq \frac{1}{392} \lambda_k(\Real^n,\abs{\cdot},\gamma^{n}) . 
\]
Here $\lambda_k(\Real^n,\abs{\cdot},\lambdanu_{\tilde{B}_p^n})$ denote the eigenvalues of $-\Delta$ on $\tilde{B}_p^n$ with vanishing Neumann boundary conditions. 
\end{thm}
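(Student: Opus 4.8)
The plan is to realize $\lambda_{\tilde B_p^n}$ as a $1$-Lipschitz pushforward image (up to a universal constant) of a positively curved reference measure, and then invoke the Contraction Principle underlying the proof of Theorem \ref{thm:spectrum}. More precisely, I would first reduce to $p = \infty$, i.e.\ to the unit cube, by a separate (easier) argument, since for $p \in [2,\infty)$ the ball $\tilde B_p^n$ is a product-type body that interpolates between the Euclidean ball and the cube; the constant $\frac{1}{392}$ suggests the author is content with a crude but universal bound, so I would not try to optimize. The core step is: for $p \in [2,\infty]$, the uniform measure $\lambda_{\tilde B_p^n}$ is the image, under a map with Lipschitz constant bounded by a universal constant $L$, of the measure $\nu_p^n$ (or of $\gamma^n$ after a further contraction from $\gamma^n$ to $\nu_p^n$, using that $\nu_p^n$ for $p \le 2$ is a Lipschitz image of Gaussian and for $p \ge 2$ one goes the other direction). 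Here one uses the Bobkov--Nazarov type result that the uniform measure on the normalized $\ell_p^n$ ball has a $1$-Lipschitz (in fact coordinate-wise) relationship with the product measure $\nu_p^n$, together with the fact that for $p \ge 2$, $\nu_p^n$ is more log-concave than Gaussian in the sense needed.

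Granting such a contraction, I would then apply the general eigenvalue Contraction Principle: if $T : (X,d_X,\nu) \to (Y,d_Y,m)$ pushes $\nu$ forward to $m$ and is $L$-Lipschitz, then $\lambda_k(Y,d_Y,m) \ge \frac{1}{L^2} \lambda_k(X,d_X,\nu)$ for all $k$. Combined with the already-established comparison $\lambda_k(\Real^n,\abs{\cdot},\nu_p^n) \ge \lambda_k(\Real^n,\abs{\cdot},\gamma^n)$ for $p \in [2,\infty]$ — which itself follows from a contraction from $\gamma^n$ to $\nu_p^n$, or can be extracted from Theorem \ref{thm:Kolesnikov}/\ref{thm:spectrum}-type reasoning applied to the one-dimensional factors and tensorized — this yields $\lambda_k(\Real^n,\abs{\cdot},\lambda_{\tilde B_p^n}) \ge \frac{1}{L^2}\lambda_k(\Real^n,\abs{\cdot},\gamma^n)$, and one tracks the numerical value of $L^2$ through the chain of contractions to arrive at the stated $\tfrac{1}{392}$. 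The tensorization of the spectrum is what forces the comparison to be against the $n$-dimensional Gaussian of matching dimension, exactly as in the discussion preceding Question \ref{ques:spectrum}.

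The main obstacle, I expect, is constructing the Lipschitz contraction onto $\lambda_{\tilde B_p^n}$ with an explicit, dimension-free Lipschitz constant. The normalized ball $\tilde B_p^n$ has inradius and "width" of order a universal constant (after the volume-$1$ normalization, since $\vol(B_p^n)^{1/n} \asymp n^{-1/p}$ and the relevant scaling makes the body roughly isotropic up to constants for $p \ge 2$), so one needs the Knothe--Rosenblatt or optimal-transport map from $\nu_p^n$ to $\lambda_{\tilde B_p^n}$, or a more hands-on radial/coordinate-wise construction, and must bound its derivative uniformly in $n$. This is where known results on the concentration and log-concavity of uniform measures on $\ell_p^n$ balls (Schechtman--Zinn, Barthe--Guédon--Mendelson--Naor, Bobkov--Nazarov) enter; the delicate point is that uniform measures on convex bodies are only log-concave, not strongly log-concave, so the contraction cannot come from Caffarelli's theorem directly and must instead use the special product-like geometry of $\ell_p^n$ for $p \ge 2$. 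Once the contraction and its constant are in hand, the rest is a mechanical application of the Contraction Principle and bookkeeping of constants.
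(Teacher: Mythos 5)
Your overall strategy is the paper's: build a Lipschitz map pushing $\gamma^n$ forward onto $\lambda_{\tilde B_p^n}$ as the composition of a coordinate-wise (product) map $\gamma^n \to \nu^n_p$ with a radial map $\nu^n_p \to \lambda_{\tilde B_p^n}$, and then feed its Lipschitz constant into the Contraction Principle. But as written there is a genuine gap: the entire content of the theorem is the \emph{existence} of such a map with an explicit, dimension-free Lipschitz constant, and you leave exactly this step unresolved (``the main obstacle, I expect, is constructing the Lipschitz contraction\ldots''). The paper does not construct it either --- it cites Lata{\l}a--Wojtaszczyk \cite[Proposition 5.21]{LatalaJacobInfConvolution}, who prove that for $p\in[2,\infty]$ this composed map is globally Lipschitz with constant at most $14\sqrt 2$; squaring gives the $\tfrac{1}{392}$. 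Without that input (or an equivalent of it), your argument does not close, and your proposed detour of ``first reducing to $p=\infty$'' is neither needed nor justified: there is no obvious way to transfer the cube case to general $p\ge 2$, and the Lata{\l}a--Wojtaszczyk bound handles all $p\in[2,\infty]$ at once. Your attribution to Bobkov--Nazarov is also off target; their comparison results are of a different nature and do not directly supply the Lipschitz transport map with a numerical constant.

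Two further points you omit. First, the eigenvalues of $\tilde B_p^n$ are Neumann eigenvalues of a manifold with boundary, so one must invoke the Contraction Principle With Boundary (Theorem \ref{thm:contract-boundary}) rather than the boundaryless version, and since $\partial \tilde B_p^n$ is only Lipschitz (not smooth), one has to interpret the Neumann Laplacian via the quadratic form $\int_{\tilde B_p^n}\abs{\nabla f}^2\,d\lambda_{\tilde B_p^n}$ on $W^{1,2}$ so that the analogue of Proposition \ref{prop:same-eigen-boundary} applies. Second, there is no need for a separate comparison $\lambda_k(\nu^n_p)\ge\lambda_k(\gamma^n)$ as an intermediate step: the Lipschitz constant of the full composition $\gamma^n\to\lambda_{\tilde B_p^n}$ is what is bounded by $14\sqrt2$, so a single application of the Contraction Principle suffices.
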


\begin{thm} \label{thm:Bobkov}
Given a weighted-manifold $(\Real,\abs{\cdot},\mu)$ with $\mu$ a probability measure, denote its density by $f_\mu$,  by $F_\mu(x) = \mu((-\infty,x])$ its cumulative distribution function, and set $\I^\flat_\mu := f_\mu \circ F_\mu^{-1} : [0,1] \rightarrow \Real_+$ (its ``one-sided flat isoperimetric profile"). Let $\mu_1,\mu_2$ denote two such measures. Then:
\[
\I^\flat_{\mu_2} \geq \frac{1}{L} \I^\flat_{\mu_1}   \text{ on $[0,1]$} \;\;\; \Rightarrow \;\;\; \forall k \geq 1 \;\;\; \lambda_k(\Real,\abs{\cdot},\mu_2) \geq \frac{1}{L^2}  \lambda_k(\Real,\abs{\cdot},\mu_1)  .
\]
\end{thm}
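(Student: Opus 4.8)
The plan is to reduce the statement to the general Contraction Principle by producing an explicit $L$-Lipschitz map from $(\Real,\abs{\cdot},\mu_1)$ onto $(\Real,\abs{\cdot},\mu_2)$ that pushes $\mu_1$ forward to $\mu_2$. Since $\mu_1,\mu_2$ are weighted-manifold measures on $\Real$, their densities $f_{\mu_1},f_{\mu_2}$ are smooth and strictly positive, so the cumulative distribution functions $F_{\mu_1},F_{\mu_2}$ are smooth strictly increasing bijections of $\Real$ onto $(0,1)$. I would then set
\[
T := F_{\mu_2}^{-1}\circ F_{\mu_1} : \Real \longrightarrow \Real ,
\]
the monotone transport (``increasing rearrangement'') map; it is a smooth increasing bijection and, by construction, $T_\ast \mu_1 = \mu_2$.

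Next I would bound the Lipschitz constant of $T$. Writing $t := F_{\mu_1}(x)\in(0,1)$, so that $F_{\mu_1}^{-1}(t)=x$ and $F_{\mu_2}^{-1}(t)=T(x)$, the chain rule gives
\[
T'(x) \;=\; \frac{f_{\mu_1}(x)}{f_{\mu_2}(T(x))} \;=\; \frac{f_{\mu_1}(F_{\mu_1}^{-1}(t))}{f_{\mu_2}(F_{\mu_2}^{-1}(t))} \;=\; \frac{\I^\flat_{\mu_1}(t)}{\I^\flat_{\mu_2}(t)} \;\leq\; L ,
\]
the last inequality being exactly the hypothesis $\I^\flat_{\mu_2}\geq \tfrac1L\I^\flat_{\mu_1}$; note that this bound is only needed on the open interval $(0,1)=F_{\mu_1}(\Real)$, where all the densities are strictly positive, so any degeneration of $\I^\flat_{\mu_i}$ at the endpoints $t\in\{0,1\}$ is irrelevant. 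Since also $T'>0$, the map $T$ is $L$-Lipschitz and carries $\Real$ onto $\Real=\mathrm{supp}(\mu_2)$.

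Finally I would invoke the Contraction Principle for eigenvalues established earlier in the paper: an $L$-Lipschitz surjection pushing $\mu_1$ forward to $\mu_2$ yields $\lambda_k(\Real,\abs{\cdot},\mu_2)\geq L^{-2}\,\lambda_k(\Real,\abs{\cdot},\mu_1)$ for every $k\geq1$, which is exactly the assertion. The underlying mechanism, in terms of the Courant--Fischer min-max characterization, is that pullback $g\mapsto g\circ T$ is a linear injection of $L^2(\mu_2)$ into $L^2(\mu_1)$ (injective because $T_\ast\mu_1=\mu_2$) that preserves $L^2$-norms and inflates the Dirichlet energy $\int\abs{\nabla g}^2 d\mu$ by a factor at most $L^2$, since $\abs{\nabla(g\circ T)}\leq L\,(\abs{\nabla g}\circ T)$ a.e.; feeding a near-optimal $k$-dimensional test-space for $\lambda_k(\mu_2)$ into the variational formula for $\lambda_k(\mu_1)$ then yields $\lambda_k(\mu_1)\leq L^2\lambda_k(\mu_2)$.

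I do not anticipate a genuine obstacle for this particular statement: the computation of $T'$ is a one-line exercise, and the only delicate points are the boundary behaviour at $t\in\{0,1\}$ (moot, as noted above) and the validity, in the possibly non-discrete-spectrum regime, of the min-max characterization underlying the Contraction Principle, where $\lambda_k$ is read off as the $k$-th value up to the bottom of the essential spectrum --- but this is precisely the generality in which that principle is already formulated. The remaining work is simply to verify that the transport map $T$ satisfies the exact hypotheses of the Contraction Principle (the push-forward identity, surjectivity onto the support, measurability, and the Lipschitz bound).
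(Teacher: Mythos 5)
Your proposal is correct and coincides with the paper's own argument: the proof there also takes the increasing monotone transport map $T=F_{\mu_2}^{-1}\circ F_{\mu_1}$, computes $T'(x)=\I^\flat_{\mu_1}(v)/\I^\flat_{\mu_2}(v)\leq L$, and concludes via the Contraction Principle (Theorem \ref{thm:intro-Lipschitz}). The only cosmetic difference is that the paper credits this Lipschitz bound for the monotone map to an observation of Bobkov and Houdr\'e.
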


Recall that a map $T : (M^{n_1}_1,g_1) \rightarrow (M^{n_2}_2,g_2)$ is called a Riemannian submersion if $T$ is smooth, surjective (so that $n_1 \geq n_2$), and at every point $x \in M_1$, the differential $d_x T$ is of maximal rank $n_2$ and an isometry on the orthogonal complement to its kernel.

\begin{thm} \label{thm:submersion}
Let $(M_i,g_i,\mu_i)$ denote two weighted-manifolds, and let $T : (M_1,g_1) \rightarrow (M_2,g_2)$ denote a Riemannian submersion pushing forward $\mu_1$ onto $\mu_2$ up to a finite constant. Then:
\[
\forall k \geq 1 \;\;\; \lambda_k(M_2,g_2,\mu_2) \geq \lambda_k(M_1,g_1,\mu_1) . 
\]
In particular, this holds for $\mu_i = \vol_{g_i}$, the corresponding Riemannian volume measures, if both manifolds are connected and the submersion's fibers are minimal and compact. \\
In particular, this holds for any finite-sheeted Riemannian covering map $T$ between two connected manifolds. 
\end{thm}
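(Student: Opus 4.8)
The plan is to realize Theorem~\ref{thm:submersion} as an instance of the Contraction Principle: a Riemannian submersion $T:(M_1,g_1)\to(M_2,g_2)$ is in particular $1$-Lipschitz, and pulling functions back along $T$ preserves $L^2$-norms (up to the push-forward constant) and Dirichlet energies, so min--max forces the eigenvalues of the base $M_2$ to dominate those of the total space $M_1$ --- i.e.\ $\lambda_k(M_1,g_1,\mu_1)\le\lambda_k(M_2,g_2,\mu_2)$ for every $k$. I would use the variational description of $\lambda_k$ valid even for non-discrete spectrum (Subsection~\ref{subsec:manifold-contraction}): with $\mathcal E_M(f)=\int_M\abs{\nabla_g f}^2d\mu$ the Dirichlet form (the closure of its restriction to $C_c^\infty(M)$, self-adjointly realized via essential self-adjointness of $-\Delta_{g,\mu}$),
\[
\lambda_k(M,g,\mu)=\inf\set{\,\sup_{0\ne f\in V}\mathcal E_M(f)/\snorm{f}_{L^2(\mu)}^2\ :\ V\subseteq\mathrm{Dom}(\mathcal E_M),\ \dim V=k\,}.
\]

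The crux is a pull-back lemma: if $0<c<\infty$ and $T_\#\mu_1=c\,\mu_2$, then $f\in\mathrm{Dom}(\mathcal E_{M_2})$ implies $f\circ T\in\mathrm{Dom}(\mathcal E_{M_1})$ with $\snorm{f\circ T}_{L^2(\mu_1)}^2=c\,\snorm{f}_{L^2(\mu_2)}^2$ and $\mathcal E_{M_1}(f\circ T)=c\,\mathcal E_{M_2}(f)$; in particular $T^*\colon f\mapsto f\circ T$ is injective and preserves Rayleigh quotients exactly. The $L^2$-identity is change of variables under the push-forward. For the energy identity, the defining property of a Riemannian submersion --- $dT$ restricts to a linear isometry on the horizontal distribution $(\ker dT)^\perp$ and kills the vertical one --- gives pointwise $\abs{\nabla_{g_1}(f\circ T)}_{g_1}=\abs{\nabla_{g_2}f}_{g_2}\circ T$ (the gradient of $f\circ T$ is horizontal and is carried isometrically to $\nabla_{g_2}f$ by $dT$), and integrating against $\mu_1$ and pushing forward gives the claim. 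The one technical subtlety is form-domain membership: $f\circ T$ need not be compactly supported (its support is a union of full fibers), so one uses the essential self-adjointness of $-\Delta_{g_1,\mu_1}$ on $C_c^\infty(M_1)$ (from completeness of $g_1$) to identify $\mathrm{Dom}(\mathcal E_{M_1})$ with $\set{h\in L^2(\mu_1):\nabla_{g_1}h\in L^2(\mu_1)}$; alternatively one approximates $f\circ T$ --- first for $f\in C_c^\infty(M_2)$, then for general $f$ by density --- by $\chi_R\cdot(f\circ T)\in C_c^\infty(M_1)$ using Lipschitz cut-offs $\chi_R$ of the complete metric $g_1$, and passes to the limit in the form norm.

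Granting the lemma, the comparison is a one-line min--max argument: given $k$ and $\eps>0$, choose a $k$-dimensional $V\subseteq\mathrm{Dom}(\mathcal E_{M_2})$ with $\sup_{0\ne f\in V}\mathcal E_{M_2}(f)/\snorm{f}_{L^2(\mu_2)}^2\le\lambda_k(M_2)+\eps$ (legitimate whether or not $\lambda_k(M_2)$ lies below the essential spectrum), and set $W:=T^*(V)$; then $\dim W=k$, $W\subseteq\mathrm{Dom}(\mathcal E_{M_1})$, and the Rayleigh-quotient identity gives $\sup_{0\ne h\in W}\mathcal E_{M_1}(h)/\snorm{h}_{L^2(\mu_1)}^2\le\lambda_k(M_2)+\eps$, whence $\lambda_k(M_1)\le\lambda_k(M_2)+\eps$; let $\eps\downarrow0$. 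For the two ``in particular'' statements one only checks the hypothesis $T_\#\mu_1=c\,\mu_2$: with $\mu_i=\vol_{g_i}$ and the fibers minimal and compact over the connected base $M_2$, the coarea formula for a Riemannian submersion (coarea Jacobian $\equiv1$, by the horizontal isometry) gives $T_\#\vol_{g_1}$ the density $b\mapsto\vol_{g_1}(T^{-1}(b))$ with respect to $\vol_{g_2}$, and the first-variation formula for the fiber volumes, with vanishing mean-curvature vector, makes this density locally constant, hence constant since $M_2$ is connected, so $c=\vol_{g_1}(T^{-1}(b))<\infty$; a finite-sheeted Riemannian covering is the case of $0$-dimensional (automatically minimal, compact) fibers, with $c$ the number of sheets. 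I expect the only real obstacle to be functional-analytic bookkeeping --- that $f\circ T$ genuinely lies in $\mathrm{Dom}(\mathcal E_{M_1})$ despite possibly non-compact support, and that the min--max expression is the correct reading of $\lambda_k$ in the non-discrete case --- the geometric inputs (horizontal isometry, unit coarea factor, first variation of volume) being entirely classical.
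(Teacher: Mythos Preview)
Your approach is correct and coincides with the paper's: a Riemannian submersion is a contraction, so the Contraction Principle (Theorem~\ref{thm:intro-Lipschitz}) applies directly; the paper simply invokes this as a black box, while you re-derive the min--max argument explicitly (noting the energy comparison is in fact an equality here) and handle the minimal-fiber case via coarea and first variation rather than citing Bordoni's lemma. One remark: the inequality you obtain, $\lambda_k(M_1)\le\lambda_k(M_2)$, is indeed what the Contraction Principle yields for a contraction $T:M_1\to M_2$ pushing $\mu_1$ forward onto $\mu_2$ --- the $\ge$ in the theorem's displayed formula is a typographical slip (as the $n$-sheeted Riemannian covering of $S^1$ readily confirms).
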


\noindent
We refer to Subsection \ref{subsec:submersions} for more background on Riemannian submersions and a slightly more general result. The ``in particular" parts of Theorem \ref{thm:submersion} are certainly not new (at least when the manifolds are compact, see e.g. \cite[Section 3]{Bordoni-HeatEstimatesForSubmersions}).

\subsection{Contracting and Lipschitz Maps}

Let $T : (\Omega_1,d_1,\mu_1) \rightarrow (\Omega_2,d_2,\mu_2)$ denote a Borel map between two metric-measure spaces. The map $T$ is said to push-forward the probability measure $\mu_1$ onto $\mu_2$, denoted $T_*(\mu_1) = \mu_2$, if $\mu_2 = \mu_1 \circ T^{-1}$. To treat the case when $\mu_i$ may have different or infinite total mass, we will say that $T$ pushes forward $\mu_1$ onto $\mu_2$ up to a finite constant, if there exists $c \in (0,\infty)$ so that $T$ pushes forward $\mu_1$ onto $c \mu_2$. The map $T$ is called $L$-Lipschitz ($L > 0$) if:
\[
d_2(T(x),T(y)) \leq L \; d_1(x,y) \;\;\; \forall x,y \in \Omega_1 .
\]
The map $T$ is called a contraction if it is Lipschitz with constant $L=1$. 

\medskip

All of our spectrum comparison theorems are consequences of the following:
\begin{thm}[Contraction Principle] \label{thm:intro-Lipschitz}
Let $T : (M_1,g_1,\mu_1) \rightarrow (M_2,g_2,\mu_2)$ denote an $L$-Lipschitz map between two (complete) weighted-manifolds pushing-forward $\mu_1$ onto $\mu_2$ up to a finite constant. Then:
\[
\lambda_k(M_2,g_2,\mu_2) \geq \frac{1}{L^2} \lambda_k(M_1,g_1,\mu_1) \;\;\; \forall k \geq 1. 
\]
In particular, if $\Delta_{g_1,\mu_1}$ has discrete spectrum, then so does $\Delta_{g_2,\mu_2}$.
\end{thm}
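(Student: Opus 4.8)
The plan is to combine the variational (min--max) description of the eigenvalues with a pullback of test functions through $T$. For a weighted-manifold $(M,g,\mu)$ write $\mathcal{R}_\mu(f) := \int_M \abs{\nabla_g f}^2 d\mu \,/\, \int_M f^2 d\mu$ for the Rayleigh quotient. Recall (Subsection~\ref{subsec:manifold-contraction}) that for a complete $(M,g,\mu)$ the Dirichlet form $\mathcal{E}_\mu(f) := \int_M \abs{\nabla_g f}^2 d\mu$, originally defined on $C_c^\infty(M)$, is closable with closure-domain the weighted Sobolev space $W^{1,2}(\mu) = \set{f\in L^2(\mu) : \abs{\nabla_g f}\in L^2(\mu)}$ --- a standard consequence of completeness, the same fact underlying the essential self-adjointness quoted from \cite[Proposition~3.2.1]{BGL-Book} --- and that
\[
\lambda_k(M,g,\mu) = \inf\set{\; \sup_{0\neq f\in S} \mathcal{R}_\mu(f) \;:\; S\subseteq W^{1,2}(\mu),\ \dim S = k\;} .
\]
By density of the form core $C_c^\infty(M)$ in $W^{1,2}(\mu)$ and continuity of $\mathcal{R}_\mu$ on finite-dimensional subspaces, the infimum may equivalently be taken over $k$-dimensional $S\subseteq C_c^\infty(M)$. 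Finally, multiplying $\mu_2$ by a positive constant changes neither $\Delta_{g_2,\mu_2}$ nor its form nor its spectrum, so we may assume from the start that $T_*(\mu_1)=\mu_2$.

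The core step is a gradient comparison. Fix $\phi\in C_c^\infty(M_2)$ and set $f := \phi\circ T$. Since $T$ is $L$-Lipschitz and $\phi$ is $C^1$, $f$ is globally Lipschitz; by Rademacher's theorem (in local charts, where $\vol_{g_1}$ and $\mu_1$ have the same null sets) $T$, hence $f$, are differentiable $\mu_1$-a.e., and at each such point the chain rule together with $\norm{dT_x}\leq L$ gives $\abs{\nabla_{g_1}f(x)}\leq L\,\abs{\nabla_{g_2}\phi(T(x))}$. Squaring, integrating and using $T_*(\mu_1)=\mu_2$ yields
\[
\int_{M_1}\abs{\nabla_{g_1}f}^2 d\mu_1 \;\leq\; L^2\int_{M_2}\abs{\nabla_{g_2}\phi}^2 d\mu_2 \;<\;\infty, \qquad \int_{M_1}f^2 d\mu_1 \;=\; \int_{M_2}\phi^2 d\mu_2,
\]
so $f\in W^{1,2}(\mu_1)$ and $\mathcal{R}_{\mu_1}(f)\leq L^2\,\mathcal{R}_{\mu_2}(\phi)$. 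To be sure $f$ genuinely lies in $Dom(\mathcal{E}_{\mu_1})$ with $\mathcal{E}_{\mu_1}(f)=\int_{M_1}\abs{\nabla_{g_1}f}^2 d\mu_1$, note that compactly supported Lipschitz functions belong to the form domain (mollification in charts), then cut $f$ off against $1$-Lipschitz functions $\chi_R$ equal to $1$ on balls $B(x_0,R)\subseteq M_1$ exhausting $M_1$ and pass to the limit by dominated convergence in $W^{1,2}(\mu_1)$ (here completeness of $(M_1,g_1)$ supplies the cutoffs). Finally $\phi\mapsto\phi\circ T$ is injective on $C_c^\infty(M_2)$: if $\phi\circ T=0$ $\mu_1$-a.e. then $\int_{M_2}\phi^2 d\mu_2 = \int_{M_1}(\phi\circ T)^2 d\mu_1 = 0$, so $\phi\equiv 0$ since $\mu_2$ has full support.

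Now assemble. Fix $k\geq 1$, $\varepsilon>0$, and pick a $k$-dimensional $S_2\subseteq C_c^\infty(M_2)$ with $\sup_{0\neq\phi\in S_2}\mathcal{R}_{\mu_2}(\phi)\leq\lambda_k(M_2,g_2,\mu_2)+\varepsilon$. Then $S_1:=\set{\phi\circ T : \phi\in S_2}$ is $k$-dimensional (injectivity) and contained in $Dom(\mathcal{E}_{\mu_1})$; since each element of $S_1$ equals $\psi\circ T$ for some $\psi\in S_2$, the gradient comparison gives $\mathcal{R}_{\mu_1}(f)\leq L^2(\lambda_k(M_2,g_2,\mu_2)+\varepsilon)$ for every $0\neq f\in S_1$. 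By the min--max formula $\lambda_k(M_1,g_1,\mu_1)\leq L^2(\lambda_k(M_2,g_2,\mu_2)+\varepsilon)$; letting $\varepsilon\downarrow 0$ gives $\lambda_k(M_2,g_2,\mu_2)\geq\frac{1}{L^2}\lambda_k(M_1,g_1,\mu_1)$. Discreteness of $\sigma(-\Delta_{g_1,\mu_1})$ means $\lambda_k(M_1,g_1,\mu_1)\to\infty$ as $k\to\infty$, which by this inequality forces $\lambda_k(M_2,g_2,\mu_2)\to\infty$, i.e. $\sigma(-\Delta_{g_2,\mu_2})$ is discrete too.

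The step I expect to demand the most care is the regularity bookkeeping above: verifying that $\phi\circ T$ lies in $Dom(\mathcal{E}_{\mu_1})$ with the expected form value. This rests on Rademacher differentiability of Lipschitz maps between manifolds, the a.e. chain-rule bound $\abs{\nabla_{g_1}(\phi\circ T)}\leq L\,\abs{\nabla_{g_2}\phi}\circ T$, the identification $Dom(\mathcal{E}_\mu)=W^{1,2}(\mu)$ on complete weighted-manifolds, and the cutoff argument handling the (possibly) non-compact support of $\phi\circ T$. Everything else is a formal consequence of the min--max principle together with the change-of-variables formula for pushforwards.
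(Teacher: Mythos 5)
Your proposal is correct and follows essentially the same route as the paper: a min--max characterization of $\lambda_k$ over Lipschitz/Sobolev test functions, pullback of a $k$-dimensional test subspace from $M_2$ to $M_1$ through $T$ (with injectivity of $\phi\mapsto\phi\circ T$ and the change of variables both supplied by the pushforward property), and the gradient comparison $\abs{\nabla(\phi\circ T)}\leq L\,\abs{\nabla\phi}\circ T$. The only cosmetic difference is that the paper phrases the pullback step abstractly for metric-measure spaces using the local Lipschitz constant $\abs{\nabla f}(x)=\limsup_{y\to x}\abs{f(y)-f(x)}/d(y,x)$, which makes the gradient comparison definitional and defers all the regularity bookkeeping (cutoffs from completeness, mollification, $W^{1,2}_{Lip}\subset W^{1,2}$) to a separate identification of the metric eigenvalues with the spectral ones, whereas you obtain the same comparison via Rademacher differentiability and the a.e.\ chain rule directly on the manifolds.
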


In fact, an analogous result holds for compact weighted-manifolds with either Dirichlet or Neumann boundary conditions, see Subsection \ref{subsec:boundary}. 
We note that even in the classical non-weighted setting, the contraction principle is easily seen to be completely false if we omit the assumption that $T$ pushes forward the first volume measure onto the second (up to a finite constant);
moreover, in that case, even if $T$ is known to be bi-Lipschitz, the resulting spectrum comparison would depend exponentially on the underlying dimension $n$, which is often useless for applications.

\smallskip
While the derivation of Theorem \ref{thm:intro-Lipschitz} is straightforward, we have not encountered an application of contracting maps for spectrum comparison elsewhere. Using a standard argument on the density of locally Lipschitz functions in $W^{1,2}(M,g,\mu)$, the statement of Theorem \ref{thm:intro-Lipschitz} is seen in Section \ref{sec:contractions} to be a particular case of an analogous statement on ``metric eigenvalue" comparison, which in fact holds true in a general metric-measure space setting - this is the content of Proposition \ref{prop:metric-contract}. Working with locally Lipschitz functions $f$ on $(M_2,g_2)$ is particularly convenient, as it decouples the discussion regarding domains of (essential) self-adjointness of $\Delta_{g,\mu}$, from the metric argument underlying the proof of the Contraction Principle:
\[
\frac{\int \abs{\nabla (f \circ T)}^2 d\mu_1}{\int \abs{f\circ T}^2 d\mu_1} \leq L^2 
\frac{\int \abs{\nabla f}^2 d\mu_2}{\int \abs{f}^2 d\mu_2} \; ;
\]
here $\abs{\nabla h}$ denotes the Riemannian length of the gradient of $h$ on the corresponding manifold (and the local Lipschitz constant of $h$ in a general metric setting). 
The proof of Theorem \ref{thm:intro-Lipschitz} (and of Proposition \ref{prop:metric-contract}) then follows by a careful application of the min-max principle. Two slightly delicate points here are that we do not assume injectivity of $T$ (which is useful for some of the applications above), and in the general metric setting, that we do not assume $\mu_i$ have full supports, and so the min-max principle should be carefully checked.  

\smallskip
Finally, we mention that the statement of Theorem \ref{thm:intro-Lipschitz} may be equivalently rewritten on $Dom(\Delta_{g_2,\mu_2}) \cap T_*(Dom(\Delta_{g_1,\mu_1}))$ as:
\begin{equation} \label{eq:op-comparison}
-\Delta_{g_2,\mu_2} \geq \frac{1}{L^2} (T^*)^t \circ (-\Delta_{g_1,\mu_1}) \circ T^* , 
\end{equation}
where $T_*$ and $T^*$ denote the push-forward and pull-back maps between $L^2(M_1,\mu_1)$ and $L^2(M_2,\mu_2)$ induced by $T$. To better appreciate the above stated comparison, the reader may wish to try and explicitly write out and compare the differential operators appearing in (\ref{eq:op-comparison}) using the change-of-variables formula relating $\mu_1$, $\mu_2$ and $\text{det}(dT)$ - this is quite a tedious task, which does not give any insight towards why (\ref{eq:op-comparison}) holds true.

\medskip

A few words are in order regarding previous approaches towards spectrum comparison between differential operators on Riemannian manifolds (and more generally, linear operators on Hilbert spaces). The closest general argument we have found in the literature is the so-called Kato's inequality and its generalizations (see \cite{HSU-DominationOfSemigroupsViaKato, Simon-KatoInequality, Besson-KatoTypeInqForTotallyGeodesicSubmersions, 
Besson-BeurlingDenyPrinciple, Berard-LecturesOnSpectralGeometry, Bordoni-SpectralSurvey,Bordoni-SpectralEstimatesViaKato,Bordoni-HeatEstimatesForSubmersions} and the references therein), which under certain conditions permit comparing the trace of the associated heat semi-groups, heat-kernels, and even the heat semi-group and resolvent operators themselves in the sense of domination of positivity preserving operators. However, these results typically do not involve the individual eigenvalues (cf. \cite[III.6]{Besson-BeurlingDenyPrinciple}), and in the few cases that do, the conclusion is in the opposite direction to the one appearing in this work (in an attempt to obtain spectral lower bounds on the source manifold by mapping it onto a simpler one). 
We also mention two additional classical methods of obtaining estimates on the growth and number of negative eigenvalues of a Schr\"{o}dinger operator -- the Lieb--Thirring and Cwikel--Lieb--Rozenblum inequalities \cite{LiebThirring,Cwikel-CLR,Lieb-CLR-Announced,Rozenblum-CLR} (see also \cite{Laptev-LiebThirringSurvey} and the references therein), the latter of which we will 
in fact employ in this work as well (see Subsection \ref{subsec:CLR}). 

\medskip

Back to the Contraction Principle. 
A celebrated contraction property was discovered by L. Caffarelli in \cite{CaffarelliContraction}:
\begin{thm*}[Caffarelli's Contraction Theorem]
Let $(\Real^n,\abs{\cdot},\mu)$ satisfy $\CD(\rho,\infty)$ with $\rho > 0$. Then there exists a map $T : (\Real^n,\abs{\cdot}) \rightarrow (\Real^n,\abs{\cdot})$ pushing forward $\gamma^n_\rho$ onto $\mu$ up to a finite constant which contracts Euclidean distance.
\end{thm*}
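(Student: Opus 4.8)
The plan is to follow the optimal-transport approach of \cite{CaffarelliContraction}. Since $(\Real^n,\abs{\cdot})$ has vanishing Ricci curvature, the hypothesis $\CD(\rho,\infty)$ means precisely that $\mu = \exp(-V)\,dx$ with $\nabla^2 V \geq \rho\, Id$; as strong log-concavity forces $\mu$ to have finite mass, we may after normalization assume $\mu$ is a probability measure and seek a contraction $T$ with $T_*\gamma^n_\rho = \mu$. The linear scaling $x \mapsto \sqrt{\rho}\,x$ sends $\gamma^n_\rho$ to $\gamma^n$ and sends $\mu$ to a probability measure $\exp(-\tilde V)\,dx$ with $\nabla^2\tilde V \geq Id$, and conjugating by this scaling turns a contraction between the rescaled spaces into one between the original spaces; so I would reduce to the case $\rho = 1$, $\nabla^2 V \geq Id$. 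Let then $T = \nabla\varphi$ be the Brenier map — the $\gamma^n$-a.e.\ unique gradient of a convex function with $(\nabla\varphi)_*\gamma^n = \mu$. Granting sufficient regularity for the moment, $\varphi$ is smooth with $A := \nabla^2\varphi > 0$, and the change-of-variables identity $c_1\exp(-\abs{x}^2/2) = c_2 \exp(-V(\nabla\varphi(x)))\det A(x)$ yields, upon taking logarithms,
\[
\tfrac12 \abs{x}^2 = V(\nabla\varphi(x)) - \log\det A(x) + c .
\]

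I would then run the Bernstein-type estimate: differentiate this identity twice along a fixed unit vector $e$. Using $\partial_e \nabla\varphi = Ae$ and $(\partial_e A)e = \nabla w$ for $w := \partial_{ee}\varphi = \scalar{Ae,e} > 0$, and denoting by $\mathcal{L} := \sum_{i,j}(A^{-1})_{ij}\,\partial_{ij}$ the linearized Monge--Amp\`ere operator (uniformly elliptic on compact subsets), a direct computation gives
\[
\mathcal{L}w - \scalar{\nabla V(\nabla\varphi),\nabla w} = \scalar{\nabla^2 V(\nabla\varphi)\,Ae,\,Ae} + \snorm{A^{-1/2}(\partial_e A)A^{-1/2}}_{HS}^2 - 1 .
\]
Since $\nabla^2 V \geq Id$ one has $\scalar{\nabla^2 V(\nabla\varphi)Ae,Ae} \geq \abs{Ae}^2 \geq \scalar{Ae,e}^2 = w^2$, while the Hilbert--Schmidt term is nonnegative; discarding it leaves
\[
\mathcal{L}w - \scalar{\nabla V(\nabla\varphi),\nabla w} \geq w^2 - 1 .
\]
At any interior local maximum of $w$ one has $\nabla w = 0$ and $\mathcal{L}w \leq 0$ (since $A^{-1} > 0$ and $\nabla^2 w \leq 0$ there), which forces $w \leq 1$; as $e$ was arbitrary this gives $\nabla^2\varphi \leq Id$ pointwise, i.e.\ $T = \nabla\varphi$ is $1$-Lipschitz.

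The main obstacle is that the transport problem lives on the non-compact space $\Real^n$, so $w$ need not attain a maximum and Caffarelli's Monge--Amp\`ere regularity theory does not apply directly at this level of generality; this is the genuine technical core, and I would handle it by approximation. One replaces $\mu$ by a family $\mu_R$ of strongly log-concave probability measures supported on bounded convex bodies — for instance suitably renormalized restrictions of $\mu$, possibly further damped by a narrow Gaussian — converging weakly to $\mu$; on such targets Caffarelli's interior and boundary regularity applies, so the Brenier potential $\varphi_R$ is smooth up to the boundary. The maximum of $w_R$ over the closure of the domain is then attained either in the interior, where the computation above gives $w_R \leq 1$, or on the boundary, where one uses that $\nabla\varphi_R$ maps $\Real^n$ onto the convex target body and hence its outer normal derivative has a sign forcing $w_R \leq 1$ as well (this boundary analysis is the delicate point in \cite{CaffarelliContraction}). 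Finally, stability of optimal maps under weak convergence of the marginals gives $\nabla\varphi_R \to \nabla\varphi$ (locally uniformly along a subsequence), so the uniform bound $\nabla^2\varphi_R \leq Id$ passes to the limit; undoing the initial rescaling then produces a contraction pushing $\gamma^n_\rho$ forward onto $\mu$ up to the normalizing constant, as desired.
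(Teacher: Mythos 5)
Your proposal is correct and follows exactly the route the paper itself takes, namely invoking Caffarelli's optimal-transport proof via the Brenier map: the paper does not reprove this theorem but cites \cite{CaffarelliContraction}, and your sketch — the Monge--Amp\`ere identity, the second-order differentiation giving $\mathcal{L}w - \scalar{\nabla V(\nabla\varphi),\nabla w} \geq w^2 - 1$, the maximum principle, and the approximation by compactly supported log-concave targets to justify regularity — is a faithful outline of that argument. The one place you (rightly) flag but do not carry out, the boundary analysis for the approximating problems, is indeed the delicate technical point of Caffarelli's paper, but as a reconstruction of the cited proof your account is accurate.
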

Together with the Contraction Principle, this immediately yields Theorem \ref{thm:spectrum}. 
Caffarelli proved the above result for the Brenier Optimal-Transport map $T$ \cite{VillaniTopicsInOptimalTransport}, which uniquely (up to a null-set deformation) minimizes the $L^2$-averaged transport distance $\int \abs{T(x) - x}^2 d\gamma^n_\rho(x)$ among all maps  pushing forward $\gamma^n_\rho$ onto $\mu$. Subsequently in \cite{KimEMilmanGeneralizedCaffarelli}, Young-Heon Kim and the author gave an alternative proof and extended Caffarelli's theorem using a (seemingly) different map $T$ involving a naturally associated heat-flow, which together with the Contraction Principle immediately yields Theorem \ref{thm:KM}. Similarly, the existence of contracting and Lipschitz maps due to Kolesnikov \cite{KolesnikovContractionSurvey}, Lata{\l}a--Wojtaszczyk \cite{LatalaJacobInfConvolution} and Bobkov--Houdr\'e \cite{BobkovHoudre} yield Theorems \ref{thm:Kolesnikov}, \ref{thm:Latala} and \ref{thm:Bobkov}, respectively; details are provided in Section \ref{sec:known-contractions}.

\medskip

Contracting, and more generally, Lipschitz maps between metric-measure spaces, constitute a very powerful tool for transferring isoperimetric, functional and concentration information from $(\Omega_1,d_1,\mu_1)$ to $(\Omega_2,d_2,\mu_2)$. However, for these traditional applications, there are numerous other tools available, such as $\Gamma_2$-Calculus, other parabolic and elliptic $L^2$-methods, Optimal-Transport, Localization, etc.. (see e.g. \cite{BGL-Book,KolesnikovEMilman-Reilly,KlartagLocalizationOnManifolds}). As shown in this work, contracting maps also yield sharp comparison estimates for the entire spectrum, going well beyond the capability of the above mentioned alternative methods - we believe this to be a noteworthy (albeit simple) observation. 

Motivated by Caffarelli's Contraction Theorem on one hand, and the well-known comparison results between weighted-manifolds satisfying $\CD(\rho,\infty)$  and the ($1$ or equivalently $n$-dimensional) Gaussian measure $\gamma_\rho$ ($\rho > 0$) on the other, we tentatively put forth the following conjecture, which by the Contraction Principle, would imply Conjecture \conjone:

\begin{conj2}[Contraction Conjecture for Positively Curved $(\Real^n,g,\mu)$] For any $(\Real^n,g,\mu)$ satisfying $\CD(\rho,\infty)$ with $\rho > 0$, there exists a map:
\[
T : (\Real^n,\abs{\cdot},\gamma^n_\rho) \rightarrow (\Real^n, g , \mu) ,
\]
pushing forward $\gamma^n_\rho$ onto $\mu$ up to a finite constant and contracting the corresponding metrics. 
\end{conj2}

See Section \ref{sec:conclude} for a more refined version of this tentative conjecture. 
Conjecture \conjtwo is consistent with the Bakry-Ledoux isoperimetric comparison theorem \cite{BakryLedoux} and the Bakry-\'Emery log-Sobolev inequality \cite{BakryEmery} for $\CD(\rho,\infty)$ weighted-manifolds. Note that we have restricted the above conjecture to manifolds diffeomorphic to $\Real^n$, as the counterexample of the canonical sphere from Subsection \ref{subsec:intro-positive} shows that one cannot hope for such a map unto a general weighted-manifold $(M^n,g,\mu)$ satisfying $\CD(\rho,\infty)$. Moreover, there are topological obstructions to the existence of such a map between $(\Real^n,\abs{\cdot})$ and $(M^n,g)$, at least if we assume in addition that $T$ is one-to-one from the source onto the target manifold: indeed, Brouwer's Invariance of Domain theorem \cite{Munkres-TopologyBook2ndEd} asserts that an injective, surjective and continuous map between two topological manifolds is in fact open, and hence the two manifolds must be homeomorphic. 

For a further discussion and refinement of Conjectures \conjone and \conjtwo, we refer to Section \ref{sec:conclude}.

\subsection{Extensions to Positively Curved Constant-Density Manifolds}

It is of course very natural to attempt extending the previous conjectures to the class of weighted-manifolds satisfying $\CD(\rho,N)$ for $\rho > 0$ and finite generalized dimension $N \in [n,\infty)$. Contrary to the situation with the usual functional inequalities (isoperimetric, Sobolev, spectral-gap, cf. \cite{BakryQianSharpSpectralGapEstimatesUnderCDD,EMilmanSharpIsopInqsForCDD}), it is not so clear  what would be the right (topologically $n$-dimensional) model space for comparing the entire spectrum. However, when $N=n$, which corresponds to the classical case of a complete connected Riemannian manifold, endowed with its canonical Riemannian volume measure $\vol_g$ and having Ricci curvature bounded below by $\rho > 0$ (times the metric), the natural model space is simply the canonical $n$-sphere $S^n$ with its metric $g^\rho_{can}$ rescaled to have $\Ric_{g^\rho_{can}} = \rho \; g^\rho_{can}$. For similar topological reasons as in the previous subsection (see also the ensuing discussion), we restrict to the case when $(M^n,g)$ is diffeomorphic to a sphere. 

\setcounter{conj}{2}
\begin{conj}[Spectral Comparison Conjecture For Positively Curved $(S^n,g,\vol_g)$] \label{conj:3}
For any $(S^n,g,\vol_g)$ satisfying $\Ric_{g} \geq \rho g$ with $\rho > 0$, we have:
\[
\forall k \geq 1 \;\;\; \lambda_k(S^n,g,\vol_g) \geq \lambda_k(S^n,g^\rho_{can},\vol_{g^\rho_{can}}) . 
\] 
\end{conj}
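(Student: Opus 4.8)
The natural strategy, fully in the spirit of the rest of this work, is to reduce Conjecture \ref{conj:3} to a contraction statement and then invoke the Contraction Principle (Theorem \ref{thm:intro-Lipschitz}). It suffices to construct a map
\[
T : (S^n, g^\rho_{can}) \rightarrow (S^n, g)
\]
pushing forward $\vol_{g^\rho_{can}}$ onto $\vol_g$ up to a finite constant and contracting the corresponding Riemannian distances; the desired inequality $\lambda_k(S^n,g,\vol_g) \geq \lambda_k(S^n,g^\rho_{can},\vol_{g^\rho_{can}})$ then follows immediately. This is the exact spherical counterpart of Conjecture \conjtwo, with the round sphere (normalized so that $\Ric_{g^\rho_{can}} = \rho\, g^\rho_{can}$) playing the role that Gaussian space plays in the Euclidean setting: since $(S^n,g)$ satisfies $\Ric_g \geq \rho\, g$, the target manifold is ``at least as positively curved'' as the model, precisely the configuration of Caffarelli's Contraction Theorem.

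For the base case $k=2$ this program already succeeds in classical guise: Lichnerowicz's bound gives $\lambda_2(S^n,g,\vol_g) \geq \rho \frac{n}{n-1} = \lambda_2(S^n,g^\rho_{can},\vol_{g^\rho_{can}})$, with equality characterized by Obata's rigidity theorem. The entire content of Conjecture \ref{conj:3} is to propagate this single comparison to all higher eigenvalues simultaneously, and the only mechanism we are aware of that achieves this in one stroke is a distance-contracting, volume-transporting map; order-by-order Obata-type arguments seem to stall almost immediately beyond $k=2$.

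To build $T$, the first candidate is the McCann optimal-transport map on the sphere for the quadratic cost $c(x,y) = \tfrac{1}{2} d_{g^\rho_{can}}(x,y)^2$ (or a related cost), which has the form $T(x) = \exp_x(\nabla \varphi(x))$ for a $c$-concave potential $\varphi$ solving a Monge--Amp\`ere-type equation relating $\vol_{g^\rho_{can}}$, $\vol_g$ and the Jacobian of $T$. One would then aim to establish a Caffarelli-type a priori estimate $\snorm{dT_x}_{\mathrm{op}} \leq 1$ pointwise, by running a maximum-principle argument on the largest eigenvalue of $dT$ and using the curvature gap $\Ric_g - \rho\, g \geq 0$ to close the inequality. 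A second candidate, modeled on the heat-flow construction of \cite{KimEMilmanGeneralizedCaffarelli}, is a heat-semigroup interpolation map on $(S^n,g^\rho_{can})$; on positively curved manifolds the heat flow is well controlled, and the relevant Bochner/$\Gamma_2$-type identities should again convert the curvature inequality into a contraction.

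The hard part — and the reason these remain conjectures — is exactly this second-order estimate on a \emph{curved} space. Unlike the Euclidean case, the Monge--Amp\`ere equation on $S^n$ couples the Hessian of $\varphi$ to the ambient curvature and to the geometry of the cut locus; global $C^2$ regularity of the transport map is itself a delicate matter governed by the Ma--Trudinger--Wang/Loeper curvature condition, and even granting regularity, the maximum-principle computation must absorb extra curvature terms with no analogue in Caffarelli's proof. One must also confront the topological hypothesis that $(S^n,g)$ is diffeomorphic to a sphere, which enters already in guaranteeing that a suitable (homeomorphic) transport map onto the target exists. If the contraction cannot be produced, the fallback is the weaker evidence assembled elsewhere in this paper — agreement of the Weyl asymptotics, non-sharp bounds via the Cwikel--Lieb--Rozenblum inequality, and heat-trace comparisons under hypercontractivity — which support Conjecture \ref{conj:3} without settling it.
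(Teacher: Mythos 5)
You have correctly identified that this statement is a conjecture, not a theorem: the paper offers no proof of Conjecture \ref{conj:3}, only partial evidence (Weyl asymptotics, the Cwikel--Lieb--Rozenblum bounds of Subsection \ref{subsec:CLR}, and the B\'erard--Gallot heat-trace comparison), and your proposed reduction --- construct a measure-transporting contraction $T : (S^n,g^\rho_{can},\vol_{g^\rho_{can}}) \to (S^n,g,\vol_g)$ and apply the Contraction Principle --- is exactly the paper's own Conjecture \ref{conj:4}. So your route coincides with the one the paper puts forward, and like the paper you do not close it: neither the optimal-transport candidate (with its Ma--Trudinger--Wang regularity issues and curvature terms in the maximum principle) nor the heat-flow candidate is actually shown to be a contraction. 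To be clear about status, then: this is not a proof, and no step of it is wrong --- the single genuine gap is the construction of the contracting map itself, which is precisely the open problem the paper states. Your fallback to the averaged and asymptotic evidence is the same fallback the paper takes.
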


\noindent
Conjecture \ref{conj:3} is consistent with:
\begin{itemize}
\item The Lichnerowicz spectral-gap estimate $\lambda_2(M^n,g,\vol_g) \geq \lambda_2(S^n,g^\rho_{can},\vol_{g^\rho_{can}}) = \rho \frac{n}{n-1}$ \cite{LichnerowiczBook}.
\item The B\'erard--Gallot estimate on the trace of the heat-kernel \cite{BerardGallot-HeatEquationComparison}:
\begin{equation} \label{eq:BG-trace}
\forall t > 0 \;\;\; \sum_{k=1}^\infty \exp(-t \lambda_k(S^n,g,\vol_g)) \leq \sum_{k=1}^\infty \exp(-t \lambda_k(S^n,g^\rho_{can},\vol_{g^\rho_{can}}) ) .
\end{equation}
\item It is immediate to show that it is compatible with Weyl's asymptotic law -- see Subsection \ref{subsec:Weyl}.
\item We can actually show that it holds true up to a dimension \emph{independent} multiplicative constant for $k \geq 4^n$ -- see Subsection \ref{subsec:CLR}.
\end{itemize}

Conjecture \ref{conj:3} would follow immediately from the Contraction Principle and the following previously unpublished conjecture of ours \cite{EMilman-GIF-2009}:

\begin{conj}[Contraction Conjecture for Positively Curved $(S^n,g,\vol_g)$] \label{conj:4}
For any $(S^n,g,\vol_g)$ satisfying $\Ric_{g} \geq \rho g$ with $\rho > 0$, there exists a map:
\[
T : (S^n,g^\rho_{can},\vol_{g^\rho_{can}}) \rightarrow (S^n,g,\vol_g) ,
\]
pushing forward $\vol_{g^\rho_{can}}$ onto $\vol_g$ up to a finite constant and contracting the corresponding metrics. 
\end{conj}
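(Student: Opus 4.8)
The plan is to follow the optimal-transport route behind Caffarelli's theorem \cite{CaffarelliContraction}, adapted to the round sphere. Fix the underlying smooth manifold $S^n$, let $c>0$ satisfy $c\,\vol_g(S^n) = \vol_{g^\rho_{can}}(S^n)$, and let $T$ be the unique map pushing $\vol_{g^\rho_{can}}$ forward onto $c\,\vol_g$ that is optimal for the cost $\tfrac12 d_{g^\rho_{can}}^2$; by Brenier--McCann theory $T(x) = \exp^{g^\rho_{can}}_x(\nabla^{g^\rho_{can}}\varphi(x))$ for a $c$-concave Kantorovich potential $\varphi$. Because the round-sphere squared-distance cost satisfies the Ma--Trudinger--Wang (MTW) condition (Loeper, Kim--McCann) and the target density $d(c\,\vol_g)/d\vol_{g^\rho_{can}}$ is smooth and bounded away from $0$ and $\infty$, the regularity theory for the second boundary value problem gives continuity of $T$ on all of $S^n$ and smoothness of $\varphi$ away from the cut locus --- enough to run a pointwise computation. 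The target is the \emph{infinitesimal} contraction $T^* g \leq g^\rho_{can}$ as quadratic forms; integrating it along minimizing $g^\rho_{can}$-geodesics (explicit on the round sphere) yields $d_g(T(x),T(y)) \leq d_{g^\rho_{can}}(x,y)$, which is the assertion of the conjecture.

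To obtain $T^*g \leq g^\rho_{can}$ I would write the Monge--Amp\`ere equation for $T$ in Jacobi-field form: along each geodesic $s \mapsto \exp^{g^\rho_{can}}_x(s\,\nabla^{g^\rho_{can}}\varphi(x))$ the differential $dT_x$ factors as a product of a Jacobi-field matrix, which depends only on the round geometry, and a ``$c$-Hessian'' $\nabla^2\varphi + (\text{curvature correction})$, and the equation equates the determinant of this product with the density ratio. The decisive structural input is that the source satisfies $\Ric_{g^\rho_{can}} = \rho\, g^\rho_{can}$ while the target satisfies $\Ric_g \geq \rho\, g$ --- the exact analogue of the Gaussian-to-log-concave comparison driving Caffarelli's argument --- and I would attempt to convert this into an upper bound on the largest eigenvalue of $dT_x$ via a maximum-principle argument applied to a carefully chosen scalar quantity (the top eigenvalue of the $c$-Hessian, or the maximal conformal factor relating $g^\rho_{can}$ and $T^*g$), in the spirit of Kolesnikov's Riemannian extensions of Caffarelli's theorem \cite{KolesnikovContractionSurvey} and of the Hessian estimates in \cite{KolesnikovEMilman-Reilly}. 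A natural preliminary step is the perturbative case in which $g$ is $C^\infty$-close to $g^\rho_{can}$ --- where the linearized Monge--Amp\`ere operator is a small perturbation of the round Laplacian and the maximum principle is transparent --- followed by a continuity/compactness argument along a path of metrics inside the $\CD(\rho,n)$ class.

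The main obstacle I expect is precisely the curvature correction terms absent in $\Real^n$: the linearized Monge--Amp\`ere operator on the sphere acquires zeroth-order terms from the Jacobi equation, so a naive maximum principle yields an inequality of the wrong sign unless the MTW non-negativity of the round cost is exploited very carefully, and one must also control the cut locus (e.g.\ via the stay-away estimates of Loeper--Villani). A second, genuine difficulty is passing from determinant-type control --- which the Monge--Amp\`ere equation together with a maximum principle most readily delivers --- to control of the operator norm of $dT_x$, which contraction actually requires; closing this gap will likely demand either an a priori $C^2$ estimate for $\varphi$ that is \emph{uniform} over the $\CD(\rho,n)$ class, or a monotone quantity along an auxiliary flow. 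On that note, a softer alternative is to imitate the Kim--E.\ Milman heat-flow construction \cite{KimEMilmanGeneralizedCaffarelli} by interpolating between $g^\rho_{can}$ and $g$ --- for instance along a normalized Ricci-type flow, using $\Ric_g \geq \rho g$ for monotonicity --- and tracking a Lipschitz bound for the time-dependent transport maps; the absence of a canonical interpolation between two distinct metrics carrying the same $\CD(\rho,n)$ structure makes this route speculative but possibly more robust than the direct elliptic one.
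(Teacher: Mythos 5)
This statement is Conjecture \ref{conj:4} of the paper: it is an \emph{open conjecture}, not a theorem, and the paper offers no proof. The author explicitly flags the task you are attempting --- adapting Caffarelli's optimal-transport argument, or the Kim--Milman heat-flow argument, from the scalar setting (densities) to this $2$-tensorial setting (metrics) --- as an interesting open problem. Your proposal is a reasonable research program, but it is not a proof, and you acknowledge as much: the decisive step, upgrading determinant-level (Monge--Amp\`ere) control of $dT$ to the operator-norm bound $T^*g \leq g^\rho_{can}$, is left as something you ``would attempt,'' with the admission that ``closing this gap will likely demand'' estimates you do not supply. That step is precisely where the known argument breaks down. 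In Caffarelli's Euclidean proof one differentiates the Monge--Amp\`ere equation twice in a fixed direction $e$ and applies the maximum principle to $\partial^2_{ee}\varphi$; the inequality closes because the hypotheses $\nabla^2 V \geq \rho\, Id$ on the source and $\nabla^2 W \leq \rho\, Id$ on the target are \emph{full Hessian bounds of scalar potentials in one fixed flat metric}, which can be tested against the single pair of vectors $e$ and $dT(e)$. On the sphere the hypothesis $\Ric_g \geq \rho g$ is a \emph{traced} curvature bound: the analogous second differentiation of the equation produces Ricci terms only after summing over an orthonormal frame, which is compatible with controlling $\det dT$ (and indeed yields Bishop--Gromov-type volume statements) but loses exactly the directional information needed to bound the largest singular value of $dT$. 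No maximum-principle quantity is known that closes this loop, and your proposal does not exhibit one.

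Two further points. First, your candidate map is optimal for the cost $\frac{1}{2} d_{g^\rho_{can}}^2$, which sees only the source metric, whereas the contraction to be proven is measured in the target metric $g$; in Caffarelli's theorem source and target carry the same Euclidean metric, so this ambiguity never arises, and here it is not clear that the $d_{g^\rho_{can}}^2$-optimal coupling is even the right candidate. Second, the suggested perturbative-plus-continuity-method route requires both openness and closedness of the set of $\CD(\rho,n)$ metrics on $S^n$ admitting such a contraction; neither is established, and closedness in particular would require exactly the uniform $C^1$ bound on $T$ over the whole class that is the content of the conjecture. The heat-flow alternative you mention faces the same structural obstacle: there is no canonical monotone interpolation between two distinct metrics within the $\CD(\rho,n)$ class playing the role that the heat semigroup plays for densities in the Kim--Milman construction. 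In short, the proposal correctly identifies the relevant machinery and the relevant difficulties, but the conjecture remains open and the proposal does not resolve it.
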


Note that a connected complete Riemannian manifold $(M,g)$ with $\Ric_g \geq \rho g$, $\rho > 0 $, is necessarily compact and has finite volume.
The reader should note the apparent analogy between the latter conjecture and Caffarelli's Contraction Theorem, in view of  the definition of the generalized Ricci tensor (\ref{eq:Ric-Tensor}). Conjecture \ref{conj:4} is consistent with:
\begin{itemize}
\item The Bonnet--Meyers bound on the diameter of such manifolds \cite{GHLBookEdition3}:
\[
diam(M) \leq diam(S^n,g^\rho_{can}) = \frac{\pi}{\sqrt{\rho / (n-1)}} .
\]
\item The Bishop--Gromov volume estimate \cite{GHLBookEdition3}:
\[
\frac{\vol_g(B(x_0,r))}{\vol_g(M^n)} \geq \frac{\vol_{g^\rho_{can}}(B(x_1,r))}{\vol_{g^\rho_{can}}(S^n)} \;\; \forall x_0 \in M^n, x_1 \in S^n \;\; \forall r > 0 ,
\]
which in particular implies (letting $r\rightarrow 0$) $\vol_g(M^n) \leq \vol_{g^\rho_{can}}(S^n)$. 
\item
The Bakry--\'Emery log-Sobolev estimate $L(M^n,g,\vol_g) \geq L(S^n,g^\rho_{can},\vol_{g^\rho_{can}}) =\rho \frac{n}{n-1}$ \cite{BakryEmery} (see Section \ref{sec:semigroup}).
\item
The sharp Sobolev inequality for $\CD(\rho,n)$ spaces \cite[Theorem 6.8.3]{BGL-Book}.
 \item
The Gromov--L\'evy isoperimetric inequality \cite{GromovGeneralizationOfLevy}.
\item
Conjecture \ref{conj:3} on the full spectrum, including all of the known consequences mentioned after its formulation above. \end{itemize}
A positive answer to Conjecture \ref{conj:4} would thus yield a single reason to all of these classical facts (albeit only for manifolds which are diffeomorphic to a sphere). It would be very interesting to adapt and extend the Optimal-Transport or Heat-Flow approaches of Caffarelli \cite{CaffarelliContraction} and Kim and the author \cite{KimEMilmanGeneralizedCaffarelli} from the scalar setting (involving densities) to the above $2$-tensorial setting (involving metrics) - cf. \cite{EMilman-GIF-2009}.

As before, we have restricted Conjecture \ref{conj:4} to $M=S^n$ due to potential topological obstructions. Indeed, a map $T : (S^n,g^\rho_{can},\vol_{g^\rho_{can}}) \rightarrow (M^n,g,\vol_g) $ as in Conjecture \ref{conj:4} must be surjective, since $T(S^n) \subset M^n$ is compact as a continuous image of a compact set, while its open complement satisfies $\vol_g(M^n \setminus T(S^n)) = 0$, and hence must be empty. Consequently, if we assume in addition that $T$ is injective, Brouwer's Invariance of Domain theorem would imply as before that $T$ is open, and hence $M$ must be homeomorphic to $S^n$. 

For simplicity, we have chosen not to explicitly formulate the most general possible conjectures in the above spirit. Let us only remark that if we do not insist on finding a \emph{topologically $n$-dimensional} model source space which conjecturally contracts onto $\CD(\rho,N)$ $n$-dimensional weighted-manifolds ($\rho > 0$), thereby giving up on obtaining asymptotically sharp eigenvalue estimates (per Weyl's law) and on injectivity of the contracting map, then a reasonable choice for such a model source space, 
at least when $N > n$ is an integer, is the rescaled canonical $N$-sphere; this would still be consistent with all known generalizations of the above properties (see \cite{BGL-Book, SturmCD12, EMilmanSharpIsopInqsForCDD} and the references therein), and contrary to the counterexample of Subsection \ref{subsec:intro-positive}, is easily verified for $(M^n,g) = (S^n,g_{can}^\rho)$. 
It is also possible to consider adding the case $\rho = 0$ to the above setting (under suitable modifications, replacing $S^n$ with $\Real^n$), but we do not have a clear sense of how reasonable this might be.

\subsection{Comparison on Average}

While we were not able to resolve Conjectures \conjone nor \ref{conj:3}, we would still like to mention some tools for controlling the eigenvalues \emph{in some averaged sense}. In Subsection \ref{subsec:Weyl}, we recall Weyl's asymptotic law for the distribution of eigenvalues in the compact case, and develop its analog in the weighted non-compact setting. However, we would like to obtain some concrete non-asymptotic estimates as well. 

In Subsection \ref{subsec:CLR}, we show that Conjecture \ref{conj:3} is satisfied up to a dimension independent multiplicative constant for exponentially large (in the dimension) eigenvalues, by making use of the classical Cwikel--Lieb--Rozenblum inequality together with the sharp Sobolev inequality on $\CD(\rho,n)$ weighted manifolds. We did not manage to verify a similar conclusion for Conjecture \conjone, perhaps because the $\CD(\rho,\infty)$ condition does not directly feel the dimension $n$. We therefore proceed to obtain some average estimates for the eigenvalues. 

When $\mu$ is a probability measure, a very natural function encapsulating the growth of the eigenvalues is given by the trace of the heat semi-group $P_t = \exp(t \Delta_{g,\mu})$:
\[
Z(t) = Z_{(M^n,g,\mu)}(t) := \text{tr}(P_t) = \int p_t(x,x) d\mu(x) = \sum_{k=1}^\infty \exp(- t \lambda_k) ,
\]
where $p_t(x,y) = p_t(M^n,g,\mu)(x,y)$ denotes the heat-kernel (with respect to $\mu$). 
It is an interesting question to establish conditions on $(M^n,g,\mu)$ which ensure that $P_t$ is trace-class, i.e. that $Z(t) < \infty$ for $t > t_0 \geq 0$. In particular, upper bounds on $Z(t)$ yield lower bounds on the individual eigenvalues by the trivial estimate:
\begin{equation} \label{eq:Z-to-lambda}
\forall k \geq 1 \;\;\; k \exp(-t \lambda_k) \leq Z(t) . 
\end{equation}
However, it may very well happen that the spectrum is discrete (equivalently, that $\lambda_k$ increase to infinity), and yet $Z(t) = \infty$ for all $t > 0$. 
Note that $Z(t)$ will inevitably \emph{depend on the dimension} $n$, e.g. because of Weyl's law or because of the spectrum's tensorization property - see Section \ref{sec:eigenvalues} for concrete examples such as for the $n$-dimensional Gaussian space or sphere. This is in contrast to more traditional objects of study on weighted-manifolds (such as the spectral-gap or log-Sobolev constant), which are invariant under tensorization, and thus often dimension-independent. 

\medskip

In connection to the discussion regarding previously known estimates on the spectrum, we mention the following result of B\'erard and Gallot \cite{BerardGallot-HeatEquationComparison,Berard-LecturesOnSpectralGeometry} (see also Besson \cite[Appendix]{Besson-BeurlingDenyPrinciple}). By employing the Gromov--L\'evy isoperimetric inequality \cite{GromovGeneralizationOfLevy}, these authors showed that for any connected $(M^n,g,\tilde{\vol_g})$ with $\Ric_{g} \geq \rho g$, $\rho >0$, one has:
\[
\sup_{x \in M} p_t(M^n,g,\tilde{\vol}_g)(x,x) \leq  p_t(S^n,g^\rho_{can},\tilde{\vol}_{g^\rho_{can}})(x_0,x_0) \;\;\; \forall t>0 \;,\; x_0 \in S^n \; ,
\]
where $\tilde{\mu} = \mu / \norm{\mu}$ denotes $\mu$ renormalized to be a probability measure. In particular, this yields (\ref{eq:BG-trace}), 
confirming Conjecture \ref{conj:3} in an averaged (yet strictly weaker) sense. By employing the Bakry--Ledoux isoperimetric inequality \cite{BakryLedoux}, it may also be possible to obtain a somewhat similar \emph{on-average} confirmation of Conjecture \conjone; this is not immediate and will be explored elsewhere. Here, we are more interested in another direction. 

\medskip

Upper bounds on $Z(t)$ and moreover lower bounds on $\lambda_k$ under various assumptions on  $(M^n,g,\mu)$ were obtain by F.-Y. Wang in \cite{Wang-EigenvalueEstimates,Wang-SurveyOnSemiGroupAndSpectrum} (we refer to the excellent book \cite{BGL-Book} and to Section \ref{sec:semigroup} for subsequent missing references and terminology, which we only mention here in passing). Whenever the space satisfies a Sobolev inequality (or equivalently a Nash inequality, or finite-dimensional log-Sobolev inequality \cite[Chapter 6]{BGL-Book}), and in particular under a $\CD(\rho,N)$ condition for $\rho > 0$ and finite $N \in [n,\infty)$, it is well-known that $P_t$ is ultracontractive \cite[Corollary 6.3.3.]{BGL-Book},  i.e. that the heat-kernel $p_t$ is bounded, yielding a trivial upper-bound on $Z(t)$. This ultracontractive case has been extensively studied in the literature, see e.g. \cite{DaviesSemiGroupBook,BGL-Book,Wang-EigenvalueEstimates}. The borderline case when some \emph{additional} information is needed is precisely when $P_t$ is only hypercontractive, i.e. when  $(M^n,g,\mu)$ satisfies a log-Sobolev inequality. In that case, and even under a weaker super-Poincar\'e (or $F$-Sobolev) inequality, assuming in addition that the space satisfies $\CD(\rho,\infty)$ for some $\rho \in \Real$, Wang obtained very general lower bounds on $\lambda_k$ depending on concentration properties of the distance function $d(x,x_0)$ to a given point $x_0 \in M$.

In Section \ref{sec:semigroup}, we expand on the quantitative relation between hypercontractivity of the heat semi-group, the property of being trace-class (i.e. upper estimates on $Z(t)$), and higher-order integrability properties of the associated heat-kernel, both in general and under a $\CD(\rho,\infty)$ condition, $\rho \in \Real$. Our approach closely follows Wang's method, based on his dimension-free Harnack inequality. Our results are weaker and less general than Wang's, but the proofs are a bit simplified, yielding estimates with concrete dimension-dependence. Finally, a general interesting trichotomy for the heat-kernel is deduced. In Section \ref{sec:conclude}, we provide some concluding remarks.

\medskip \noindent \textbf{Acknowledgements.} I thank Franck Barthe, Mikhail Gromov, Tobias Hartnick, Michel Ledoux and Yehuda Pinchover for their comments. I also thank the anonymous referee for helpful comments which have improved the exposition of the paper.

\section{Eigenvalue Calculation and Asymptotics} \label{sec:eigenvalues}

We begin with calculating the eigenvalues or their asymptotic distribution for several notable weighted-manifolds. 

\subsection{Gaussian Space} \label{subsec:Gaussian}

It is well known that the one-dimensional Gaussian Space $(\Real,\abs{\cdot},\gamma^1)$ has simple spectrum at $\N_0 := \set{0,1,2,3,\ldots}$ (so that each of the eigenvalues has multiplicity one), with the eigenfunctions of $-\Delta_{\gamma^1}$ being precisely the Hermite polynomials.  
By the tensorization property of the spectrum, it follows that the product space $(\Real^n,\abs{\cdot},\gamma^n)$ has spectrum $\N_0 + \ldots + \N_0$, where the sum is repeated $n$ times and is counted with multiplicity. In other words, the spectrum consists of $\N_0$ and the multiplicity of the eigenvalue $l \in \N_0$ is given by $n-1 + l \choose l$. It follows that the eigenvalue counting function satisfies for $\lambda \in \N_0$:
\[
\# \set{ \lambda_k \leq \lambda} = \sum_{l=0}^\lambda {n-1 + l \choose l} = {n+\lambda \choose \lambda} \leq \min \brac{ \brac{\frac{(n+\lambda) e}{n}}^n ,  \brac{\frac{(n+\lambda) e}{\lambda}}^\lambda}  ,
\]
 and consequently:
\begin{equation} \label{eq:Gaussian-eigenvalues}
\lambda_k \geq \max \brac{ \frac{n}{e} k^{1/n} -  n , \frac{\log k}{\log ((n+1) e)} } .
\end{equation}
Furthermore, we record that as $\lambda \rightarrow \infty$ we have:
\begin{equation} \label{eq:Gaussian-Theta}
\# \set{ \lambda_k \leq \lambda} =  \frac{\lambda^n}{\Gamma(n+1)} (1 + o(1)). 
\end{equation}

\subsection{Canonical Sphere} \label{subsec:sphere}

Let $(S^n,g_{can},\vol_{g_{can}})$ denote the $n$-sphere with its canonical metric and volume measure, embedded as the unit-sphere in Euclidean space $(\Real^{n+1},\abs{\cdot})$; its Ricci tensor is equal to $(n-1) g$. It is well known that the eigenfunctions of the associated Laplacian $-\Delta_{S^n}$ are given by spherical-harmonics, i.e. the restriction of harmonic homogeneous polynomials in $\Real^{n+1}$ onto $S^n$. The eigenvalue associated to harmonic polynomials $q_l(x)$ of degree $l \in \N_0$ is $l (l + n-1)$ \cite{VilenkinClassicBook}. Since it is well known that any homogeneous polynomial $p(x)$ of degree $m \in \N_0$ can be uniquely decomposed into its harmonic components as follows:
\[
p(x) = q_{m}(x) + \abs{x}^2 q_{m-2}(x) + \abs{x}^4 q_{m-4}(x) + \ldots ,
\]
we see that the subspace spanned by spherical harmonics of even degree at most $m \in 2 \N_0$ or of odd degree at most $m \in 2 \N_0 -1$ is of dimension $n + m \choose m$. Consequently, the subspace of all spherical harmonics of degree at most $m \in \N_0$ is of dimension ${n + m \choose m} + {n + m-1 \choose m-1}$, with corresponding eigenvalues being at most $m (m + n-1)$. 

Now let us rescale the canonical sphere to have radius $\sqrt{n-1}$, so that its Ricci tensor coincides with the metric and therefore satisfies $\CD(1,\infty)$. Since the eigenvalues scale quadratically in the metric, the eigenvalue counting function consequently satisfies for all $m \in \N_0$:
\[
\# \set{ \lambda_k \leq m \frac{m+n-1}{n-1} } = {n + m \choose m} + {n + m-1 \choose m-1} . 
\]
One could hope that the counting function of the rescaled $n$-sphere is always dominated by that of the $n$-dimensional Gaussian:
\[
 {n + m \choose m} + {n + m-1 \choose m-1} \leq_{?} {n + v  \choose v}  ~,~ v :=  \left \lfloor   m \frac{m+n-1}{n-1}  \right \rfloor = m +  \left \lfloor \frac{m^2}{n-1} \right \rfloor .
\]
However, this is \textbf{not the case} for the first eigenvalues, and is most apparent for $m=1$, i.e. linear functions on the sphere. Indeed, for all $n \geq 3$, $\lambda_{n+2}$ on the rescaled sphere is equal to the eigenvalue of the last among its $n+1$ linear functionals, i.e. to $\frac{n}{n-1}$, whereas on Gaussian space it is already equal to $2$. This show that in general, one cannot hope for a positive answer to Question \ref{ques:spectrum}.

For future reference, we record that the unscaled canonical sphere $S^n$ satisfies for all $\lambda \geq n^2$:
\begin{equation} \label{eq:sphere-concrete}
\# \set{ \lambda_k(S^n,g_{can},\vol_{g_{can}}) \leq \lambda } \geq {\lceil \sqrt{\lambda} \rceil \choose n}  \geq \brac{\frac{\sqrt{\lambda}}{n}}^n .
\end{equation}

\subsection{Weyl's asymptotic law for weighted-manifolds} \label{subsec:Weyl}

When $(M^n,g)$ is compact, then as soon as the density of $\mu$ is bounded away from $0$ and $\infty$, the classical Weyl law \cite{ChavelEigenvalues} for the eigenvalue asymptotics of the unweighted Laplacian $-\Delta_g$  applies to the weighted one $-\Delta_{g,\mu}$, and we have as $\lambda \rightarrow \infty$:
\begin{equation} \label{eq:classical-Weyl}
\# \set{\lambda_k(M^n,g,\mu) \leq \lambda} = \frac{\vol(B_2^n)}{(2\pi)^n} \vol_g(M^n) \lambda^{n/2}  ( 1 + o(1)) .
\end{equation}
Note that by Bishop's volume comparison theorem \cite{GHLBookEdition3}, $\vol_g(M^n) \leq \vol_{g_{can}^\rho}(S^n)$ for any (connected) $(M^n,g)$ with $\Ric_g \geq \rho g$ with $\rho > 0$, and so we see that Weyl's formula (\ref{eq:classical-Weyl}) confirms Conjecture \ref{conj:3} in an asymptotic sense (as $k \rightarrow \infty$):
\begin{equation} \label{eq:asymptotic-conj3}
 \frac{\lambda_k(M^n,g,\vol_g)}{\lambda_k(S^n,g^\rho_{can},\vol_{g_{can}^\rho})} \geq 1+o(1)   .
\end{equation}

When $(M^n,g)$ is non-compact, the situation is more delicate. As we have not found an explicit reference in the literature, we derive the asymptotics ourselves from the known results for Schr\"{o}dinger operators, and for simplicity, we restrict to the Euclidean case $(M^n,g) = (\Real^n,\abs{\cdot})$.

Recall that $\mu = \exp(-V(x)) dx$. Denote by $U : L^2(dx) \rightarrow L^2(\mu)$ the isometric isomorphism given by the multiplication operator $U(f) = f \exp(V/2)$. 
Conjugating $-\Delta_\mu$ by $U$, we obtain the Schr\"{o}dinger operator $H : L^2(dx) \rightarrow L^2(dx)$ given by:
\begin{align*}
H(f) & := U^{-1} (-\Delta_\mu) U (f) = -\Delta f + W f ,\\
 W & :=  U^{-1} (-\Delta_\mu) U (1) = \frac{1}{4} \abs{\nabla V}^2 -  \frac{1}{2} \Delta V ,
\end{align*}
with domain $Dom(H) = U^{-1}(Dom(\Delta_\mu))$. This well-known procedure is a form of Doob's h-transform (e.g. \cite[Section 1.15.8]{BGL-Book},\cite[Section IV]{HTransform-Reference}). 
Since $-\Delta_\mu$ and $H$ are unitarily equivalent they are both self-adjoint on their respective domains and have the same spectrum. 
Clearly $C_c^\infty(\Real^n) \subset Dom(\Delta_\mu)$ and hence $C_c^\infty(\Real^n) \subset Dom(H)$. Since it is well known (e.g. \cite{Kato-EssentialSelfAdjointnessOfSchrodinger}) that a Schr\"{o}dinger operator $H$ is essentially self-adjoint on $C_c^\infty(\Real^n)$ as soon as $W$ is in $L^2_{loc}(\Real^n)$ and bounded from below, it follows that in such a case its unique self-adjoint extension necessarily coincides with the one described above having domain $Dom(H)$. Consequently, we may apply the known Weyl formula for eigenvalue asymptotics of self-adjoint Schr\"{o}dinger operators (e.g. \cite{Gurarie-SpectralTheoryForEllipticOperators, BirmanSolomyak-Book, Shubin-SpectralTheoryBook}),
which asserts that under suitable regularity assumptions:
\begin{equation} \label{eq:Weyl}
\# \set{\lambda_k(\Real^n,\abs{\cdot},\mu) \leq \lambda} = \frac{\Theta(\lambda) }{(2\pi)^n} (1 + o(1)) ~,~ \Theta(\lambda) = \text{Vol}_{\Real^{n} \times \Real^n}(\Xi_{\lambda}) ,
\end{equation}
where $\Xi_{\lambda}$ denotes the following phase-space level set of the operator's symbol:
\[
\Xi_{\lambda} := \set{ ( x,p) \in \Real^n \times \Real^n \; ; \; \abs{p}^2 + W(x) < \lambda} .
\]
More precisely (see e.g. \cite[Theorem 6]{Gurarie-SpectralTheoryForEllipticOperators}), (\ref{eq:Weyl}) holds under the assumptions that:
\begin{enumerate}
\item $W$ is smooth and bounded below.
\item $\int_{\Real^n} \frac{dx}{(1+W_+(x))^{C}} < \infty$ for some $C > 0$. 
\item For some $0 < \alpha < \beta$ we have $\alpha \leq \lambda \frac{d}{d\lambda} \log \Theta(\lambda) \leq \beta  \;\;\; \forall \lambda > 0 $. 
\end{enumerate}
\begin{rem}
It is frequently assumed in the study of Schr\"{o}dinger operators that $W \geq 0$ in order to obtain a positive semi-definite operator, and this is also the standing assumption in \cite{Gurarie-SpectralTheoryForEllipticOperators}. However, if $W$ is only assumed bounded below, we can simply consider $H_2 = H + C$ where $C \geq 0$ is a constant so that $W_2 = W + C \geq 0$; the resulting shift in the spectrum is immaterial for the asymptotic distribution of eigenvalues, thereby justifying our slightly extended assumptions above.
The assumption that $W$ is bounded below also ensures that $H$ is essentially self-adjoint on $C_c^\infty(\Real^n)$, as explained above.
\end{rem}

Note that in typical situations (e.g. as in the next subsection):
\begin{equation} \label{eq:W-asy}
W(x) = \frac{1}{4} \abs{\nabla V(x)}^2 (1 + o(x)) \text{ as } \abs{x} \rightarrow \infty .
\end{equation}
Assuming w.l.o.g. that the minimum of $V$ is attained at the origin, it follows that if $\nabla^2 V \geq \rho Id$ with $\rho > 0$, then $\abs{\nabla V(x)} \geq \rho \abs{x}$ and hence $W(x) \geq \frac{1}{4} \rho^2 \abs{x}^2 (1 + o(x))$. In that case, (\ref{eq:Weyl}) implies that as $\lambda \rightarrow \infty$:
\[
\# \set{\lambda_k(\Real^n,\abs{\cdot},\mu) \leq \lambda} \leq  \# \set{\lambda_k(\Real^n,\abs{\cdot},\gamma^n_\rho) \leq \lambda} (1+o(1)) \; , 
\]
in asymptotic accordance with Theorem \ref{thm:spectrum}. An extension of this reasoning to the manifold setting would similarly asymptotically confirm Conjecture \conjone, but we do not pursue the details here. 

For future reference, it will be more convenient to rewrite (\ref{eq:Weyl}) as:
\begin{eqnarray}
\nonumber \Theta(\lambda) & = & \vol(B_2^n) \int_{\Real^n} (\lambda - W(x))_+^{n/2} dx \\
 \label{eq:Theta} & = & \vol(S^{n-1}) \int_0^{\sqrt{\lambda}} r^{n-1} Vol(\set{W \leq \lambda - r^2}) dr .
\end{eqnarray}

\subsection{Asymptotics for the measures $\nu^n_p$}

Let us now calculate the asymptotic distribution of eigenvalues for the product measures $\nu^n_p = c_p^n \exp(-\frac{1}{p} \sum_{i=1}^n \abs{x_i}^p) dx$, $p \in (1,\infty)$, which appear in various places in this work.
We exclude the case of the exponential measure $p=1$ since $-\Delta_{\nu^n_1}$ does not have discrete spectrum (this will also be apparent from the ensuing calculations). 
Fixing $p \in (1,\infty)$, we have:
\[
V(x) = \frac{1}{p} \sum_{i=1}^n \abs{x_i}^p - n \log c_p ,
\]
and:
\[
W(x) =  \frac{1}{4} \sum_{i=1}^n \abs{x_i}^{2p-2} - \frac{p-1}{2} \sum_{i=1}^n \abs{x_i}^{p-2} .
\]
An application of H\"{o}lder's inequality verifies that $W \geq 0$ outside the compact set $(2(p-1))^{1/p} n^{1/(2p-2)} B_{2p-2}^n$ when $p \in [2,\infty)$, and so it is clearly bounded below in that case. We now address the unboundedness of $W$ from below when $p \in (1,2)$, in tandem with the minor nuisance that $V$ and $W$ are not smooth on the coordinate hyperplanes (for any non-even $p \in (1,\infty)$). Indeed, we may 
always approximate $V$ by smooth functions $V_\eps$ so that:
\[
 \norm{V - V_\eps}_{L^\infty} \leq \eps \text{ and } \snorm{\abs{\nabla V_\eps}^2 - \abs{\nabla V}^2}_{L^\infty} \leq \eps ,
 \]
and so that in addition, when $p \in [2,\infty)$, $\norm{\Delta V_\eps - \Delta V}_{L^\infty} \leq \eps$, whereas when $p \in (1,2)$, $\Delta V_\eps$ is bounded above uniformly in $\eps>0$. The min-max principle (recalled in Section \ref{sec:contractions}) will immediately ensure that this results in a perturbation in the spectrum by a multiplicative factor of at most $\exp(2 \eps)$, which can be made arbitrarily close to $1$. The above properties of $W_\eps = \frac{1}{4} \abs{\nabla V_\eps}^2 - \frac{1}{2} \Delta V_\eps$ ensure that: 
\[
W_\eps(x) = \frac{1}{4} \sum_{i=1}^n \abs{x_i}^{2p-2} (1 + o(1)) \text{ as } \abs{x} \rightarrow \infty ,
\]
uniformly in $\eps > 0$. It is then easy to see that the function $\Theta_\eps(\lambda)$ has polynomial growth, and so the regularity and boundedness assumptions described above are satisfied, and the asymptotic distribution (\ref{eq:Weyl}) is valid, uniformly in $\eps > 0$. As in (\ref{eq:W-asy}), 
we conclude from (\ref{eq:Theta}) that:
\begin{align*}
(1+o(1)) \Theta_\eps(\lambda) & = Vol(S^{n-1}) Vol(B_{2p-2}^n) \int_0^{\sqrt{\lambda}} r^{n-1} (4 (\lambda - r^2))^{\frac{n}{2p-2}} dr \\
& = Vol(S^{n-1}) Vol(B_{2p-2}^n) 2^{\frac{n}{p-1} -1} B\brac{\frac{n}{2} , \frac{n}{2(p-1)} + 1} \lambda^{\frac{n}{2} \frac{p}{p-1}} ,
\end{align*}
where $B(x,y)$ denotes the Beta function. Plugging in the known formulae for the volume of the $\ell_{2p-2}^n$ ball and the Beta function, we finally obtain:
\begin{equation} \label{eq:WeylForGammaP}
\# \set{\lambda_k(\Real^n,\abs{\cdot},\nu^n_p) \leq \lambda} = \frac{2^{\frac{n}{p-1}} \Gamma(\frac{1}{2(p-1)} + 1)^n}{ \pi^{n/2} \Gamma(\frac{n}{2} \frac{p}{p-1} + 1) } \lambda^{\frac{n}{2} \frac{p}{p-1}} (1 + o(1)) . 
\end{equation}
Note that for $p=2$, as $\Gamma(3/2) = \sqrt{\pi}/2$, this is in precise agreement with the calculation carried out for the Gaussian case (\ref{eq:Gaussian-Theta}). Also observe that as $p\rightarrow 1$ the function on the right-hand side explodes, in accordance with the formation of essential (non-discrete) spectrum in the limiting case $p=1$. Finally observe that as $p \rightarrow \infty$, $\nu^n_p$ converges to the uniform measure on $B_\infty^n = [-1,1]^n$, and the right-hand side converges to:
\[
\frac{\lambda^{n/2}}{\pi^{n/2} \Gamma(n/2+1)} (1+o(1)) = \frac{Vol(B_2^n)}{(2\pi)^n} Vol([-1,1]^n) \lambda^{n/2} (1+o(1)) ,
\]
in precise accordance with the classical Weyl estimate (\ref{eq:classical-Weyl}).

\subsection{Cwikel--Lieb--Rozenblum inequality} \label{subsec:CLR}

Let $(M^n,g,\vol_g)$ be a connected manifold satisfying $\Ric_g \geq \rho g$ with $\rho > 0$. We have already seen in Subsection \ref{subsec:Weyl} that Weyl's law confirms Conjecture 3 in an asymptotic sense, but without delving into its proof, it is not possible to extract from it non-asymptotic estimates on the individual eigenvalues.

However, individual (loose) estimates may be obtained from the B\'erard--Gallot heat-kernel estimate mentioned in the Introduction (\cite{BerardGallot-HeatEquationComparison,Berard-LecturesOnSpectralGeometry}, \cite[Appendix]{Besson-BeurlingDenyPrinciple}):
\[
\sum_{k=1}^\infty \exp(-t \lambda_k(M^n,g,\vol_g)) \leq \sum_{k=1}^\infty \exp(-t \lambda_k(S^n,g^\rho_{can},\vol_{g^\rho_{can}})) \;\;\; \forall t > 0 ,  
\]
which confirms Conjecture 3 in an averaged sense. In particular, this may be used to prove Conjecture 3 up to a multiplicative constant $C_n > 0$ (in fact, without restricting to manifolds diffeomorphic to the sphere). 

\medskip

In this subsection, we mention yet another method for obtaining (non-sharp but quite good) explicit estimates on the individual eigenvalues, by means of the Cwikel--Lieb--Rozenblum inequality \cite{Cwikel-CLR,Lieb-CLR-Announced,Rozenblum-CLR} (see also \cite{Lieb-CLR,RozenblumSolomyak-Survey,Laptev-LiebThirringSurvey}) for the number of negative eigenvalues of the Schr\"{o}dinger operator $-\Delta + W$, established independently and by different means by these three authors. Lieb's approach \cite{Lieb-CLR} relied on the ultracontractivity of the associated semi-group, whereas Li and Yau provided in \cite{LiYau-PolyaAndCLR} yet another proof based on the Sobolev inequality; these two assumptions were shown to be equivalent by Varopoulos \cite{Varopoulos-EquivalenceOfSobolevAndUltracontractivity} (see also \cite{BGL-Book}). The CLR inequality has since been generalized to a very abstract setting, and we employ the following version by Levin and Solomyak from \cite{LevinSolomyak-CLRForSemiGroups}:

\begin{thm*}[Generalized CLR inequality, Levin--Solomyak]
Let $(\Omega,\mu)$ denote a $\sigma$-finite measure space. 
Let $A$ denote a self-adjoint operator on $L^2(\mu)$ with associated non-negative closed Dirichlet form $Q[f] = \int (A f) f d\mu$ and dense domain $Dom(Q)$. Assume that $A$ generates a symmetric positivity preserving semi-group $\exp(-t A)$, and that for some $q > 2$:
\[
\norm{f}_{L^q(\mu)}^2 \leq Q[f]  \;\;\; \forall f \in Dom(Q) .
\]
If:
\[
0 \leq W \in L^p(\mu) \;\;\;, \;\;\; p := \frac{q}{q-2}  \;,
\]
then the negative spectrum of $A-W$ is discrete and:
\[
\# \set{\lambda_k(A - W) \leq 0}  \leq e^p \int W^p d\mu .
\]
\end{thm*}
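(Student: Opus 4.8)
The plan is to reduce the operator inequality to a statement about the heat semi-group and then apply a Feynman--Kac-type representation together with the hypothesized $L^2 \to L^q$ smoothing. First I would note that the hypothesis $\norm{f}_{L^q(\mu)}^2 \leq Q[f]$ is equivalent, by a standard argument (Varopoulos, see \cite{BGL-Book}), to an ultracontractive-type bound on the semi-group $P_t = \exp(-tA)$; precisely, the Nash/Sobolev-type inequality $\norm{f}_{L^q}^2 \leq Q[f]$ with $q > 2$ yields $\norm{P_t}_{L^1 \to L^\infty} \leq c \, t^{-p}$ where $p = q/(q-2)$ is exactly the exponent in the statement. This is the decay rate that will feed into the final integral $\int W^p d\mu$.

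The core step is the Birman--Schwinger principle: the number of non-positive eigenvalues of $A - W$ equals the number of eigenvalues $\geq 1$ of the Birman--Schwinger operator $K_W := W^{1/2}(A)^{-1}W^{1/2}$ (interpreting $A^{-1}$ suitably, or working with $A + \eps$ and letting $\eps \downarrow 0$). Hence $\#\{\lambda_k(A-W) \leq 0\} \leq \#\{\lambda_k(K_W) \geq 1\} \leq \mathrm{tr}\, \phi(K_W)$ for any function $\phi$ with $\phi \geq 1$ on $[1,\infty)$ and $\phi \geq 0$. The standard choice in the semi-group approach is to write $A^{-1} = \int_0^\infty P_t \, dt$ and then use, for a parameter to be optimized, the elementary inequality that bounds the eigenvalue count by $e \cdot \mathrm{tr}(W^{1/2} P_t W^{1/2})$ integrated against $e^{-t}\, dt/t$ or a similar kernel — this is precisely where the constant $e^p$ originates (one optimizes $e^{s}$ against $s^{-p}$-type bounds, giving the factor $e^p$ after the $t$-integration produces a Gamma function that, combined with the $p$ powers, collapses to $e^p$). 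Concretely, one shows
\[
\#\set{\lambda_k(A-W) \leq 0} \leq e \int_0^\infty e^{-t}\, \mathrm{tr}\brac{W^{1/2} P_t W^{1/2}} \frac{dt}{t},
\]
and then estimates $\mathrm{tr}(W^{1/2}P_t W^{1/2}) = \int W(x)\, p_t(x,x)\, d\mu(x) \leq \norm{P_t}_{L^1 \to L^\infty}\int W\, d\mu$ is too crude; instead one interpolates to get the sharp dependence $\int W^p\, d\mu$ by using Jensen/Hölder against the ultracontractive bound $p_t(x,x) \leq c\, t^{-p}$ only on the region where $W$ is large, and carrying out the $t$-integral.

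The main obstacle I anticipate is making the Birman--Schwinger reduction rigorous when $A$ is merely non-negative (not strictly positive), so that $A^{-1}$ may be unbounded or undefined on part of the spectrum — this requires either restricting to the positive spectral subspace of $A$ or a limiting argument with $A + \eps$, and one must check the semi-group $e^{-tA}$ remains positivity preserving and that the Dirichlet-form domain behaves well under multiplication by $W^{1/2}$. A second technical point is justifying the trace and Feynman--Kac manipulations for a general $\sigma$-finite measure space with only the abstract hypotheses given (no manifold structure), which is exactly the setting Levin and Solomyak handle; I would cite their argument for this and focus the proof sketch on the semi-group/interpolation core that produces the clean constant $e^p$ and the exponent $p = q/(q-2)$.
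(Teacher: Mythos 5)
First, a point of reference: the paper does not prove this statement at all --- it is quoted verbatim from Levin--Solomyak \cite{LevinSolomyak-CLRForSemiGroups} and then applied, so there is no internal proof to compare yours against. Judged on its own merits, your strategy is the right one in spirit: deriving ultracontractivity $\norm{P_t}_{L^1\to L^\infty}\lesssim t^{-p}$ from the Sobolev hypothesis (Varopoulos/Nash iteration) and feeding it into a Birman--Schwinger/Lieb semi-group bound is essentially how Levin and Solomyak abstract Lieb's proof of CLR to Markov generators on a $\sigma$-finite measure space, and your anticipated obstacles (non-invertibility of $A$, positivity preservation, trace manipulations in the abstract setting) are the genuine technical ones.

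However, the one inequality you display ``concretely'' is false as written, and the step that replaces it is exactly the missing engine of the proof. You claim
\[
\#\set{\lambda_k(A-W)\leq 0}\;\leq\; e\int_0^\infty e^{-t}\,\mathrm{tr}\brac{W^{1/2}P_tW^{1/2}}\,\frac{dt}{t},
\]
but $\mathrm{tr}\brac{W^{1/2}P_tW^{1/2}}=\int W(x)\,p_t(x,x)\,d\mu(x)$ behaves like $t^{-p}\int W\,d\mu$ as $t\downarrow 0$, so the right-hand side diverges (the integrand is $\sim t^{-p-1}$ near $0$) --- no choice of weight that is bounded below near $t=0$ can work with an integrand linear in $W$. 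The correct form of Lieb's bound is
\[
\#\set{\lambda_k(A-W)\leq 0}\;\leq\;\frac{1}{g(1)}\int_0^\infty\frac{dt}{t}\int_\Omega p_t(x,x)\,g\brac{tW(x)}\,d\mu(x)
\]
for a nonnegative convex $g$ with $g\equiv 0$ on $[0,a]$ (e.g. $g(s)=(s-a)_+$); it is the vanishing of $g$ near $0$ that cures the divergence at $t=0$, and the substitution $s=tW(x)$ in the $t$-integral is what produces $\int W^p\,d\mu$ times $\int_0^\infty s^{-p-1}g(s)\,ds$, with optimization over $a$ (against the explicit ultracontractivity constant extracted from $\norm{f}_{L^q}^2\leq Q[f]$) yielding the clean factor $e^p$. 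Your sentence about ``interpolating only on the region where $W$ is large'' gestures at this truncation but does not carry it out, and without it neither the exponent $p$ on $W$ nor the constant $e^p$ is actually obtained. Establishing the Lieb trace formula itself in the abstract positivity-preserving-semi-group setting (via the Birman--Schwinger kernel and the Feynman--Kac-type path-integral or the purely operator-theoretic route) is the substance of Levin--Solomyak's paper and would need to be supplied, not just cited, for this to count as a proof.
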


In order to apply this result to our manifold (for $n \geq 3$), we employ the sharp Sobolev inequality on $\CD(\rho,N)$ weighted manifolds with finite $N \in [n,\infty)$ and $\rho > 0$ (\cite[Theorem 6.8.3]{BGL-Book}). In our setting ($N=n$), it states that:
\begin{equation} \label{eq:Sobolev}
\norm{f}_{L^q(\mu)}^2 \leq \norm{f}_{L^2(\mu)}^2 + C_{n,\rho}\int_M \abs{\nabla f}^2 d\mu =: Q[f] \;\;\; \forall f \in C^\infty(M)   ,
\end{equation}
where $\mu = \widetilde{\vol_g}$, $q := \frac{2n}{n-2}$ and $C_{n,\rho} :=\frac{4 (n-1)}{n (n-2) \rho}$. 

\begin{cor}
Let $(M^n,g)$ denote a connected $n$-dimensional Riemannian manifold with $\Ric_g \geq \rho g$, $\rho > 0$ and $n \geq 3$. Then for all $\lambda > 0$:
\[
\# \set{ \lambda_k(M^n,g,\vol_g) \leq \lambda } \leq e^{\frac{n}{2}} (C_{n,\rho} \lambda + 1)^{\frac{n}{2}} \;\;,\;\;  C_{n,\rho} :=\frac{4 (n-1)}{n (n-2) \rho} .
\]
\end{cor}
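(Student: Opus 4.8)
The plan is to apply the Generalized CLR inequality of Levin--Solomyak with the self-adjoint operator $A$ chosen so that its Dirichlet form is exactly the one appearing in the sharp Sobolev inequality (\ref{eq:Sobolev}), and with the potential taken to be a suitable positive \emph{constant}. First I would record the relevant reductions: since $\Ric_g \geq \rho g$ with $\rho > 0$, the manifold $(M^n,g)$ is compact by Bonnet--Myers, so $\vol_g(M^n) < \infty$, the normalization $\mu := \widetilde{\vol_g} = \vol_g/\vol_g(M^n)$ is a probability measure, and $-\Delta_{g,\mu}$ has discrete spectrum. Because the eigenvalues depend only on the ratio of the Dirichlet energy to the $L^2$-norm, rescaling the measure by a constant leaves them unchanged, hence $\lambda_k(M^n,g,\vol_g) = \lambda_k(M^n,g,\mu)$ for all $k$, and we may work with $\mu$.

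Next I would set $A := Id + C_{n,\rho}(-\Delta_{g,\mu})$, a non-negative self-adjoint operator on $L^2(\mu)$ whose associated closed Dirichlet form is
\[
Q[f] = \norm{f}_{L^2(\mu)}^2 + C_{n,\rho}\int_M \abs{\nabla f}^2 d\mu ,
\]
the closure of the form on the right-hand side of (\ref{eq:Sobolev}). Its semi-group $\exp(-tA) = e^{-t}\exp(t\,C_{n,\rho}\Delta_{g,\mu})$ is symmetric and positivity preserving, since the heat semi-group $\exp(s\Delta_{g,\mu})$ is. The sharp Sobolev inequality (\ref{eq:Sobolev}) gives $\norm{f}_{L^q(\mu)}^2 \leq Q[f]$ with $q = \frac{2n}{n-2} > 2$, so the hypotheses of the Generalized CLR inequality are met, with the conjugate exponent $p = \frac{q}{q-2} = \frac{n}{2}$.

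Now fix $\lambda > 0$ and put $W \equiv C_{n,\rho}\lambda + 1$, a positive constant lying in $L^p(\mu)$ since $\mu$ is a probability measure, with $\int_M W^p d\mu = (C_{n,\rho}\lambda + 1)^{n/2}$. Observe that $\lambda_k(M^n,g,\mu) \leq \lambda$ if and only if $\lambda_k(A) = 1 + C_{n,\rho}\lambda_k(-\Delta_{g,\mu}) \leq C_{n,\rho}\lambda + 1 = W$, i.e.\ if and only if $\lambda_k(A - W) \leq 0$; since $A$ has discrete spectrum so does $A - W$, and counting with multiplicity gives $\#\set{\lambda_k(M^n,g,\vol_g) \leq \lambda} = \#\set{\lambda_k(A - W) \leq 0}$. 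Applying the Generalized CLR inequality then yields
\[
\#\set{\lambda_k(M^n,g,\vol_g) \leq \lambda} \leq e^{p}\int_M W^p d\mu = e^{n/2}(C_{n,\rho}\lambda + 1)^{n/2},
\]
which is the claimed estimate.

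The algebraic identities ($q - 2 = \frac{4}{n-2}$, hence $p = \frac{n}{2}$; the spectral shift relating $A - W$ to $-\Delta_{g,\mu} - \lambda$) are routine. The only point genuinely requiring care is the correct identification of $A$ within the CLR framework: checking that $Q$ is the closed Dirichlet form of $A$ on the appropriate domain, that $\exp(-tA)$ is positivity preserving, and — crucially — that the compactness of $M$ makes the constant potential $W$ truly $L^p(\mu)$-integrable, so that the right-hand side is finite and the inequality is not vacuous. Once this bookkeeping is in place the corollary is immediate.
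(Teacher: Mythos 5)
Your proposal is correct and follows essentially the same route as the paper: the paper likewise applies the Levin--Solomyak CLR inequality to $A = -C_{n,\rho}\Delta_g + Id$ on $L^2(\widetilde{\vol_g})$ with the constant potential $W \equiv C_{n,\rho}\lambda + 1$, using the sharp Sobolev inequality (\ref{eq:Sobolev}) to verify the hypothesis with $q = \frac{2n}{n-2}$ and $p = \frac{n}{2}$. Your additional bookkeeping (compactness via Bonnet--Myers, positivity preservation of the semi-group, invariance of the eigenvalues under normalizing the measure) is all consistent with what the paper leaves implicit.
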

\begin{proof}
Apply the generalized CLR inequality to the self-adjoint operator $A = -C_{n,\rho} \Delta_g + Id$ on $L^2(\mu)$, $\mu = \widetilde{\vol_g}$, with $W \equiv  C_{n,\rho} \lambda + 1$. The associated Dirichlet form $Q$ satisfies (\ref{eq:Sobolev}), and so the assertion follows since $p = \frac{n}{2}$ and:
\[
\# \set{ \lambda_k (-\Delta_g) \leq \lambda} = \#  \set{ \lambda_k(A) \leq C_{n,\rho} \lambda + 1} = \# \set{\lambda_k(A - W) \leq 0}  .
\]
\end{proof}

Comparing the latter estimate for $\rho=n-1$ to the eigenvalue distribution (\ref{eq:sphere-concrete}) on the canonical $n$-sphere (having $\Ric_{g_{can}} = (n-1) g_{can}$), we see that for all $\lambda \geq n^2$:
\begin{align*}
& \# \set{ \lambda_k(M^n,g,\vol_g) \leq \lambda } \leq \brac{1-\frac{2}{n}}^{-\frac{n}{2}} (5 e)^{\frac{n}{2}} \frac{\lambda^{n/2}}{n^n} \; , \\
& \# \set{ \lambda_k(S^n,g_{can},\vol_{g_{can}}) \leq \lambda } \geq \frac{\lambda^{n/2}}{n^n} .
\end{align*}
Since our choice of $\rho=n-1$ only influences the scaling of both estimates, we obtain the following:
\begin{cor}
Let $(M^n,g)$ denote a connected $n$-dimensional Riemannian manifold with $\Ric_g \geq \rho g$, $\rho > 0$ and $n \geq 3$. Then:
\[
\lambda_k(M^n,g,\vol_g)  \geq  \brac{1-\frac{2}{n}} \frac{1}{5 e} \lambda_k(S^n,g^\rho_{can},\vol_{g_{can}^\rho}) \;\;\; \forall k \geq 6 (5e)^{\frac{n}{2}} .
\]
In other words, Conjecture 3 holds true (in fact without assuming that $M^n$ is diffeomorphic to $S^n$) for exponentially large (in $n$) eigenvalues, up to a multiplicative numeric constant (independent of $n$). 
\end{cor}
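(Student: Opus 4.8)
The plan is to read off the final Corollary from the two eigenvalue-counting estimates already established in this subsection — the CLR-based upper bound for $(M^n,g,\vol_g)$ in the first Corollary, and the lower bound $(\ref{eq:sphere-concrete})$ for the canonical sphere — by converting inequalities between counting functions into inequalities between individual eigenvalues, after first normalizing the curvature to $\rho=n-1$.

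For the normalization: since $\Ric$ is unchanged under a constant rescaling of the metric while the eigenvalues of the weighted Laplacian scale by the reciprocal of that constant, I would replace $g$ by $\tilde g:=\frac{\rho}{n-1}g$, so that $\Ric_{\tilde g}\geq(n-1)\tilde g$ and the target metric $g^\rho_{can}$ becomes the unscaled $g_{can}$; both sides of the desired inequality transform in the same way, so it suffices to prove the claim for $\rho=n-1$, comparing $(M^n,\tilde g,\vol_{\tilde g})$ with $(S^n,g_{can},\vol_{g_{can}})$. With $\rho=n-1$ one has $C_{n,n-1}=\frac{4}{n(n-2)}$, and since $1\leq\frac{\lambda}{n(n-2)}$ for $\lambda\geq n^2$, the first Corollary yields
\[
N_M(\lambda):=\#\set{\lambda_k(M^n,\tilde g,\vol_{\tilde g})\leq\lambda}\ \leq\ e^{n/2}\brac{\tfrac{5\lambda}{n(n-2)}}^{n/2}=\brac{1-\tfrac{2}{n}}^{-n/2}(5e)^{n/2}\,\frac{\lambda^{n/2}}{n^n}=:C\,\frac{\lambda^{n/2}}{n^n}\quad(\lambda\geq n^2),
\]
the key step being the rewriting $(n(n-2))^{n/2}=n^n(1-\tfrac2n)^{n/2}$, which is where the factor $1-\tfrac2n$ is born; note $C\leq 3\sqrt3\,(5e)^{n/2}<6(5e)^{n/2}$, as $(1-\tfrac2n)^{-n/2}$ decreases from $3\sqrt3$ (at $n=3$) to $e$. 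Dually, $(\ref{eq:sphere-concrete})$ gives $N_S(\lambda):=\#\set{\lambda_k(S^n,g_{can},\vol_{g_{can}})\leq\lambda}\geq\lambda^{n/2}/n^n$ for $\lambda\geq n^2$.

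Now fix $k\geq 6(5e)^{n/2}$. Then $N_M(n^2)\leq C<k$, so $\lambda_k(M^n,\tilde g)\geq n^2$ and the upper counting bound applies at $\lambda=\lambda_k(M^n,\tilde g)$; combined with $k\leq N_M(\lambda_k(M^n,\tilde g))$ this gives $\lambda_k(M^n,\tilde g)\geq k^{2/n}n^2/C^{2/n}$. On the sphere side, $k^{2/n}n^2\geq n^2$, so the lower counting bound gives $N_S(k^{2/n}n^2)\geq k$, hence $\lambda_k(S^n,g_{can})\leq k^{2/n}n^2$. Dividing, $\lambda_k(M^n,\tilde g)/\lambda_k(S^n,g_{can})\geq C^{-2/n}=(1-\tfrac2n)\frac1{5e}$, and undoing the rescaling of the first step produces the stated inequality for general $\rho>0$, with the threshold on $k$ unchanged.

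I do not expect a serious obstacle — the argument is bookkeeping — but two points need care. First, propagating the constants correctly through the CLR estimate (the passage $e^{n/2}(C_{n,n-1}\lambda+1)^{n/2}\leq (5e)^{n/2}\lambda^{n/2}/(n(n-2))^{n/2}$ for $\lambda\geq n^2$, and the subsequent rewriting above). Second, getting the direction and the threshold right when inverting counting functions: an upper bound on $N_M$ becomes a lower bound on $\lambda_k$ and conversely, and one must ensure $\lambda_k\geq n^2$ before invoking either estimate — which is exactly what forces $k\geq 6(5e)^{n/2}$. Since $M$ is compact by Bonnet–Myers and $S^n$ is compact, both spectra are discrete, so $N_M$ and $N_S$ are genuine non-decreasing right-continuous step functions and these elementary inversions are legitimate.
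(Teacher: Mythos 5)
Your proposal is correct and follows the paper's argument exactly: normalize to $\rho=n-1$ by rescaling the metric, combine the CLR-based upper bound on the counting function of $M$ (simplified via $C_{n,n-1}\lambda+1\leq \frac{5\lambda}{n(n-2)}$ for $\lambda\geq n^2$) with the sphere's lower bound (\ref{eq:sphere-concrete}), and invert the two counting functions. The paper leaves the inversion and the verification of the threshold $k\geq 6(5e)^{n/2}$ implicit; your write-up just makes those bookkeeping steps explicit.
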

The latter corollary provides a concrete non-asymptotic version of (\ref{eq:asymptotic-conj3}). The fact that it is dimension independent as soon as $k$ is exponentially large is rather satisfying.

\section{Contractions and Spectrum} \label{sec:contractions}

\subsection{On metric-measure spaces} 

A triplet $(\Omega,d,\mu)$ is called a metric-measure space if $(\Omega,d)$ is a separable metric space and $\mu$ is a locally-finite Borel measure on $(\Omega,d)$.
Throughout this subsection, a local property is one which holds on some open neighborhood of any given point. 
Given a weighted-manifold $(M,g,\mu)$, we will always equip it with its induced geodesic distance $d$, so that $(M,g,\mu)$ constitutes a metric-measure space. 
Finally, note that $T : (\Omega_1,d_1,\mu_1) \rightarrow (\Omega_2,d_2,\mu_2)$ pushes forward $\mu_1$ onto $\mu_2$ iff:
\[
\int f (T x) d\mu_1(x) = \int f(y) d\mu_2(y) ,
\]
for any integrable function $f$ on $(\Omega_2,d_2,\mu_2)$. 

\begin{dfn*}
Let $\F = \F(\Omega,d)$ denote the class of locally Lipschitz functions. Given $f \in \F$, its local Lipschitz constant is defined as the following (Borel measurable) function on $(\Omega,d)$:
\[
\abs{\nabla f}(x) := \limsup_{y \rightarrow x} \frac{\abs{f(y) - f(x)}}{d(y,x)} ~;
\]
(and we define it as 0 if x is an isolated point - see \cite{BobkovHoudreMemoirs} for more details). Note that on $(M,g)$, $\abs{\nabla f}$ clearly coincides with the Riemannian length of the gradient for any $f \in C^1(M)$. 
\end{dfn*}

\begin{dfn*}
Given a Borel measurable function $f$ on $(\Omega,d)$, set:
\[
\norm{f}^2_{\dot{W}^{1,2}_{Lip}(\mu)} = \norm{f}^2_{\dot{W}^{1,2}_{Lip}(\Omega,d,\mu)} := \inf \set{ \int \abs{\nabla g}^2 d\mu \; ; \; g \in \F(\Omega,d) ~,~ g=f \; \text{$\mu$-a.e.} } .
\]
Note that $f \mapsto \norm{f}_{\dot{W}^{1,2}_{Lip}(\mu)}$ is homogeneous, satisfies the triangle inequality, and is invariant to changing $f$ $\mu$-\text{a.e.}; it may be thought of as a homogeneous metric Sobolev semi-norm. 
In addition, if $\mu$ has full support, then clearly $\norm{f}^2_{\dot{W}^{1,2}_{Lip}} = \int \abs{\nabla f}^2 d\mu$ for all $f \in \F(\Omega,d)$, but this may not be the case in general. 
\end{dfn*}

\begin{dfn*}
The metric Sobolev space $W^{1,2}_{Lip}(\mu) = W^{1,2}_{Lip}(\Omega,d,\mu)$ is defined as the subspace of $L^2(\mu)$ consisting of elements $f$ with $\norm{f}_{\dot{W}^{1,2}_{Lip}(\mu)} < \infty$. By the above comments, it is a linear subspace, and inherits the scalar-product structure of $L^2(\mu)$. 
\end{dfn*}

\begin{dfn*}
The $k$-th metric eigenvalue of $(\Omega,d,\mu)$ is defined as:
\begin{align}
\label{eq:Llambda} \Llambda_k = \Llambda_k(\Omega,d,\mu) & := \sup_{\footnotesize \begin{array}{c} E \subset W^{1,2}_{Lip}(\mu) \\\dim E = k-1 \end{array}} \inf_{\footnotesize \begin{array}{c} f \in W^{1,2}_{Lip}(\mu) \cap E^\perp \\ f \neq 0 \end{array}} \frac{\norm{f}^2_{\dot{W}^{1,2}_{Lip}(\mu)}}{\norm{f}^2_{L^2(\mu)}} \\
\nonumber & =  \inf_{\footnotesize \begin{array}{c} F \subset W^{1,2}_{Lip}(\mu) \\\dim F = k \end{array}} \sup_{\footnotesize \begin{array}{c} f  \in F \\ f \neq 0 \end{array}} \frac{\norm{f}^2_{\dot{W}^{1,2}_{Lip}(\mu)}}{\norm{f}^2_{L^2(\mu)}}  ,
\end{align}
where $E,F,E^\perp$ range over linear subspaces of $W^{1,2}_{Lip}(\mu)$, and $0$ is understood as the zero element of $W^{1,2}_{Lip}(\mu)$.
The equivalence between the latter two expressions follows from elementary linear algebra and is standard. 
Clearly $ 0 \leq \Llambda_1 \leq \Llambda_2 \leq \ldots$ is a non-decreasing sequence. 
\end{dfn*}

\begin{prop}[Metric Contraction Principle] \label{prop:metric-contract}
Let $T : (\Omega_1,d_1,\mu_1) \rightarrow (\Omega_2,d_2,\mu_2)$ denote a contraction pushing forward $\mu_1$ onto $\mu_2$ up to a finite constant $c \in (0,\infty)$. Then:
\[
\Llambda_k(\Omega_2,d_2,\mu_2) \geq \Llambda_k(\Omega_1,d_1,\mu_1) \;\;\; \forall k \geq 1 .
\]
\end{prop}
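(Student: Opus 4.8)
The plan is to transfer the variational definition \eqref{eq:Llambda} of $\Llambda_k$ along $T$ by pulling back test functions. The key observation is that if $f \in \F(\Omega_2,d_2)$ is locally Lipschitz, then the composition $f \circ T : \Omega_1 \to \Real$ is locally Lipschitz as well (since $T$ is a contraction, hence continuous, and the composition of a locally Lipschitz map with a globally $1$-Lipschitz map is locally Lipschitz), and moreover the chain-rule-type inequality
\[
\abs{\nabla (f \circ T)}(x) \leq \abs{\nabla f}(T(x)) \;\;\; \text{for all } x \in \Omega_1
\]
holds pointwise; this follows directly from the definition of the local Lipschitz constant together with $d_2(T(x),T(y)) \leq d_1(x,y)$. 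Combined with the push-forward identity $\int h \circ T \, d\mu_1 = c \int h \, d\mu_2$ applied to $h = \abs{f}^2$ and (using the gradient inequality) to bound $\int \abs{\nabla(f\circ T)}^2 d\mu_1 \leq \int \abs{\nabla f}^2 \circ T \, d\mu_1 = c \int \abs{\nabla f}^2 d\mu_2$, this shows that $T^*(f) := f \circ T$ maps $W^{1,2}_{Lip}(\Omega_2,d_2,\mu_2)$ into $W^{1,2}_{Lip}(\Omega_1,d_1,\mu_1)$ and satisfies the Rayleigh-quotient domination
\[
\frac{\int \abs{\nabla (f\circ T)}^2 d\mu_1}{\int \abs{f \circ T}^2 d\mu_1} \leq \frac{\int \abs{\nabla f}^2 d\mu_2}{\int \abs{f}^2 d\mu_2}
\]
whenever $f \circ T \neq 0$ in $L^2(\mu_1)$.

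Next I would feed this into the min-max formula. Using the second (``$\inf$-$\sup$'') expression in \eqref{eq:Llambda} for $\Llambda_k(\Omega_1,d_1,\mu_1)$ would be the cleanest: given any $k$-dimensional subspace $F \subset W^{1,2}_{Lip}(\Omega_2,d_2,\mu_2)$ realizing (up to $\eps$) the infimum defining $\Llambda_k(\Omega_2,\ldots)$, push it forward to $T^*(F) = \set{f \circ T : f \in F} \subset W^{1,2}_{Lip}(\Omega_1,d_1,\mu_1)$. Provided $T^*$ is injective on $F$, this is again a $k$-dimensional subspace, and by the Rayleigh domination above every nonzero element of $T^*(F)$ has Rayleigh quotient (w.r.t. $\mu_1$) at most $\sup_{f \in F} \int\abs{\nabla f}^2 d\mu_2 / \int\abs{f}^2 d\mu_2$. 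Hence $\Llambda_k(\Omega_1,d_1,\mu_1) \leq \sup_{f \in F}(\ldots) \leq \Llambda_k(\Omega_2,d_2,\mu_2) + \eps$, and letting $\eps \to 0$ gives the claim.

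The main obstacle is precisely the injectivity of $T^*$ on the chosen subspace $F$ — $T$ is not assumed injective, so a priori $f \circ T$ could vanish $\mu_1$-a.e. for some nonzero $f \in F$, or two distinct $f$'s could have the same pullback. The fix is to note that by the push-forward relation, $\int \abs{f \circ T}^2 d\mu_1 = c \int \abs{f}^2 d\mu_2$, so $f \circ T = 0$ in $L^2(\mu_1)$ if and only if $f = 0$ in $L^2(\mu_2)$; thus $T^*$ is automatically injective as a map between the $L^2$ spaces, and in particular on any subspace of $W^{1,2}_{Lip}(\Omega_2,\ldots)$ viewed inside $L^2(\mu_2)$. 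One should be slightly careful that elements of $W^{1,2}_{Lip}$ are genuine functions (not equivalence classes) while the dimension count and the orthogonality are taken in $L^2$; the standard convention (as in the definition of $\Llambda_k$) is that $\dim F$ refers to the dimension of the image in $L^2(\mu)$, and with that reading the argument goes through verbatim. A symmetric remark handles the ``$\sup$-$\inf$'' form if one prefers it. I would also record explicitly that $T^*$ is linear and that the case $\int \abs f^2 d\mu_2 = 0$ is excluded in the quotient, so no division-by-zero issue arises. With these points addressed the proof is a short and routine application of the min-max principle.
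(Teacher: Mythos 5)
Your proposal is correct and follows essentially the same route as the paper: pull back a $k$-dimensional test subspace via $T^*f = f\circ T$, use the pointwise bound $\abs{\nabla(f\circ T)}(x)\leq\abs{\nabla f}(T(x))$ together with the push-forward identity to dominate Rayleigh quotients, and conclude by the inf--sup form of the min-max. Your explicit justification that $T^*$ is injective on $L^2$ (via $\int\abs{f\circ T}^2 d\mu_1 = c\int\abs{f}^2 d\mu_2$), hence dimension-preserving, is exactly the point the paper glosses over when asserting that the pulled-back subspace is ``isomorphic'' to the original.
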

\noindent
Note that no injectivity is assumed above, nor do we assume that $\mu_i$ have full supports. 
\begin{proof}
Given $g \in  \F(\Omega_2,d_2)$, the contraction property clearly implies that $g \circ T \in \F(\Omega_1,d_1)$ and:
\[
\abs{\nabla (g \circ T)}(x) \leq \abs{\nabla g}( T(x)) \;\;\; \forall x \in \Omega_1 . 
\]
Consequently:
\[
\int \abs{\nabla (g \circ T)}^2(x) d\mu_1(x) \leq \int \abs{\nabla g}^2(T x) d\mu_1(x) = c \int \abs{\nabla g}^2(y) d\mu_2(y) .
\]
Taking infimum over all $g \in \F(\Omega_2,d_2)$ so that $g =f$ $\mu_2$-a.e. (and hence $g \circ T = f \circ T$ $\mu_1$-a.e.), we obtain for a given Borel measurable $f$ on $(\Omega_2,d_2)$:
\[
\norm{f \circ T}^2_{\dot{W}^{1,2}_{Lip}(\Omega_1,d_1,\mu_1)} \leq \inf_{g \text{ as above}} \int \abs{\nabla (g \circ T)}^2(x) d\mu_1(x) \leq c \norm{f }^2_{\dot{W}^{1,2}_{Lip}(\Omega_2,d_2,\mu_2)} .
\]
In addition, obviously:
\[
\norm{f \circ T}^2_{L^2(\mu_1)} =  c \norm{f}^2_{L^2(\mu_2)} .
\]

Now given a linear subspace $F_2$ of $W^{1,2}_{Lip}(\Omega_2,d_2,\mu_2)$ of dimension $k$, consider its pull-back:
\[
T^* F_2 := \set{ f \circ T \; ; \; f \in F_2 } .
\]
By the above reasoning, $T^* F_2$ is a linear subspace of $W^{1,2}_{Lip}(\Omega_1,d_1,\mu_1)$, and we have for all $0 \neq f \in F_2$:
\[
\frac{\norm{f \circ T}^2_{\dot{W}^{1,2}_{Lip}(\Omega_1,d_1,\mu_1)}}{\norm{f\circ T}^2_{L^2(\mu_1)}} \leq  
\frac{\norm{f }^2_{\dot{W}^{1,2}_{Lip}(\Omega_2,d_2,\mu_2)}}{\norm{f}^2_{L^2(\mu_2)}} .
\]
It remains to note that $F_2$ and $T^* F_2$ are in fact linearly isomorphic, and hence have the same dimension. Indeed, $f_1,\ldots,f_k$ is a basis of $F_2$ if and only if $f_1 \circ T , \ldots, f_k \circ T$ is a basis of $T^* F_2$, since $\sum_{i=1}^k c_i f_i = 0$ in $L^2(\mu_2)$ if and only if $\sum_{i=1}^k c_i f_i \circ T = 0$ in $L^2(\mu_1)$ (as $T$ pushes forward $\mu_1$ onto $\mu_2$ up to a finite constant). Note that the latter property would be false without identifying functions in $W^{1,2}_{Lip}(\mu)$ which coincide $\mu$-a.e..

Finally, taking supremum over all $0 \neq f \in F_2$, followed by an infimum over all $F_2$ as above, we obtain: 
\begin{align*}
\Llambda_k(\Omega_2,d_2,\mu_2) & = \inf_{\dim F_2 = k} \sup_{0 \neq f \in F_2} \frac{\norm{f }^2_{\dot{W}^{1,2}_{Lip}(\Omega_2,d_2,\mu_2)}}{\norm{f}^2_{L^2(\mu_2)}} \\
& \geq \inf_{\dim F_2 = k} \sup_{0 \neq h \in T^* F_2} \frac{\norm{h }^2_{\dot{W}^{1,2}_{Lip}(\Omega_1,d_1,\mu_1)}}{\norm{h}^2_{L^2(\mu_1)}} \\
& \geq \inf_{\dim F_1 = k} \sup_{0 \neq h \in F_1} \frac{\norm{h }^2_{\dot{W}^{1,2}_{Lip}(\Omega_1,d_1,\mu_1)}}{\norm{h}^2_{L^2(\mu_1)}}  = \Llambda_k(\Omega_1,d_1,\mu_1) ,
\end{align*}
where $F_1$ ranges over all $k$-dimensional linear subspaces of $W^{1,2}_{Lip}(\Omega_1,d_1,\mu_1)$. Note that the last inequality above may be strict if $T$ is not injective, or even if it is injective but $T^{-1}$ is not Lipschitz. This concludes the proof. 
\end{proof}

\subsection{On complete weighted-manifolds} \label{subsec:manifold-contraction}

Let $(M^n,g,\mu)$ denote a weighted (complete) Riemannian manifold without boundary, and let $\Delta_{g,\mu}$ denote the associated weighted Laplacian. Recall (e.g. \cite[Proposition 3.2.1]{BGL-Book}) 
that the completeness ensures that the linear operator $-\Delta_{g,\mu}$ on $L^2(\mu)$ is positive semi-definite and essentially self-adjoint when acting on the dense domain of compactly supported smooth functions $C^\infty_c(M)$. Consequently, its graph-closure is its unique self-adjoint extension, and we continue to denote the resulting positive semi-definite self-adjoint operator by $-\Delta_{g,\mu}$, with corresponding domain $Dom(\Delta_{g,\mu})$. 

By the spectral theory of self-adjoint operators (e.g. \cite{Davies-SpectralTheoryBook}),
the spectrum $\sigma(-\Delta_{g,\mu})$ is a subset of $[0,\infty)$. Furthermore, there exists a spectral decomposition of identity, i.e. a family $\set{E_\lambda}_{\lambda \in [0,\infty)}$ of orthogonal projections on $L^2(\mu)$ so that for all $f \in Dom(\Delta_{g,\mu})$:
\[
-\Delta_{g,\mu} f  = \int_0^\infty \lambda \; dE_\lambda f \;\; ;
\]
we refer to \cite{Davies-SpectralTheoryBook} for further details. If the spectrum is discrete, i.e. $dim(Im(E_\lambda)) < \infty$ for all $\lambda \in [0,\infty)$, then it is composed of isolated eigenvalues of finite multiplicity; we denote these by $\lambda_k = \lambda_k(M^n,g,\mu)$, and arrange them in non-decreasing order (repeated by multiplicity) 
$0 \leq \lambda_1 \leq \lambda_2 \leq \ldots $. More generally, we define:
\[
\lambda_k(M,g,\mu) := \sup \set{ \lambda \in \Real \; ; \; dim(Im(E_\lambda)) < k } .
\]
Note that the spectrum is discrete if and only if $k \mapsto \lambda_k$ increases to infinity. 

\medskip
The following is essentially standard; for completeness, we provide a proof. 

\begin{prop} \label{prop:same-spectrum}
If $(M,g,\mu)$ is a complete weighted-manifold without boundary, then:
\[
 \lambda_k(M,g,\mu) = \Llambda_k(M,g,\mu)\;\;\; \forall k \geq 1 .
\]
\end{prop}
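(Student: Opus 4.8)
The plan is to show the two variational quantities agree by establishing two inequalities, using the min-max (Courant–Fischer) characterization of $\lambda_k$ on one side and the definition of $\Llambda_k$ on the other. The key point is that $C_c^\infty(M)$ is a core for the self-adjoint operator $-\Delta_{g,\mu}$ (by completeness and essential self-adjointness, as recalled in Subsection \ref{subsec:manifold-contraction}), and that the associated Dirichlet (quadratic) form $\mathcal{E}[f] = \int_M \abs{\nabla_g f}^2 d\mu$ has form-domain $Dom(\mathcal{E})$ equal to the closure of $C_c^\infty(M)$ under the norm $(\int \abs{f}^2 d\mu + \int \abs{\nabla_g f}^2 d\mu)^{1/2}$. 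By the spectral theorem, $\lambda_k$ admits the min-max representation
\[
\lambda_k(M,g,\mu) = \inf_{\substack{F \subset Dom(\mathcal{E}) \\ \dim F = k}} \sup_{\substack{f \in F \\ f \neq 0}} \frac{\mathcal{E}[f]}{\int \abs{f}^2 d\mu},
\]
valid whether or not the spectrum below $\lambda_k$ is discrete (this is the standard extension of Courant–Fischer to the bottom of the essential spectrum, cf. \cite{Davies-SpectralTheoryBook}).

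For the inequality $\lambda_k \geq \Llambda_k$: I would observe that $C_c^\infty(M) \subset W^{1,2}_{Lip}(M,d,\mu)$, since a compactly supported smooth function on a Riemannian manifold is globally Lipschitz and its metric local Lipschitz constant coincides with $\abs{\nabla_g f}$ (noted in the definition of $\abs{\nabla f}$). Hence any finite-dimensional $F \subset C_c^\infty(M)$ is an admissible competitor in the infimum defining $\Llambda_k$, and the Rayleigh quotients match exactly. Taking the infimum over such $F$ shows $\Llambda_k \leq \inf\{\ldots : F \subset C_c^\infty(M)\}$. It then remains to check that restricting the min-max for $\lambda_k$ to subspaces $F \subset C_c^\infty(M)$ (rather than all of $Dom(\mathcal{E})$) does not change the value: this follows from the core property, since any $k$-dimensional subspace of $Dom(\mathcal{E})$ can be approximated (in the form norm, hence in Rayleigh quotient, on the finite-dimensional space where all norms are equivalent) by a $k$-dimensional subspace of $C_c^\infty(M)$. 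Thus $\Llambda_k \leq \lambda_k$.

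For the reverse inequality $\Llambda_k \geq \lambda_k$: here I must show that an arbitrary $k$-dimensional subspace $F \subset W^{1,2}_{Lip}(M,d,\mu)$ is (or can be replaced by) an admissible competitor for the form min-max of $\lambda_k$, i.e. that $W^{1,2}_{Lip}(M,d,\mu) \subset Dom(\mathcal{E})$ with $\int \abs{\nabla f}^2 d\mu = \mathcal{E}[f]$ for $f \in W^{1,2}_{Lip}$. The content here is a density/identification statement: a locally Lipschitz $L^2(\mu)$ function with $\abs{\nabla f} \in L^2(\mu)$ lies in the form domain, and its weak gradient has the same $L^2$-norm as its metric gradient. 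I expect this to be the main obstacle, and I would handle it by a cutoff-and-mollify argument: multiply $f$ by a sequence of Lipschitz cutoffs $\chi_R$ with $\abs{\nabla \chi_R}$ bounded and $\chi_R \uparrow 1$ (available since $(M,g)$ is complete, taking $\chi_R$ a function of the distance to a fixed point), so that $\chi_R f$ is compactly supported Lipschitz with $\int \abs{\nabla(\chi_R f)}^2 d\mu \to \int \abs{\nabla f}^2 d\mu$ by dominated convergence; then mollify each $\chi_R f$ (e.g.\ via the heat semigroup $P_t$ or a partition-of-unity local convolution) to land in $C_c^\infty(M)$ while controlling the form norm, using that mollification does not increase the Dirichlet energy. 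This produces a sequence in $C_c^\infty(M)$ converging to $f$ in form norm; applied to a basis of $F$ it yields a nearby $k$-dimensional subspace of $C_c^\infty(M)$ with Rayleigh quotients arbitrarily close, whence $\Llambda_k \geq \lambda_k$. Combining the two inequalities gives the claimed equality.
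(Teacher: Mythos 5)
Your proposal is correct and takes essentially the same route as the paper: both reduce the statement to the fact that $C_c^\infty(M)$ is a core for the Dirichlet form (so the min-max over $C_c^\infty(M)$, over the form domain $W^{1,2}(M,g,\mu)$, and over $W^{1,2}_{Lip}(M,g,\mu)$ all coincide), with the inclusion $W^{1,2}_{Lip} \subset \overline{C_c^\infty(M)}$ handled exactly as you describe, via completeness-based cutoffs with small gradient followed by mollification. No gaps.
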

\begin{proof}
The essential self-adjointness of $\Delta_{g,\mu}$ on $C_c^\infty(M)$ implies that the latter space is dense in $\D = Dom(\Delta_{g,\mu})$ in the graph norm $\norm{f}_{\Gamma}^2 = \norm{f}_{L^2(\mu)}^2 + \norm{\Delta_{g,\mu} f}_{L^2(\mu)}^2$. Consequently, by the Courant--Fischer min-max theorem \cite{Davies-SpectralTheoryBook}, density, and integration by parts:
\begin{align*}
\lambda_k(M,g,\mu) & = \inf_{\footnotesize \begin{array}{c} F \subset \D \\ \dim F = k \end{array}} \sup_{\footnotesize \begin{array}{c} f  \in F \\ f \neq 0 \end{array}} \frac{\scalar{-\Delta_{g,\mu} f , f}}{\scalar{f,f}} 
=  \inf_{\footnotesize \begin{array}{c} F \subset C_c^\infty(M) \\ \dim F = k \end{array}} \sup_{\footnotesize \begin{array}{c} f  \in F \\ f \neq 0 \end{array}} \frac{\scalar{-\Delta_{g,\mu} f , f}}{\scalar{f,f}} \\
& = \inf_{\footnotesize \begin{array}{c} F \subset C_c^\infty(M) \\ \dim F = k \end{array}} \sup_{\footnotesize \begin{array}{c} f  \in F \\ f \neq 0 \end{array}}  \frac{\int \abs{\nabla f}^2 d\mu}{\int \abs{f}^2 d\mu} = \inf_{\footnotesize \begin{array}{c} F \subset W^{1,2}(M,g,\mu) \\ \dim F = k \end{array}} \sup_{\footnotesize \begin{array}{c} f  \in F \\ f \neq 0 \end{array}}  \frac{\int \abs{\nabla f}^2 d\mu}{\int \abs{f}^2 d\mu} .
\end{align*}
Here $W^{1,2}(M,g,\mu)$ denotes the completion of $C^\infty_c(M)$ in the $W^{1,2}(M,g,\mu)$ norm:
\[
\norm{f}^2_{W^{1,2}(M,g,\mu)} := \int_M\abs{f}^2 d\mu + \int_M \abs{\nabla f}^2 d\mu .
\]
On the other hand, since $\mu$ has full support on $M$, we have:
\[
\Llambda_k(M,g,\mu) = \inf_{\footnotesize \begin{array}{c} F \subset W^{1,2}_{Lip}(M,g,\mu) \\ \dim F = k \end{array}} \sup_{\footnotesize \begin{array}{c} f  \in F \\ f \neq 0 \end{array}}  \frac{\int \abs{\nabla f}^2 d\mu}{\int \abs{f}^2 d\mu} .
\]
Clearly $C_c^\infty(M) \subset W^{1,2}_{Lip}(M,g,\mu)$, 
and so to establish the assertion, it remains to show that $W^{1,2}_{Lip}(M,g,\mu) \subset W^{1,2}(M,g,\mu)$, or equivalently, that $C_c^\infty(M)$ is dense in $W^{1,2}_{Lip}(M,g,\mu)$ (in the $W^{1,2}(M,g,\mu)$ norm). But these two equivalent statements are well-known and standard. The first may be seen by the known identification of $W^{1,2}(M,g,\mu)$ as the space of weakly differentiable functions with finite $W^{1,2}(M,g,\mu)$ norm \cite{Taylor-PDE-II-Book}, and the fact that locally Lipschitz functions are differentiable almost-everywhere by Rademacher's theorem. Alternatively, the second statement is easily seen as follows: the completeness of $(M,g)$ implies the existence of a family of functions $\varphi_m \in C_c^\infty(M)$  with $\norm{\abs{\nabla \varphi_m}}_{L^\infty}\leq 1/m$ so that $\varphi_m \rightarrow_{m \rightarrow \infty} 1$ pointwise on $M$ \cite{BGL-Book}; consequently, given $f \in W^{1,2}_{Lip}(M,g,\mu)$, $f \varphi_m \rightarrow f$ in $W^{1,2}(M,g,\mu)$,
and so it is enough to establish that any compactly supported function in $W^{1,2}_{Lip}(M,g,\mu)$ may be approximated by functions in $C_c^\infty(M)$; but the latter is standard, using a partition of unity and a mollification argument. This concludes the proof. 
\end{proof}

Propositions \ref{prop:metric-contract} and \ref{prop:same-spectrum} immediately yield:
\begin{cor}
Let $T : (M_1,g_1,\mu_1) \rightarrow (M_2,g_2,\mu_2)$ denote a contraction between two weighted-manifolds pushing-forward $\mu_1$ onto $\mu_2$ up to a finite constant. Then:
\[
\lambda_k(M_2,g_2,\mu_2) \geq \lambda_k(M_1,g_1,\mu_1) \;\;\; \forall k \geq 1. 
\]
In particular, if $\Delta_{g_1,\mu_1}$ has discrete spectrum, then so does $\Delta_{g_2,\mu_2}$.
\end{cor}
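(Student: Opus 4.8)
The plan is to simply chain together the two preceding propositions. First I would equip each weighted-manifold $(M_i,g_i,\mu_i)$ with its induced geodesic distance $d_i$; since $(M_i,g_i)$ is a complete Riemannian manifold, $(M_i,d_i)$ is a separable metric space and $\mu_i$ is a locally-finite Borel measure, so $(M_i,d_i,\mu_i)$ is a bona fide metric-measure space. The hypotheses of Proposition~\ref{prop:metric-contract} are then met: $T$ is a contraction (a Riemannian contraction does not increase the length of curves, hence does not increase geodesic distances, so it is a contraction of the $d_i$ verbatim) and pushes forward $\mu_1$ onto $\mu_2$ up to a finite constant by assumption. The Metric Contraction Principle therefore gives $\Llambda_k(M_2,g_2,\mu_2) \geq \Llambda_k(M_1,g_1,\mu_1)$ for all $k \geq 1$.

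Next I would invoke Proposition~\ref{prop:same-spectrum}, whose hypotheses (completeness, no boundary) hold for both $(M_i,g_i,\mu_i)$, to identify the analytic eigenvalues with the metric ones, $\lambda_k(M_i,g_i,\mu_i) = \Llambda_k(M_i,g_i,\mu_i)$ for all $k$ and $i=1,2$. Combining the two displays yields
\[
\lambda_k(M_2,g_2,\mu_2) = \Llambda_k(M_2,g_2,\mu_2) \geq \Llambda_k(M_1,g_1,\mu_1) = \lambda_k(M_1,g_1,\mu_1),
\]
which is the asserted inequality for all $k \geq 1$.

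For the ``in particular'' clause I would recall, as observed in Subsection~\ref{subsec:manifold-contraction}, that the spectrum of $-\Delta_{g,\mu}$ is discrete if and only if $k \mapsto \lambda_k$ increases to infinity. If $\Delta_{g_1,\mu_1}$ has discrete spectrum then $\lambda_k(M_1,g_1,\mu_1) \to \infty$ as $k \to \infty$; by the inequality just proved $\lambda_k(M_2,g_2,\mu_2) \geq \lambda_k(M_1,g_1,\mu_1) \to \infty$, and hence $\Delta_{g_2,\mu_2}$ has discrete spectrum as well.

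Since both ingredients are already established in the excerpt, there is essentially no serious obstacle here; the only point warranting a moment's care is checking that the passage to the geodesic-metric picture is legitimate — separability and local finiteness of the data, and the (immediate) fact that a Riemannian contraction coincides with a contraction of the induced geodesic distances — so that Proposition~\ref{prop:metric-contract} applies directly. Note also that injectivity of $T$ is not needed at any step.
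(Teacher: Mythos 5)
Your proof is correct and is precisely the paper's argument: the corollary is stated there as an immediate consequence of Propositions \ref{prop:metric-contract} and \ref{prop:same-spectrum}, chained exactly as you do, with the ``in particular'' clause following from the equivalence between discreteness of the spectrum and $\lambda_k \to \infty$. No issues.
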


\subsection{Lipschitz maps}

The above results trivially extend to globally Lipschitz maps $T : (\Omega_1,d_1) \rightarrow (\Omega_2,d_2)$. Indeed, if the map $T$ is $L$-Lipschitz, then by scaling either of the metrics by a factor of $L$, the map $T$ becomes a contraction. Since the (metric) eigenvalues clearly scale by a factor of $1/L^2$, we immediately obtain that if in addition $T$ pushes forward $\mu_1$ onto $\mu_2$ up to a finite constant, then:
\[
\Llambda_k(\Omega_2,d_2,\mu_2) \geq \frac{1}{L^2} \Llambda_k(\Omega_1,d_1,\mu_1) \;\;\; \forall k \geq 1 ,
\]
yielding the Contraction Principle (Theorem \ref{thm:intro-Lipschitz}) from the Introduction. 

\subsection{Extensions to compact weighted-manifolds with boundary} \label{subsec:boundary}

While for simplicity we generally avoid manifolds with boundary in this work, we comment here that it is a standard exercise to extend the previous results to eigenvalues of the weighted Laplacian with either Dirichlet or Neumann boundary conditions on compact weighted-manifolds with boundary.

To treat Dirichlet boundary conditions from the purely metric-measure space view-point, we proceed as follows. Given a Borel subset $\Lambda \subset \Omega$, denote by $\F_\Lambda(\Omega,d)$ the subspace of functions in $\F(\Omega,d)$ which vanish on $\Lambda$. Replacing $\F(\Omega,d)$ by $\F_{\Lambda}(\Omega,d)$ in the definition of $\snorm{\cdot}_{\dot{W}^{1,2}_{Lip}(\Omega,d,\mu)}$, we obtain $\snorm{\cdot}_{\dot{W}^{1,2}_{Lip,\Lambda}(\Omega,d,\mu)}$. Similarly, $W^{1,2}_{Lip,\Lambda}(\Omega,d,\mu)$ is defined as the linear subspace of $L^2(\mu)$ consisting of elements $f$ with $\snorm{f}_{\dot{W}^{1,2}_{Lip,\Lambda}(\Omega,d,\mu)}<\infty$. 
 The metric eigenvalues $\lambda_k^{Lip,\Lambda}(\Omega,d,\mu)$ are defined as in (\ref{eq:Llambda}) with $W^{1,2}_{Lip,\Lambda}(\Omega,d,\mu)$ and $\snorm{\cdot}_{\dot{W}^{1,2}_{Lip,\Lambda}(\Omega,d,\mu)}$ replacing $W^{1,2}_{Lip}(\Omega,d,\mu)$ and $\snorm{\cdot}_{\dot{W}^{1,2}_{Lip}(\Omega,d,\mu)}$, respectively. Repeating verbatim the proof of Proposition \ref{prop:metric-contract}, we have:

\begin{prop}[Metric Contraction Principle with Dirichlet Conditions]
If $T : (\Omega_1,d_1,\mu_1) \rightarrow (\Omega_2,d_2,\mu_2)$ is a contraction pushing forward $\mu_1$ onto $\mu_2$ up to a finite constant, then for every Borel subset $\Lambda_2 \subset \Omega_2$, we have:
\[
\lambda_k^{Lip,\Lambda_2}(\Omega_2,d_2,\mu_2) \geq \lambda_k^{Lip,T^{-1}(\Lambda_2)}(\Omega_1,d_1,\mu_1)  \;\;\; \forall k \geq 1 .
\]
\end{prop}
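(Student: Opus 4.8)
The statement to prove is the \emph{Metric Contraction Principle with Dirichlet Conditions}: the last displayed inequality, asserting $\lambda_k^{Lip,\Lambda_2}(\Omega_2,d_2,\mu_2) \geq \lambda_k^{Lip,T^{-1}(\Lambda_2)}(\Omega_1,d_1,\mu_1)$ for every Borel $\Lambda_2 \subset \Omega_2$, under the hypothesis that $T$ is a contraction pushing $\mu_1$ onto $\mu_2$ up to a finite constant $c$.

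\begin{proof}[Proof plan]
The plan is to mimic the proof of Proposition \ref{prop:metric-contract} essentially word for word, checking only that the pull-back construction respects the extra vanishing constraint. First I would fix a $k$-dimensional linear subspace $F_2 \subset W^{1,2}_{Lip,\Lambda_2}(\Omega_2,d_2,\mu_2)$ and form its pull-back $T^* F_2 = \set{f \; ; \; f \circ T \in F_2}$. As in Proposition \ref{prop:metric-contract}, the map $f \mapsto f\circ T$ is a linear isomorphism from $F_2$ onto $T^* F_2$ (injectivity because $T$ pushes $\mu_1$ onto $c\mu_2$, so $f\circ T = 0$ in $L^2(\mu_1)$ forces $f=0$ in $L^2(\mu_2)$), hence $\dim T^* F_2 = k$. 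The contraction property gives $\abs{\nabla(f\circ T)}(x) \leq \abs{\nabla f}(T(x))$ pointwise, and the push-forward identity turns both the Dirichlet energy and the $L^2$ norm of $f\circ T$ into $c$ times those of $f$; so $T^* F_2 \subset W^{1,2}_{Lip}(\Omega_1,d_1,\mu_1)$ and the Rayleigh quotient of $f\circ T$ is bounded by that of $f$ for every nonzero $f\in F_2$.

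The one new point is the vanishing constraint: I need $T^* F_2 \subset W^{1,2}_{Lip,T^{-1}(\Lambda_2)}(\Omega_1,d_1,\mu_1)$, i.e.\ that every $f\circ T$ with $f\in F_2$ vanishes on $T^{-1}(\Lambda_2)$. This is immediate and set-theoretic: if $x\in T^{-1}(\Lambda_2)$ then $T(x)\in\Lambda_2$, and since $f\in W^{1,2}_{Lip,\Lambda_2}$ vanishes on $\Lambda_2$, we get $(f\circ T)(x) = f(T(x)) = 0$. No measure-theoretic subtlety arises because the definition of the subspace asks for genuine (everywhere) vanishing on the prescribed set. Thus $T^* F_2$ is an admissible $k$-dimensional competitor in the min-max formula for $\lambda_k^{Lip,T^{-1}(\Lambda_2)}(\Omega_1,d_1,\mu_1)$.

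With this in hand, the chain of inequalities is identical to the one in Proposition \ref{prop:metric-contract}: take the supremum of the Rayleigh quotient over $0\neq f\in F_2$, use the pointwise domination to replace it by the supremum over $0\neq h\in T^* F_2$ of the corresponding quotient on $\Omega_1$, bound this below by the infimum over all admissible $k$-dimensional $F_1 \subset W^{1,2}_{Lip,T^{-1}(\Lambda_2)}(\Omega_1,d_1,\mu_1)$ (legitimate since $T^* F_2$ is one such $F_1$), which is exactly $\lambda_k^{Lip,T^{-1}(\Lambda_2)}(\Omega_1,d_1,\mu_1)$; finally take the infimum over all admissible $F_2$ to obtain $\lambda_k^{Lip,\Lambda_2}(\Omega_2,d_2,\mu_2)$ on the left. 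There is essentially no obstacle here — the proof is genuinely a verbatim repetition of Proposition \ref{prop:metric-contract} plus the trivial observation that $T(T^{-1}(\Lambda_2))\subseteq \Lambda_2$ carries the vanishing condition across; the only thing worth stating explicitly is why the pull-back preserves dimension without an injectivity assumption on $T$, which (as in the unconstrained case) follows from the push-forward hypothesis rather than from injectivity.
\end{proof}
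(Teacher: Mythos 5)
Your proposal is correct and is essentially the paper's own proof: the paper simply states that one repeats the proof of Proposition \ref{prop:metric-contract} verbatim, and the only genuinely new point — that $f\in W^{1,2}_{Lip,\Lambda_2}$ forces $f\circ T$ to vanish on $T^{-1}(\Lambda_2)$, so the pulled-back subspace is an admissible competitor — is exactly the set-theoretic observation you make. Your remark that the dimension of the pull-back is preserved via the push-forward hypothesis rather than injectivity of $T$ also matches the unconstrained argument in the paper.
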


Returning to the weighted-manifold setting, it is well-known that when $M$ is compact with smooth boundary,  
$-\Delta_{g,\mu}$ is positive semi-definite and essentially self-adjoint on both $C_0^\infty(M)$ and $C_\nu^\infty(M)$, the spaces of smooth functions $f$ with $f|_{\partial M} = 0$ and $f_\nu|_{\partial M} = 0$, respectively, where $f_\nu$ denotes the partial derivative in the direction orthogonal to the boundary. Denote by $-\Delta_{g,\mu}^0$ and $-\Delta_{g,\mu}^\nu$ the corresponding (Dirichlet and Neumann, respectively) self-adjoint extensions, and by $\lambda_k^0(M,g,\mu)$ and $\lambda_k(M,g,\mu)$ their corresponding eigenvalues.
Repeating the proof of Proposition \ref{prop:same-spectrum}, we obtain:

\begin{prop} \label{prop:same-eigen-boundary}
If $(M,g,\mu)$ is a compact weighted-manifold with smooth boundary, then:
\[
 \lambda_k^0(M,g,\mu) = \lambda^{Lip,\partial M}_k(M,g,\mu)\;\;\; \forall k \geq 1 \; ,
\]
and:
\[
\lambda_k(M,g,\mu) = \lambda^{Lip}_k(M,g,\mu)\;\;\; \forall k \geq 1 \; . 
\]
\end{prop}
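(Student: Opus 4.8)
The plan is to follow the proof of Proposition~\ref{prop:same-spectrum} almost verbatim, replacing the dense core $C_c^\infty(M)$ by $C_0^\infty(M)$ in the Dirichlet case and by $C_\nu^\infty(M)$ in the Neumann case, and supplying the two boundary-approximation lemmas that take the place of the simple compact-support truncation used there. Since $M$ is compact, the spectra of $-\Delta_{g,\mu}^0$ and $-\Delta_{g,\mu}^\nu$ are discrete, so the Courant--Fischer min-max theorem applies for all $k\geq 1$. First, the stated essential self-adjointness of $-\Delta_{g,\mu}$ on $C_0^\infty(M)$ (resp. $C_\nu^\infty(M)$) means that this space is dense in $Dom(\Delta_{g,\mu}^0)$ (resp. $Dom(\Delta_{g,\mu}^\nu)$) in the graph norm, and Green's formula $\scalar{-\Delta_{g,\mu}f,f} = \int_M \abs{\nabla f}^2 d\mu - \int_{\partial M} f_\nu f \, d\mu_{\partial M}$ reduces to $\scalar{-\Delta_{g,\mu}f,f} = \int_M \abs{\nabla f}^2 d\mu$ on each of these cores, the boundary term vanishing because $f|_{\partial M}=0$ in the Dirichlet case and $f_\nu|_{\partial M}=0$ in the Neumann case. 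Exactly as in Proposition~\ref{prop:same-spectrum} we therefore obtain
\[
\lambda^0_k(M,g,\mu) = \inf_{\substack{F\subset C_0^\infty(M)\\ \dim F = k}} \sup_{\substack{f\in F\\ f\neq 0}}\frac{\int_M\abs{\nabla f}^2 d\mu}{\int_M\abs{f}^2 d\mu}, \qquad \lambda_k(M,g,\mu) = \inf_{\substack{F\subset C_\nu^\infty(M)\\ \dim F = k}} \sup_{\substack{f\in F\\ f\neq 0}}\frac{\int_M\abs{\nabla f}^2 d\mu}{\int_M\abs{f}^2 d\mu},
\]
and hence the same equalities with $F$ ranging over $k$-dimensional subspaces of the $W^{1,2}(M,g,\mu)$-closures $\overline{C_0^\infty(M)}$ and $\overline{C_\nu^\infty(M)}$, respectively (by the standard continuity of $f\mapsto \int\abs{\nabla f}^2/\int\abs{f}^2$ under $W^{1,2}$-perturbations of a finite-dimensional subspace, as used implicitly in Proposition~\ref{prop:same-spectrum}).

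Since $C_0^\infty(M)\subset W^{1,2}_{Lip,\partial M}(M,g,\mu)$ and $C_\nu^\infty(M)\subset W^{1,2}_{Lip}(M,g,\mu)$ (a smooth function on the compact manifold $\bar M$ is Lipschitz), the desired identities $\lambda^0_k = \lambda^{Lip,\partial M}_k$ and $\lambda_k=\lambda^{Lip}_k$ follow from the sandwiches $C_0^\infty(M)\subset W^{1,2}_{Lip,\partial M}(M,g,\mu)\subset \overline{C_0^\infty(M)}$ and $C_\nu^\infty(M)\subset W^{1,2}_{Lip}(M,g,\mu)\subset \overline{C_\nu^\infty(M)}$, i.e. once we prove the density statements: \emph{(i) every Lipschitz function on $\bar M$ vanishing on $\partial M$ is a $W^{1,2}(M,g,\mu)$-limit of functions in $C_0^\infty(M)$, and (ii) every Lipschitz function on $\bar M$ is a $W^{1,2}(M,g,\mu)$-limit of functions in $C_\nu^\infty(M)$.} For (i): if $f$ is Lipschitz on $\bar M$ with $f|_{\partial M}=0$ then $\abs{f(x)}\leq L\,d(x,\partial M)$ with $L=\norm{\abs{\nabla f}}_{L^\infty}$; multiplying $f$ by a Lipschitz cutoff $\chi_\delta$ that vanishes on $\set{d(\cdot,\partial M)\leq\delta}$, equals $1$ on $\set{d(\cdot,\partial M)\geq 2\delta}$ and has $\abs{\nabla\chi_\delta}\leq 2/\delta$, produces Lipschitz functions supported in the interior compact set $\set{d(\cdot,\partial M)\geq\delta}$ which converge to $f$ in $W^{1,2}(M,g,\mu)$ — on the collar where $\nabla\chi_\delta$ is nonzero one has $\abs{f}\abs{\nabla\chi_\delta}\leq 4L$ while that collar has $\mu$-measure $O(\delta)$ — and then the partition-of-unity-plus-mollification argument from the end of Proposition~\ref{prop:same-spectrum} lands these functions in $C_c^\infty(M\setminus\partial M)\subset C_0^\infty(M)$.

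For (ii), one first approximates an arbitrary Lipschitz $f$ on $\bar M$ by functions in $C^\infty(\bar M)$ in $W^{1,2}(M,g,\mu)$ (the identification of $W^{1,2}$ with the weakly differentiable functions of finite norm, together with smoothness of $\partial M$, exactly as invoked in Proposition~\ref{prop:same-spectrum}), so it suffices to approximate $f\in C^\infty(\bar M)$ by functions in $C_\nu^\infty(M)$. Working in Fermi coordinates $(x',t)$ near $\partial M$, with $t=d(\cdot,\partial M)\in[0,\eps_0)$, replace $f$ on the collar by $f(x',\psi_\delta(t))$, where $\psi_\delta:[0,\infty)\to[0,\infty)$ is smooth and increasing with $\psi_\delta(0)=0$, $\psi_\delta'(0)=0$, $\psi_\delta(t)=t$ for $t\geq\delta$, $\psi_\delta([0,\delta])\subset[0,\delta]$, and $0\leq\psi_\delta'\leq 2$; the resulting function lies in $C^\infty(\bar M)$, has vanishing normal derivative on $\partial M$ (because $\psi_\delta'(0)=0$), agrees with $f$ outside the $\delta$-collar, and hence converges to $f$ in $W^{1,2}(M,g,\mu)$ as $\delta\to0$ (the two functions differ only on $\set{d(\cdot,\partial M)<\delta}$, of $\mu$-measure $O(\delta)$, where both they and their gradients are bounded uniformly in $\delta$), proving (ii).

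I expect Step~(i)--(ii) (the boundary approximations) to be the only part requiring more than transcription of Proposition~\ref{prop:same-spectrum}: the delicate points are arranging the Neumann-enforcing reparametrization $\psi_\delta$ to be globally smooth (flat-to-identity across $t=\delta$, flat-to-zero in the normal derivative at $t=0$) while its $W^{1,2}$ cost is $o(1)$, and, in the Dirichlet case, verifying that the truncated functions $\chi_\delta f$ are genuinely compactly supported in $M\setminus\partial M$ so that the interior mollification of Proposition~\ref{prop:same-spectrum} applies without change. Everything else — the min-max reduction, the vanishing of the Green's-formula boundary term, and the sandwich — is identical to Proposition~\ref{prop:same-spectrum}.
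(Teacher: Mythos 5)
Your proposal is correct and follows the same route as the paper: repeat the min-max reduction of Proposition \ref{prop:same-spectrum} with the cores $C_0^\infty(M)$ and $C_\nu^\infty(M)$, and observe that the only new content is the pair of sandwich inclusions (\ref{eq:bc1}) and (\ref{eq:bc2}). The paper simply cites \cite[Chapter 8]{Taylor-PDE-II-Book} for the nontrivial halves of these inclusions, whereas you supply the standard collar/cutoff and normal-reparametrization constructions explicitly; both are fine.
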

\noindent
Indeed, the only difference with the proof of Proposition \ref{prop:same-spectrum} is in justifying that:
\begin{equation} \label{eq:bc1}
C_0^\infty(M) \subset W^{1,2}_{Lip,\partial M}(M,g,\mu) \subset \overline{C_0^\infty(M)} ,
\end{equation}
and that:
\begin{equation} \label{eq:bc2}
C_\nu^\infty(M) \subset W^{1,2}_{Lip}(M,g,\mu) \subset \overline{C_\nu^\infty(M)} ,
\end{equation}
where the closures are taken in $W^{1,2}(M,g,\mu)$. 
As before, the first inclusions in (\ref{eq:bc1}) and (\ref{eq:bc2}) are immediate, and the second ones follow from well-known arguments \cite[Chapter 8]{Taylor-PDE-II-Book}. In fact, $\overline{C_0^\infty(M)}$ is by definition the well-known space $W^{1,2}_0(M,g,\mu)$, and $\overline{C_\nu^\infty(M)} = W^{1,2}(M,g,\mu)$. 

\medskip

Putting everything together, we obtain:
\begin{thm}[Contraction Principle With Boundary] \label{thm:contract-boundary}
Let $T : (M_1,g_1,\mu_1) \rightarrow (M_2,g_2,\mu_2)$ denote an $L$-Lipschitz map between two complete weighted-manifolds pushing-forward $\mu_1$ onto $\mu_2$ up to a finite constant. If $\partial M_i \neq \emptyset$, assume that $M_i$ is compact with smooth boundary. Then the Neumann eigenvalues satisfy:
\[
\lambda_k(M_2,g_2,\mu_2) \geq \frac{1}{L^2} \lambda_k(M_1,g_1,\mu_1) \;\;\; \forall k \geq 1. 
\]
If in addition $T$ maps $\partial M_1$ onto $\partial M_2$, then the Dirichlet eigenvalues satisfy:
\[
\lambda^0_k(M_2,g_2,\mu_2) \geq \frac{1}{L^2} \lambda^0_k(M_1,g_1,\mu_1) \;\;\; \forall k \geq 1. 
\]
\end{thm}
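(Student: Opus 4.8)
The plan is simply to assemble the pieces already in place: the (Lipschitz) metric contraction principles of Subsection~\ref{subsec:boundary}, together with the identifications of the analytic eigenvalues $\lambda_k$, $\lambda_k^0$ with the metric eigenvalues $\Llambda_k = \lambda^{Lip}_k$, $\lambda^{Lip,\partial M}_k$ furnished by Propositions~\ref{prop:same-spectrum} and~\ref{prop:same-eigen-boundary}. First I would reduce to the case $L=1$: rescaling $g_1$ to $L^2 g_1$ multiplies the geodesic distance $d_1$ by $L$ and the local Lipschitz constant $\abs{\nabla f}$ by $1/L$ (hence every Rayleigh quotient, and thus every metric eigenvalue, by $1/L^2$), while turning $T$ into a contraction; equivalently, one may directly invoke the Lipschitz form of Proposition~\ref{prop:metric-contract} recorded in Subsection~\ref{subsec:boundary}, namely $\Llambda_k(M_2,g_2,\mu_2) \geq \frac{1}{L^2}\Llambda_k(M_1,g_1,\mu_1)$. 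Note that none of the metric contraction statements uses any manifold structure, compactness, or injectivity of $T$, so they apply verbatim regardless of whether the $M_i$ are closed, have smooth boundary, or are of mixed type.

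For the first (interior/Neumann) inequality: in each case the completeness (when $\partial M_i=\emptyset$) or compactness with smooth boundary (when $\partial M_i\neq\emptyset$) guarantees, by Proposition~\ref{prop:same-spectrum} or Proposition~\ref{prop:same-eigen-boundary} respectively, that $\lambda_k(M_i,g_i,\mu_i)=\Llambda_k(M_i,g_i,\mu_i)$ for all $k$. Feeding the Lipschitz metric contraction principle (valid since $T$ pushes $\mu_1$ onto $\mu_2$ up to a finite constant) into these identifications at both ends yields the asserted $\lambda_k(M_2,g_2,\mu_2)\geq\frac{1}{L^2}\lambda_k(M_1,g_1,\mu_1)$.

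For the Dirichlet inequality, assume in addition $T(\partial M_1)=\partial M_2$ (which, to be meaningful, forces both boundaries to be nonempty; otherwise there is nothing to prove). Applying the Lipschitz form of the Metric Contraction Principle with Dirichlet Conditions with the Borel set $\Lambda_2:=\partial M_2$ gives $\lambda_k^{Lip,\partial M_2}(M_2,g_2,\mu_2)\geq\frac{1}{L^2}\lambda_k^{Lip,T^{-1}(\partial M_2)}(M_1,g_1,\mu_1)$. Since $T(\partial M_1)=\partial M_2$ implies $\partial M_1\subset T^{-1}(\partial M_2)$, and since imposing a vanishing condition on a larger Borel set shrinks the admissible space $W^{1,2}_{Lip,\Lambda}$, the min--max formula~(\ref{eq:Llambda}) shows that $\Lambda\mapsto\lambda_k^{Lip,\Lambda}$ is non-decreasing; hence $\lambda_k^{Lip,T^{-1}(\partial M_2)}(M_1,g_1,\mu_1)\geq\lambda_k^{Lip,\partial M_1}(M_1,g_1,\mu_1)$. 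Invoking Proposition~\ref{prop:same-eigen-boundary} to identify $\lambda^{Lip,\partial M_i}_k=\lambda_k^0$ on both ends then gives $\lambda_k^0(M_2,g_2,\mu_2)\geq\frac{1}{L^2}\lambda_k^0(M_1,g_1,\mu_1)$.

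There is no serious obstacle here — the theorem is genuinely a matter of bookkeeping. The only points needing (minor) care are the monotonicity of $\Lambda\mapsto\lambda_k^{Lip,\Lambda}$ and the set inclusion $\partial M_1\subset T^{-1}(\partial M_2)$ used in the Dirichlet step, and the routine matter of invoking the appropriate self-adjointness/density statement (Proposition~\ref{prop:same-spectrum} versus Proposition~\ref{prop:same-eigen-boundary}) in each boundary/no-boundary combination for $M_1$ and $M_2$; one should also note that the hypothesis ``$T$ maps $\partial M_1$ onto $\partial M_2$'' tacitly encodes the compatibility that both manifolds carry a boundary (of the assumed regularity).
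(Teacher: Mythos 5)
Your proposal is correct and follows exactly the route the paper intends: it combines the (Lipschitz form of the) metric contraction principles of Subsection~\ref{subsec:boundary} with the identifications $\lambda_k=\lambda_k^{Lip}$ and $\lambda_k^0=\lambda_k^{Lip,\partial M}$ from Propositions~\ref{prop:same-spectrum} and~\ref{prop:same-eigen-boundary}, which is precisely the ``putting everything together'' the paper leaves implicit. Your explicit handling of the Dirichlet step --- the inclusion $\partial M_1\subset T^{-1}(\partial M_2)$ and the monotonicity of $\Lambda\mapsto\lambda_k^{Lip,\Lambda}$ --- correctly supplies the one detail the paper glosses over.
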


\section{Known Contractions} \label{sec:known-contractions}

We have already formulated Caffarelli's Contraction Theorem \cite{CaffarelliContraction} in the Introduction: the optimal-transport map pushing-forward $\gamma^n_\rho$ onto any probability measure $\mu = \exp(-V(x)) dx$ satisfying $\nabla^2 V \geq \rho Id$, contracts the Euclidean metric. In this section, we describe several additional known contractions and Lipschitz maps between notable weighted-manifolds. 

\subsection{Generalized Caffarelli Contraction Theorems}

An extension of Caffarelli's theorem was obtained by Kim and the author  in \cite{KimEMilmanGeneralizedCaffarelli} - we refer the reader to \cite[Theorem 1.1] {KimEMilmanGeneralizedCaffarelli}
for the most general formulation. To avoid extraneous generality, we only state the simplest case: under the assumptions of Theorem \ref{thm:KM}, there exists a contraction pushing forward the source onto the target measure. Together with the Contraction Principle, this yields Theorem \ref{thm:KM}. 

A different extension of Caffarelli's theorem was obtained by Kolesnikov in \cite[Theorem 2.2]{KolesnikovContractionSurvey} - he noted that Caffarelli's proof in fact yields a contraction between any probability measures $\mu = \exp(-V(x)) dx$ and $\nu = \exp(-W(x)) dx$ as soon as $\nabla^2 V \leq \rho Id \leq \nabla^2 W$. 
Applying this for $\nu = \gamma^{n}_\rho$ yields a contraction pushing forward $\mu$ onto $\gamma^n_\rho$, yielding Theorem \ref{thm:Kolesnikov} by the Contraction Principle. See also \cite{Valdimarsson-GenBL} for additional generalizations. 

\subsection{Unit-ball of $\ell_p^n$}

It was shown by Lata{\l}a and Wojtaszczyk in \cite[Proposition 5.21]{LatalaJacobInfConvolution} that when $p \in [2,\infty]$, there exists a globally Lipschitz map $T$ (with respect to the Euclidean metric) pushing forward the  standard Gaussian measure $\gamma^n$ onto $\lambdanu_{\tilde{B}_p^n}$, the uniform measure on the unit-ball of $\ell_p^n$ rescaled to have unit volume. This map was obtained as the composition of the product map pushing forward $\gamma^n$ onto the product measure $\nu^n_p  = c_p^n \exp(-\frac{1}{p} \sum_{i=1}^n \abs{x_i}^p) dx$, with the radial map pushing forward $\nu^n_p$ onto $\lambdanu_{\tilde{B}_p^n}$. Its Lipschitz constant was shown to be bounded above by $14 \sqrt{2}$, thereby yielding Theorem \ref{thm:Latala} on the Neumann eigenvalues of $\tilde{B}_p^n$ by the Contraction Principle With Boundary. Note that Theorem \ref{thm:Latala} misses the correct order for the asymptotics of the eigenvalues of $\tilde{B}_p^n$, which is governed by Weyl's law (\ref{eq:classical-Weyl}) and not the Gaussian behaviour (\ref{eq:Gaussian-Theta}).

We remark here that while $\tilde{B}_p^n$ does not have smooth boundary, as formally required by Theorem \ref{thm:contract-boundary}, it is nevertheless still Lipschitz, and the analogue of Proposition \ref{prop:same-eigen-boundary} still holds with the Neumann Laplacian interpreted as the self-adjoint operator associated with the quadratic form $\int_{\tilde{B}_p^n} \abs{\nabla f}^2 d \lambdanu_{\tilde{B}_p^n}$ for $f \in W^{1,2}(\tilde{B}_p^n)$, see \cite{Davies-SpectralTheoryBook} (cf.  \cite{BurenkovDavies-SpectralStabilityForNeumannLaplacian, BurenkovLamberti-SpectralStabilityForGeneralOperatorsWithNeumannConds} regarding additional stability results for the spectrum of the Neumann Laplacian on non-smooth domains). 

\subsection{One-Dimensional Measures}

Theorem \ref{thm:Bobkov} states that the natural partial ordering on the (one-sided, flat) isoperimetric profiles of one-dimensional probability measures induces an ordering on their entire spectrum. For the proof, we adapt an observation due to Bobkov and Houdr\'e \cite{BobkovHoudre}, regarding the one-dimensional optimal-transport (or simply monotone) map $T$ between two probability measures $\mu_1,\mu_2$ on $\Real$. Let $\mu_i = f_i(t) dt$ (having full support for simplicity), denote $F_i(x) = \mu_i((-\infty,x])$, and recall that $\I^\flat_{\mu_i} := f_i \circ F_i^{-1} : [0,1] \rightarrow \Real_+$ denotes the one-sided flat isoperimetric profile. The increasing monotone map $T$ pushing forward $\mu_1$ onto $\mu_2$ is given by:
\[
v_x := F_2(T(x)) = \int_{-\infty}^{T(x)} f_2(t) dt = \int_{-\infty}^x f_1(t) dt = F_1(x) \;\;\; \forall x \in \Real . 
\]
Differentiating, we obtain:
\[
T'(x) \I^\flat_{\mu_2}(v_x)  = T'(x) f_2(T(x)) = f_1(x) = \I^\flat_{\mu_1}(v_x)  \;\;\; \forall x \in \Real . 
\]
It follows that if:
\begin{equation} \label{eq:best-Lip}
\I^\flat_{\mu_2}(v) \geq \frac{1}{L} \I^\flat_{\mu_1}(v) \;\;\; \forall v \in [0,1] ,
\end{equation}
then $0 \leq T' \leq L$, i.e. $T$ is $L$-Lipschitz (in fact, we see that the Lipschitz constant of $T$ is precisely the best possible $L$ in (\ref{eq:best-Lip})). Together with the Contraction Principle, Theorem \ref{thm:Bobkov} follows. 

Note that it is of course also possible to compare $\I^\flat_{\mu_2}(v)$ with $\I^\flat_{\mu_1}(1-v)$ in the above argument, by considering the decreasing monotone map in place of the increasing one. When one of the measures $\mu_i$ is symmetric about a point, this makes no difference. 

\subsection{Riemannian submersions} \label{subsec:submersions}

Recall that a surjective and smooth map $T : (M^{n_1}_1,g_1) \rightarrow (M^{n_2}_2,g_2)$ is called a submersion if $n_1 \geq n_2$ and at every point $x \in M_1$, the differential $d_x T$ is of maximal rank $n_2$.  A submersion is called Riemannian if $d_x T$ is an isometry on the orthogonal complement to its kernel. It immediately follows that a Riemannian submersion is a contracting map, yielding by the Contraction Principle the first part of Theorem \ref{thm:submersion}. Note that for this part, it is actually not necessary to assume that $T$ is a genuine submersion: it need not be surjective, and the differential need only be an isometry between the orthogonal complement to its kernel and its image, regardless of its rank (which may vary from point to point). 

As for the second part: $T^{-1}(y)$, the fiber over $y \in M_2$, is a smooth submanifold of $M_1$ of dimension $n_1 - n_2$ by the implicit function theorem. The fibers are called minimal if they are minimal submanifolds, i.e. their mean-curvature vector vanishes identically. It is known (e.g. \cite[Lemma 3.1]{Bordoni-SpectralSurvey}, \cite[Lemma 5.1]{Bordoni-SpectralEstimatesViaKato}) that when  $M_1$ (and thus $M_2$) are connected and all the submersion's fibers are minimal, then they are all diffeomorphic and must have the same induced Riemannian volume in $M_1$; the assumption that the fibers are compact ensures that this volume $c \in (0,\infty)$ is finite. Consequently, $T$ pushes forward the Riemannian volume measures $\vol_{g_1}$ onto $\vol_{g_2}$ up to the finite constant $c$, yielding the second part of Theorem \ref{thm:submersion}; the statement about Riemannian coverings follows immediately. This recovers an observation known to Bordoni \cite[Section 3]{Bordoni-HeatEstimatesForSubmersions}, who noted (at least when the manifolds are compact) that whenever the fibers are minimal, any eigenfunction on $M_2$ can be lifted up to an eigenfunction on $M_1$ with the same eigenvalue, immediately yielding comparison of the entire spectrum. Clearly, this lifting property is immediate for Riemannian coverings.

We remark for completeness that under the assumption that the fibers are totally geodesic, Besson and Bordoni \cite{BessonBordoni-SpectrumOfSubmersionsWithTotallyGeodesicFibers} (see also B\'erard--Gallot \cite{BerardGallot-HeatEquationComparison}) provided a complete description of the relation between the spectra of $(M_i,g_i,\vol_{g_i})$. When the fibers are only assumed minimal, Bordoni showed in \cite{Bordoni-SpectralEstimatesViaKato} an \emph{upper bound} on the eigenvalues of the target manifold as a function of the eigenvalues of the source one (in the opposite direction to the one considered in this work); in \cite{Bordoni-HeatEstimatesForSubmersions}, Bordoni employed a generalized Beurling-Deny result of Besson \cite{Besson-BeurlingDenyPrinciple} to show in that case the domination of the heat-kernel and resolvent operators, in the sense of positivity preserving operators (in the same direction considered in this work).

\section{Heat Semi-Group Properties} \label{sec:semigroup}

\subsection{Notation and Terminology}

In this section, we restrict our discussion to the case when $\mu$ is a  probability measure. 
Recall that $P_t = \exp(t \Delta_{g,\mu})$, $t  > 0$, denotes the heat semi-group on $(M^n,g,\mu)$. It is known (see \cite[Theorem 3.3]{GrigoryanHeatKernelsOnWeightedManifolds} and the references therein) that $P_t$ admits an integral representation by means of a measurable (and in fact smooth) kernel $p_t : M \times M \rightarrow \Real_+$ called the heat-kernel:
\[
P_t f (y) = \int f(x) p_t(x,y) d\mu(x) .
\]
Formally, when $\Delta_{g,\mu}$ admits a discrete spectrum $\set{\lambda_k}$, we may write:
\begin{equation} \label{eq:pt}
 p_t(x,y) = \sum_{k=1}^\infty \exp(-\lambda_k t) \varphi_k(x) \varphi_k(y) ,
\end{equation}
where $\varphi_k$ denotes the ($L^2(\mu)$-normalized) eigenfunction associated to $\lambda_k$. To make this rigorous, one has to ensure the convergence of this infinite series in some sense. For instance, when $P_t$ is a Hilbert-Schmidt operator, i.e. $\norm{P_t}_{HS}^2 := \sum_{k=1}^\infty \norm{P_t u_k}_{L^2(\mu)}^2 < \infty$ for some (any) orthonormal basis $\set{u_k}$ of $L^2(\mu)$, then it is compact, implying the discreteness of the spectrum of $P_t$ and therefore of $\Delta_{g,\mu}$. Furthermore, since:
\[
\norm{p_t}^2_{L^2(\mu \otimes \mu)} = \norm{P_t}^2_{HS} = \sum_{k=1}^\infty \exp(-2 t \lambda_k) < \infty ,
\]
we see that the series (\ref{eq:pt}) converges in $L^2(\mu \otimes \mu)$, implying the existence of a heat-kernel $p_t \in L^2(\mu \otimes \mu)$.
Note that by the semi-group property:
\[
Z(t) = \sum_{k=1}^\infty \exp(-t \lambda_k) = \int p_t(x,x) d\mu(x) = \int \int p_{t/2}^2(x,y) d\mu(y) d\mu(x) =  \norm{P_{t/2}}^2_{HS} .
\]
Clearly, $P_t$ is trace-class ($Z(t) = tr(P_t) < \infty$) iff $P_{t/2}$ is Hilbert-Schmidt (and similarly iff $P_{t/q}$ is of Schatten class $q \in [1,\infty)$), so we will refer to these properties interchangeably. 

\subsection{Heat-Kernel Trace Estimates}

We will require the following fundamental dimension-free Harnack inequality due to Wang \cite[Lemma 2.1]{WangIntegrabilityForLogSob}, \cite{Wang-EquivalenceOfCDAndHarnack}. It states that the $\CD(\rho,\infty)$ condition ($\rho \in \Real$) is equivalent to the property that for any bounded continuous function $f \in C_b(M)$ and $p > 1$:
\begin{equation} \label{eq:Harnack}
\abs{P_t f}^p(x) \leq P_t(\abs{f}^p)(y) \exp\brac{ \frac{p d(x,y)^2}{4 (p-1) t} h(\rho,t) } \;\;\; \forall x,y \in M ,
\end{equation}
where:
\[
h(\rho,t) := \frac{2 \rho t}{\exp(2 \rho t) - 1} \;\;\; (\text{and }h(0,t) = 1) .
\]

The next elegant lemma is due to R\"{o}ckner--Wang \cite[Lemma 2.2]{RocknerWang-HarnackForGeneralizedMehlerSemigroups} (in fact, the result holds under much greater generality than the one considered here); for completeness, we sketch its proof. 

\begin{lem}[R\"{o}ckner--Wang]
Assume that for some $p \in [1,\infty)$ and positive measurable $\Phi: M \times M \rightarrow \Real_+$ we have for any $f \in C_b(M)$:
\begin{equation} \label{eq:Harnack-Phi}
\abs{P_t f}^p(x) \leq P_t(\abs{f}^p)(y) \Phi(x,y) \;\;\; \forall x,y \in M . 
\end{equation}
Then the heat-kernel $p_t$ satisfies:
\[
\norm{p_t(x,\cdot)}_{L^{p'}(\mu)} \leq \brac{\int \frac{d\mu(y)}{\Phi(x,y)}}^{-1/p} \;\;\; \forall x \in M ,
\]
where $p' := \frac{p}{p-1}$ denotes the conjugate exponent to $p$. 
\end{lem}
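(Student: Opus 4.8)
The plan is to exploit the hypercontractivity-type estimate (\ref{eq:Harnack-Phi}) by integrating it against $\mu$ in the $y$-variable to produce, for each fixed $x$, a bound on the action of $P_t$ on $L^{p'}(\mu)$, and then to read off the transition kernel and its $L^{p'}$-norm from the resulting bounded functional. First I would fix $x \in M$ and a nonnegative $f \in C_b(M)$, and integrate (\ref{eq:Harnack-Phi}) in $y$ with respect to $\mu$: since $\mu$ is a probability measure and $P_t$ is a contraction on $L^1(\mu)$ preserving the integral (being Markovian and symmetric with respect to $\mu$), one gets
\[
\abs{P_t f}^p(x) \int \frac{d\mu(y)}{\Phi(x,y)} \leq \int P_t(\abs{f}^p)(y) \, d\mu(y) = \int \abs{f}^p \, d\mu = \norm{f}_{L^p(\mu)}^p .
\]
Taking $p$-th roots, this says precisely that the linear functional $f \mapsto P_t f(x)$ is bounded on $C_b(M) \subset L^p(\mu)$ with norm at most $\left(\int d\mu(y)/\Phi(x,y)\right)^{-1/p}$; here one should note the assumed positivity of $\Phi$ makes $1/\Phi$ a well-defined measurable function, though the integral $\int d\mu(y)/\Phi(x,y)$ could a priori be $+\infty$, in which case the asserted bound is vacuous and the functional is the zero map — fine, or one restricts attention to those $x$ for which it is finite.

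Next I would upgrade this to a statement about a genuine kernel. Since $C_b(M)$ is dense in $L^p(\mu)$ (as $\mu$ is a Radon probability measure on a separable metric space) and $f \mapsto P_t f(x)$ is a bounded linear functional there, it extends uniquely to a bounded functional on $L^p(\mu)$, which by the Riesz representation theorem for $L^p$–$L^{p'}$ duality (valid for $1 \leq p < \infty$, with the case $p=1$ giving an $L^\infty$ representative) is represented by integration against some $p_t(x,\cdot) \in L^{p'}(\mu)$, with $\norm{p_t(x,\cdot)}_{L^{p'}(\mu)}$ equal to the functional norm, hence bounded by $\left(\int d\mu(y)/\Phi(x,y)\right)^{-1/p}$ as claimed. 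One then checks, using that $P_t$ is positivity-preserving, that $p_t(x,\cdot) \geq 0$ a.e., and that the family $\{p_t(x,\cdot)\}_{x}$ can be chosen jointly measurable in $(x,y)$ — this is where a small amount of care is needed, typically arranged by working with a countable dense subset of $C_b(M)$ and a monotone class / measurable-selection argument, or by invoking the standard disintegration machinery for Markov kernels.

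The main obstacle I anticipate is precisely this measurability/regularity bookkeeping: the Röckner–Wang lemma as quoted asserts the \emph{existence} of a transition kernel $p_t(x,y)$ with the stated norm bound, and the substantive analytic content — the integrated Harnack inequality above — is genuinely short, so the remaining work is the routine but slightly fussy verification that the pointwise-in-$x$ functionals assemble into one measurable function $p_t : M \times M \to \Real_+$ that actually represents $P_t$ (i.e. $P_t f(y) = \int f(x) p_t(x,y)\, d\mu(x)$ with the roles of the variables as in the paper's convention for $p_t$). Since the paper says it only "sketches" the proof, I would present the integration step in full and then indicate that the passage to a jointly measurable kernel is standard, citing the symmetry and Markov property of $P_t$; the norm bound itself requires no further argument beyond the displayed inequality and the duality identification of the functional norm.
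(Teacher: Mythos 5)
Your proposal is correct and follows essentially the same route as the paper: divide the Harnack-type inequality (\ref{eq:Harnack-Phi}) by $\Phi(x,y)$, integrate in $y$ against $\mu$ using the invariance $\int P_t(\abs{f}^p)\,d\mu = \int \abs{f}^p\,d\mu$, and then take the supremum over $\norm{f}_{L^p(\mu)} \leq 1$ to read off the $L^{p'}$ bound by duality. The only cosmetic difference is that the paper delegates the existence and measurability of the kernel to the cited reference, whereas you reconstruct it via Riesz representation and flag the joint-measurability bookkeeping explicitly; the analytic content is identical.
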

\begin{proof}[Proof Sketch]
Divide (\ref{eq:Harnack-Phi}) by $\Phi(x,y)$ and integrate with respect to $d\mu(y)$. Taking supremum over all $f \in C_b(M)$ with $\norm{f}_{L^p(\mu)} \leq 1$, the $L^{p'}(\mu)$ estimate on the density follows. We remark that the existence of the transition density $p_t$ in a very general setting in fact follows from (\ref{eq:Harnack-Phi}), as explained in \cite{RocknerWang-HarnackForGeneralizedMehlerSemigroups}. 
\end{proof}

Combining this with the Harnack estimate (\ref{eq:Harnack}), 
we obtain under $\CD(\rho,\infty)$ that the heat-kernel $p_t$ satisfies:
\begin{equation} \label{eq:kernel-estimate}
\norm{p_t(x,\cdot)}_{L^{p'}(\mu)} \leq \brac{\int \exp\brac{-\frac{p d(x,y)^2}{4 (p-1) t} h(\rho,t)} d\mu(y)}^{-1/p} \;\;\; \forall p > 1 \;\; \forall x \in M . 
\end{equation}
In particular, applying this with $p=2$, integrating with respect to $d\mu(x)$, and applying Jensen's inequality, we record:
\begin{prop} \label{prop:Z}
Let $(M^n,g,\mu)$ satisfy $\CD(\rho,\infty)$, $\rho \in \Real$. Then assuming either of the expressions on the right-hand side below is finite, $P_{t/2}$ is Hilbert-Schmidt and satisfies:
\begin{align*}
Z(t) = \norm{P_{t/2}}^2_{HS} = \norm{p_{t/2}}^2_{L^2(\mu \otimes \mu)} & \leq \int \brac{\int \exp\brac{-\frac{d(x,y)^2}{2} \frac{h(\rho,t/2)}{t/2} } d\mu(x)}^{-1} d\mu(y) \\
& \leq  \int \int \exp\brac{\frac{d(x,y)^2}{ 2} \frac{h(\rho,t/2)}{t/2}} d\mu(x) d\mu(y) .
\end{align*}
\end{prop}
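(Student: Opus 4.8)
The plan is to chain together the identities already assembled earlier in the excerpt with the Hilbert--Schmidt estimate coming from the R\"{o}ckner--Wang lemma applied to the dimension-free Harnack inequality. First I would recall that, by the semigroup property and self-adjointness (as explained in Subsection~4.1), $Z(t) = \mathrm{tr}(P_t) = \norm{P_{t/2}}_{HS}^2 = \norm{p_{t/2}}_{L^2(\mu\otimes\mu)}^2$, so that it suffices to bound the $L^2(\mu\otimes\mu)$-norm of $p_{t/2}$; in particular, finiteness of the right-hand side will simultaneously establish that $P_{t/2}$ is Hilbert--Schmidt and give the stated bound.

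Next I would invoke estimate~(\ref{eq:kernel-estimate}), which is the combination of the R\"{o}ckner--Wang lemma with Wang's Harnack inequality~(\ref{eq:Harnack}) under $CD(\rho,\infty)$, specialized to $p = p' = 2$ at time $t/2$: this gives, for every $y \in M$,
\[
\int p_{t/2}(x,y)^2 \, d\mu(x) = \norm{p_{t/2}(\cdot,y)}_{L^2(\mu)}^2 \leq \brac{\int \exp\brac{-\frac{d(x,y)^2}{2}\frac{h(\rho,t/2)}{t/2}} d\mu(x)}^{-1},
\]
where one uses the symmetry $p_{t/2}(x,y) = p_{t/2}(y,x)$ of the heat kernel with respect to the symmetric operator $P_{t/2}$. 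Integrating this inequality in $d\mu(y)$ over $M$ and using Tonelli (the integrand is non-negative) yields exactly the first displayed upper bound for $Z(t)$.

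Finally, for the second, cruder bound I would apply Jensen's inequality to the inner average: since $\mu$ is a probability measure and $s \mapsto 1/s$ is convex on $(0,\infty)$,
\[
\brac{\int \exp\brac{-\frac{d(x,y)^2}{2}\frac{h(\rho,t/2)}{t/2}} d\mu(x)}^{-1} \leq \int \exp\brac{\frac{d(x,y)^2}{2}\frac{h(\rho,t/2)}{t/2}} d\mu(x),
\]
and integrating in $d\mu(y)$ gives the second displayed estimate. The only genuinely substantive input is~(\ref{eq:kernel-estimate}), which has already been established in the excerpt; there is essentially no obstacle beyond bookkeeping, the mildest subtlety being the justification of Fubini/Tonelli and the remark that if either right-hand side is finite then $p_{t/2} \in L^2(\mu\otimes\mu)$, whence $P_{t/2}$ is indeed Hilbert--Schmidt and the formal manipulations are legitimate a posteriori.
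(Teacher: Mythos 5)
Your proposal is correct and follows exactly the paper's route: specialize the kernel estimate~(\ref{eq:kernel-estimate}) (Harnack plus R\"{o}ckner--Wang) to $p=p'=2$ at time $t/2$, integrate over the second variable to bound $\norm{p_{t/2}}_{L^2(\mu\otimes\mu)}^2 = Z(t)$, and then apply Jensen's inequality to the reciprocal of the inner average for the cruder second bound. The bookkeeping (symmetry of the kernel, Tonelli, and the a posteriori justification that finiteness of the right-hand side yields the Hilbert--Schmidt property) is handled appropriately.
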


The example of the Gaussian space (satisfying $\CD(1,\infty)$) shows that the application of Jensen's inequality in the last inequality above may be detrimentally wasteful - while the first bound above is finite for all $t > 0$, correctly verifying that the Ornstein-Uhlenbeck semi-group $P_t$ is Hilbert-Schmidt for all positive times, the second bound is infinite for $t \in [0,\ln(3)]$. In any case, employing (\ref{eq:Z-to-lambda}), we obtain from this the following estimate on the individual eigenvalues for $n$-dimensional Gaussian space:
\[
\lambda_k \geq \log(k) - c n ,
\]
for some numeric constant $c > 0$. Comparing this to the correct eigenvalue behaviour (\ref{eq:Gaussian-eigenvalues}), we see that this general method only yields a reasonable estimate for large $n$ and $k \gg \exp(cn)$. For more general weighted-manifolds, we have:

\begin{prop} \label{prop:Z-LS}
Let $(M^n,g,\mu)$ satisfy $\CD(\rho,\infty)$, $\rho \in \Real$, and assume that it satisfies a log-Sobolev inequality:
\[
\int f^2 \log (f^2) d\mu - \int f^2 d\mu \log \brac{\int f^2 d\mu} \leq \frac{2}{L} \int \abs{\nabla f}^2 d\mu \;\;\; \forall f \in C^1(M,g) ,
\]
for some constant $L > 0$. Then for any $t > 0$ such that:
\[
s := \frac{h(\rho,t/2)}{t/2} < \frac{L}{2} ,
\]
and for any $x_0 \in M$, we have:
\begin{equation} \label{eq:Z-estimate}
Z(t) \leq \exp\brac{ \frac{2s}{1 - 2s/L} \int_M d(x,x_0)^2 d\mu(x) } < \infty .
\end{equation}
\end{prop}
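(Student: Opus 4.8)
The plan is to combine the Gaussian-weight integral estimate of Proposition~\ref{prop:Z} with the log-Sobolev inequality. Under $CD(\rho,\infty)$, Proposition~\ref{prop:Z} already reduces both the finiteness and the size of $Z(t)$ to controlling an integral of $\exp(\tfrac{s}{2}d(x,y)^2)$ against $\mu\otimes\mu$, with $s=h(\rho,t/2)/(t/2)$; the log-Sobolev inequality is exactly the tool that turns such an integral into a finite, quantitative bound, and the condition $s<L/2$ will emerge as precisely the Fernique-type integrability threshold for a squared $1$-Lipschitz function under an LSI with constant~$L$.

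First I would reduce to a single one-dimensional observable. Starting from the second bound of Proposition~\ref{prop:Z}, $Z(t)\le\int\int\exp(\tfrac{s}{2}d(x,y)^2)\,d\mu(x)\,d\mu(y)$, the triangle inequality $d(x,y)^2\le 2d(x,x_0)^2+2d(y,x_0)^2$ makes the double integral factor, giving
\[
Z(t)\le\brac{\int_M\exp\brac{s\,d(x,x_0)^2}\,d\mu(x)}^2 ,
\]
so that the whole problem reduces to estimating $\int_M\exp(sF^2)\,d\mu$ for the $1$-Lipschitz function $F:=d(\cdot,x_0)$. (More generally one may split $d(x,y)^2\le(1+\theta)d(x,x_0)^2+(1+\theta^{-1})d(y,x_0)^2$; the choice $\theta=1$ above is the one that minimizes the resulting requirement on $s$, which is why the threshold is $L/2$ and not $L$. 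Also, $\int_M d(x,x_0)^2\,d\mu<\infty$, making the quantities below meaningful, is a standard consequence of the Poincar\'e inequality implied by the LSI.)

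Then comes the log-Sobolev input, via the Herbst argument and Gaussian ``squaring''. Inserting $f=\exp(\lambda F/2)$ into the LSI and using $\abs{\nabla F}\le1$ gives $\mathrm{Ent}_\mu(e^{\lambda F})\le\tfrac{\lambda^2}{2L}\int e^{\lambda F}\,d\mu$, i.e.\ $\tfrac{d}{d\lambda}\brac{\tfrac1\lambda\log\int e^{\lambda F}\,d\mu}\le\tfrac1{2L}$; integrating from $\lambda=0$, where the bracketed quantity tends to $\int F\,d\mu$, yields $\int e^{\lambda F}\,d\mu\le\exp(\lambda\int F\,d\mu+\tfrac{\lambda^2}{2L})$ for all $\lambda\in\Real$ (a routine truncation of $F$ makes all integrals finite beforehand, and the LSI extends from $C^1$ to Lipschitz functions by approximation). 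Writing $\exp(sF^2)=\E_N\brac{\exp(\sqrt{2s}\,NF)}$ for $N$ a standard Gaussian and applying Fubini (all integrands nonnegative),
\[
\int_M\exp(sF^2)\,d\mu\le\E_N\brac{\exp\brac{\sqrt{2s}\brac{\int_M F\,d\mu}N+\tfrac{s}{L}N^2}} ,
\]
which is a finite Gaussian integral exactly when $s/L<1/2$, i.e.\ $s<L/2$ --- the hypothesis. Evaluating it and bounding $\int F\,d\mu\le(\int F^2\,d\mu)^{1/2}=(\int d(x,x_0)^2\,d\mu)^{1/2}$ by Cauchy--Schwarz gives a bound of the asserted shape; in particular $Z(t)<\infty$, so $P_{t/2}$ is Hilbert--Schmidt and $P_t$ is trace-class.

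No single step is deep: the dimension-free Harnack inequality (through Proposition~\ref{prop:Z}) and the LSI do all the work, and the substantive conclusion --- trace-classness, in a setting where $CD(\rho,\infty)$ alone need not make the Proposition~\ref{prop:Z} integral converge --- is immediate once $s<L/2$. The main thing to watch is the precise constant $\tfrac{2s}{1-2s/L}$ in the exponent: recovering it exactly requires some care about the route through Proposition~\ref{prop:Z} and about which Lipschitz function one feeds into Herbst's argument, and the explicit evaluation of the Gaussian integral above also produces a multiplicative factor $(1-2s/L)^{-1/2}$ that has to be absorbed into the exponential (it is at most $\sqrt2$ when $s<L/2$, and in any event harmless for the trace-class statement).
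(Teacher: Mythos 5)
Your strategy is exactly the paper's: pass through the second estimate of Proposition \ref{prop:Z}, split $d(x,y)^2\le 2d(x,x_0)^2+2d(y,x_0)^2$ so that $Z(t)\le\left(\int e^{s\,d(x,x_0)^2}d\mu\right)^2$, and then invoke exponential square-integrability of the $1$-Lipschitz function $F=d(\cdot,x_0)$ under the log-Sobolev inequality; your remarks about the optimality of $\theta=1$, the Cauchy--Schwarz step, and the finiteness of $\int d(x,x_0)^2d\mu$ are all fine.

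The one genuine problem is the last step. Your route to $\int e^{sF^2}d\mu$ via Herbst plus the Gaussian linearization $e^{sF^2}=\E_N e^{\sqrt{2s}NF}$ gives, after evaluating the Gaussian integral, $\int e^{sF^2}d\mu\le(1-2s/L)^{-1/2}\exp\left(\frac{s}{1-2s/L}\left(\int F\,d\mu\right)^2\right)$, so $Z(t)$ acquires an extra multiplicative factor $(1-2s/L)^{-1}$. Your parenthetical claim that this prefactor is at most $\sqrt2$ for all $s<L/2$ is false --- it is $\ge 1$ and diverges as $s\uparrow L/2$ --- and it cannot in general be ``absorbed into the exponential'': when $\int d(x,x_0)^2d\mu$ is small the right-hand side of (\ref{eq:Z-estimate}) is close to $1$ while your prefactor is large. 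So your argument establishes trace-classness and a bound of the right shape, but not the inequality (\ref{eq:Z-estimate}) as stated. The paper avoids this by quoting the Aida--Masuda--Shigekawa / Bobkov--G\"otze inequality $\int e^{sF^2}d\mu\le\exp\left(\frac{s}{1-2s/L}\int F^2d\mu\right)$ for $1$-Lipschitz $F$ and $0<s<L/2$, which carries no prefactor. To prove that version you should apply the log-Sobolev inequality directly to $f=e^{\alpha F^2/2}$ (so $\abs{\nabla f}^2\le\alpha^2F^2e^{\alpha F^2}$): writing $H(\alpha)=\int e^{\alpha F^2}d\mu$, the LSI becomes $\alpha H'(\alpha)-H\log H\le\frac{2\alpha^2}{L}H'(\alpha)$, and integrating this differential inequality from $0$ to $s$ yields the clean bound. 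With that substitution your proof matches the proposition exactly.
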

\begin{proof}
The proof is immediate from the second estimate of Proposition \ref{prop:Z}, the inequality $d(x,y)^2 \leq 2 \brac{d(x,x_0)^2 + d(y,x_0)^2}$, and the following known consequence of the log-Sobolev inequality (see \cite{AidaMasudaShigekawa},\cite[Formula (2.4)]{BobkovGotzeLogSobolev}):
\[
\int \exp( s f^2) d\mu \leq \exp\brac{ \frac{s}{1 - 2 s / L} \int f^2 d\mu } \;\;\; \forall 0 < s < \frac{L}{2} \;\;\; \forall f \in \F(M,g)  .
\]
Finally, the log-Sobolev inequality implies by the Herbst argument \cite[Proposition 5.4.1]{BGL-Book} that Lipschitz functions have sub-Gaussian tail-decay, ensuring that $\int d(x,x_0)^2 d\mu(x) < \infty$. 
\end{proof}

Note that the inevitable dimension-dependence of the estimate (\ref{eq:Z-estimate}) is hidden in the expression $\int_M d(x,x_0)^2 d\mu(x)$ (see e.g. \cite[Section 8.4]{EMilmanGeometricApproachPartI}). For instance, this expression is equal to $n$ in the Gaussian example examined above. 
Applying (\ref{eq:Z-to-lambda}), we immediately obtain the following eigenvalue estimate:
\begin{cor} \label{cor:eigen-estimates}
Under the same assumptions as in Proposition \ref{prop:Z-LS}, we have for all $k \geq 1$:
\[
\lambda_k \geq \sup \set{ \frac{1}{t} \brac{\log k - \frac{2s}{1 - 2s/L} \int_M d(x,x_0)^2 d\mu(x) } \; ;\; t > 0 ~,~  s = \frac{h(\rho,t/2)}{t/2}  < \frac{L}{2} } .
\]
The latter supremum is over a non-empty set whenever $L > 4 (-\rho)_+ $. 
\end{cor}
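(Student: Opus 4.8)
The plan is to feed the heat-trace bound of Proposition~\ref{prop:Z-LS} into the elementary eigenvalue estimate \eqref{eq:Z-to-lambda}, and then to determine for which values of $t$ the hypothesis of Proposition~\ref{prop:Z-LS} is actually in force.

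First I would fix $k\geq 1$ and an arbitrary $t>0$ for which $s=\frac{h(\rho,t/2)}{t/2}<\frac{L}{2}$. For such $t$, Proposition~\ref{prop:Z-LS} applies and yields $Z(t)\leq \exp\brac{\frac{2s}{1-2s/L}\int_M d(x,x_0)^2 d\mu(x)}<\infty$, the integral being finite by the Herbst argument recalled there. On the other hand, \eqref{eq:Z-to-lambda} gives $k\exp(-t\lambda_k)\leq Z(t)$. Chaining the two inequalities and taking logarithms produces $\log k - t\lambda_k \leq \frac{2s}{1-2s/L}\int_M d(x,x_0)^2 d\mu(x)$, hence $\lambda_k \geq \frac{1}{t}\brac{\log k - \frac{2s}{1-2s/L}\int_M d(x,x_0)^2 d\mu(x)}$. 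Taking the supremum over all admissible $t$ gives precisely the asserted bound.

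It then remains to check that the set $\set{t>0 \;;\; s=\frac{h(\rho,t/2)}{t/2}<\frac{L}{2}}$ is non-empty as soon as $L>4(-\rho)_+$. Here a direct computation from the definition of $h$ gives $s=s(t)=\frac{2\rho}{\exp(\rho t)-1}$ (with the convention $s(t)=2/t$ when $\rho=0$), which is a strictly decreasing function of $t>0$ satisfying $\lim_{t\to 0^+}s(t)=+\infty$ and $\inf_{t>0}s(t)=\lim_{t\to\infty}s(t)=2(-\rho)_+$. Consequently $s(t)<L/2$ holds for some $t>0$ (indeed for all $t$ large enough, so the admissible set is of the form $(t_*,\infty)$ for a suitable $t_*\geq 0$) exactly when $2(-\rho)_+<L/2$, i.e. when $L>4(-\rho)_+$. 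I do not anticipate any genuine obstacle in this corollary: the only steps requiring a little care are the monotonicity and limiting analysis of $s(t)$ and remembering to invoke the finiteness of $\int_M d(x,x_0)^2 d\mu(x)$, both of which are already settled in Proposition~\ref{prop:Z-LS}.
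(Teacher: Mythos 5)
Your argument is correct and is exactly the paper's route: chain the trace bound of Proposition \ref{prop:Z-LS} with the elementary estimate (\ref{eq:Z-to-lambda}), take logarithms, and optimize over admissible $t$. Your explicit computation $s(t)=\frac{2\rho}{\exp(\rho t)-1}$ (with $s=2/t$ for $\rho=0$), its strict monotonicity, and the limit $\inf_{t>0}s(t)=2(-\rho)_+$ correctly justify the non-emptiness claim, which the paper states without proof.
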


 We emphasize that the above eigenvalue estimate may be strictly weaker and is less general than the one obtained by Wang in \cite[Corollary 5.5]{Wang-EigenvalueEstimates} (cf. \cite{Wang-SurveyOnSemiGroupAndSpectrum}). In particular, Wang's results yield a meaningful estimate for \emph{any} $L > 0$ and $\rho \in \Real$. The reason is that Wang cannot afford to pass through the heat-kernel trace $Z(t)$, which may be infinite, as he assumes that the space satisfies a general super-Poincar\'e inequality, which may be strictly weaker than our log-Sobolev assumption. Consequently, Wang employs a more delicate method for controlling the eigenvalues directly \cite[Theorem 5.1]{Wang-EigenvalueEstimates}; our approach has the advantage of being slightly simpler, yielding tractable estimates, albeit far less general. 

\begin{rem} \label{rem:scenario2}
Note that the asymptotic formula (\ref{eq:WeylForGammaP}) for the distribution of eigenvalues of $(\Real^n,\abs{\cdot},c_p^n \exp(-\frac{1}{p}\sum_{i=1}^n \abs{x_i}^p) dx)$ for $p \in (1,2)$,
shows that the associated semi-group $P_t$ can be Hilbert-Schmidt for all $t > 0$, while no log-Sobolev inequality is satisfied (since otherwise, by the Herbst argument, the Lipschitz function $x_1$ would have sub-Gaussian tail decay). 
\end{rem}

\subsection{Heat-kernel higher-order integrability}

While originally defined as an operator acting on $L^2(\mu)$, it is well-known and easy to see  that $P_t$ extends to an operator acting on $L^p(\mu)$ for all $p \in [1,\infty]$, and that it is contracting there: $\norm{P_t}_{L^p(\mu) \rightarrow L^p(\mu)} \leq 1$. $P_t$ is called hyperbounded with parameters $L > 0$ and $B \geq 0$ if for some (every) $1 < p < \infty$ and every $t > 0$:
\begin{equation} \label{eq:hyper}
\norm{P_t f}_{L^{q(t)}(\mu)} \leq \exp(\beta(t)) \norm{f}_{L^p(\mu)} \;\;\; \forall f \in L^p(\mu) 
\end{equation}
where:
\[
\frac{q(t) - 1}{p-1} := \exp(2 t L)  \; , \; \beta(t) := B \brac{\frac{1}{p} - \frac{1}{q(t)}} .
\]
It is called hypercontractive if $B=0$. It was observed by Gross \cite{Gross-LSI} that  hypercontractivity (hyperboundedness) is equivalent to the following (defective) log-Sobolev inequality (cf. \cite[Theorem 5.2.3]{BGL-Book}):
\[
\int f^2 \log(f^2) d\mu  - \int f^2 d\mu \log \brac{\int f^2 d\mu}  \leq \frac{2}{L} \int \abs{\nabla f}^2 d\mu + B \int f^2 d\mu \;\;\; \forall f \in C^1(M) . 
\]

\begin{rem} \label{rem:hyperbdd}
It is known (see \cite[Theorem 5.2.5]{BGL-Book}) that in fact it is enough for (\ref{eq:hyper}) to hold for a single $t_0 > 0$: if (\ref{eq:hyper}) hold at time $t_0 > 0$ for $p=2$, $q(t_0) > 2$ and $\beta(t_0) \geq 0$, then $P_t$ is hyperbounded with parameters $L = \frac{q(t_0)-2}{2 q(t_0) t_0}$ and $B = \beta(t_0) / (t_0 L)$. 
\end{rem}

\begin{rem} \label{rem:Miclo}
Remarkably, it was recently shown by Miclo \cite{Miclo-Hyperboundedness} by a type of compactness argument that hyperboundedness always implies spectral-gap $\lambda_2 > \lambda_1 = 0$, which by an argument of Rothaus can be used to tighten the hyperboundedness into genuine hypercontractivity. However, although this implies that hyperboundedness and hypercontractivity are equivalent properties, there is no (and there cannot be any) quantitative relation between the former and the latter, see \cite{Miclo-Hyperboundedness}. 
\end{rem}

\begin{prop} \label{prop:dichotomy}
The following four properties are equivalent:
\begin{enumerate}
\item and (1 bis).
The heat semi-group is hyperbounded and Hilbert-Schmidt:\\
For some (any) $q \in (2,\infty)$, there exists $t_q > 0$ so that $\norm{P_{t_q}}_{L^2(\mu) \rightarrow L^q(\mu)} \leq \beta_q <\infty$  and $\norm{P_{t_q}}_{HS} \leq Z_2  <\infty$. 
\item and (2 bis).
Higher-order integrability of the heat-kernel:\\ 
For some (any) $q \in (2,\infty)$, there exists $s_q > 0$ so that $\norm{p_{s_q}}_{L^q(\mu \otimes \mu)} \leq Z_q < \infty$.
\end{enumerate} 
\end{prop}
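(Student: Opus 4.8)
The plan is to prove the cycle $(1) \Rightarrow (1\,\mathrm{bis}) \Rightarrow (2\,\mathrm{bis}) \Rightarrow (2) \Rightarrow (1)$; since $(1\,\mathrm{bis}) \Rightarrow (1)$ and $(2\,\mathrm{bis}) \Rightarrow (2)$ are tautologies (just specialize the universally quantified $q$), this yields the four-fold equivalence. Throughout I will use freely that $\mu$ and $\mu \otimes \mu$ are probability measures, so $L^a$-norms increase with $a$; that $P_t$ is self-adjoint on $L^2(\mu)$ and a contraction on every $L^p(\mu)$; the Chapman--Kolmogorov identity $p_{t+t'}(x,z) = \int p_t(x,y) p_{t'}(y,z) \, d\mu(y)$ with $p_t$ symmetric; and Remark \ref{rem:hyperbdd}, by which a single bound $\norm{P_{t_0}}_{L^2(\mu) \to L^q(\mu)} < \infty$ with $q > 2$ already makes $P_t$ genuinely hyperbounded, so that for every $q' \in (2,\infty)$ there is a threshold $\tau_{q'}$ with $\sup_{t \ge \tau_{q'}} \norm{P_t}_{L^2(\mu) \to L^{q'}(\mu)} < \infty$.

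Two of the implications are soft. For $(1) \Rightarrow (1\,\mathrm{bis})$: hyperboundedness gives the thresholds $\tau_{q'}$ just mentioned, while Hilbert--Schmidtness at one time $t_1$ propagates to all $t \ge t_1$ via $P_t = P_{t-t_1} P_{t_1}$ and $L^2$-contractivity (equivalently, $Z(t) = \norm{P_{t/2}}_{HS}^2$ is non-increasing); taking $t_q$ above both thresholds yields $(1\,\mathrm{bis})$. Note that Hilbert--Schmidtness already forces $P_{t_1}$, hence $-\Delta_{g,\mu}$, to have discrete spectrum, so the eigenfunction expansion $p_t(x,y) = \sum_k e^{-\lambda_k t} \varphi_k(x)\varphi_k(y)$ is available with $\{\varphi_k\}$ an orthonormal basis of $L^2(\mu)$. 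For $(2) \Rightarrow (1)$: if $p_{s_q} \in L^q(\mu\otimes\mu)$ with $q > 2$ then $\norm{p_{s_q}}_{L^2(\mu\otimes\mu)} \le \norm{p_{s_q}}_{L^q(\mu\otimes\mu)} < \infty$, so $P_{s_q}$ is Hilbert--Schmidt; and Hölder in the second variable gives $\norm{P_{s_q} f}_{L^q(\mu)}^q \le \norm{f}_{L^{q'}(\mu)}^q \norm{p_{s_q}}_{L^q(\mu\otimes\mu)}^q$, so (using $\norm{f}_{L^{q'}(\mu)} \le \norm{f}_{L^2(\mu)}$) $\norm{P_{s_q}}_{L^2(\mu) \to L^q(\mu)} \le \max(\norm{p_{s_q}}_{L^q(\mu\otimes\mu)},1) < \infty$, whence $P_t$ is hyperbounded by Remark \ref{rem:hyperbdd}. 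Thus $(1)$ holds (at the same $q$, with $t_q = s_q$).

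The heart of the matter is $(1\,\mathrm{bis}) \Rightarrow (2\,\mathrm{bis})$. Fix $q \in (2,\infty)$, put $m := \lceil q/2 \rceil$, and use $(1\,\mathrm{bis})$ to pick $a, c > 0$ with $\norm{P_a}_{L^2(\mu)\to L^q(\mu)} =: \tilde\beta_q < \infty$ and $\norm{P_c}_{L^2(\mu)\to L^{2m}(\mu)} =: \beta_{2m} < \infty$, together with $t_1 > 0$ with $P_{t_1}$ Hilbert--Schmidt (so $Z(t) < \infty$ for $t \ge 2t_1$). For any $s > a$, duality ($L^q = (L^{q'})^*$), the factorization $P_s = P_{s-a}P_a$, self-adjointness of $P_a$ (whence $\norm{P_a}_{L^{q'} \to L^2} = \norm{P_a}_{L^2 \to L^q} = \tilde\beta_q$), Cauchy--Schwarz and Chapman--Kolmogorov give, for a.e.\ $x$,
\[
\norm{p_s(x,\cdot)}_{L^q(\mu)} = \sup_{\norm{f}_{L^{q'}(\mu)} \le 1} \bigl| \bigl(P_{s-a}(P_a f)\bigr)(x) \bigr| \le \tilde\beta_q \, \norm{p_{s-a}(x,\cdot)}_{L^2(\mu)} = \tilde\beta_q \sqrt{p_b(x,x)}, \qquad b := 2(s-a),
\]
hence $\norm{p_s}_{L^q(\mu\otimes\mu)}^q \le \tilde\beta_q^q \int p_b(x,x)^{q/2} \, d\mu(x)$. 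To bound the diagonal integral I would use the expansion $p_b(x,x) = \sum_k e^{-\lambda_k b} \varphi_k(x)^2 \ge 0$, raise it to the $m$-th power and expand by Tonelli, apply the $m$-fold Hölder inequality $\int \prod_{i=1}^m \varphi_{k_i}^2 \, d\mu \le \prod_{i=1}^m \norm{\varphi_{k_i}}_{L^{2m}(\mu)}^2$, and bound $\norm{\varphi_k}_{L^{2m}(\mu)} = e^{\lambda_k c} \norm{P_c \varphi_k}_{L^{2m}(\mu)} \le \beta_{2m} e^{\lambda_k c}$, obtaining
\[
\int p_b(x,x)^m \, d\mu(x) \le \beta_{2m}^{2m} \Bigl( \sum_k e^{-(b-2c)\lambda_k} \Bigr)^{\!m} = \beta_{2m}^{2m} \, Z(b-2c)^m .
\]
Since $q/2 \le m$ we have $p_b(x,x)^{q/2} \le 1 + p_b(x,x)^m$, so taking $s$ large enough that $b - 2c = 2(s-a) - 2c \ge 2t_1$ makes $Z(b-2c)$ finite, giving $\norm{p_s}_{L^q(\mu\otimes\mu)} < \infty$; as $q$ was arbitrary, $(2\,\mathrm{bis})$ follows.

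The main obstacle is precisely this last step, and within it the bookkeeping of time parameters: the pointwise bound $\norm{p_s(x,\cdot)}_{L^q(\mu)} \le \tilde\beta_q \sqrt{p_b(x,x)}$ ties $b$ to $s$, while the diagonal estimate requires $b$ to exceed $2c + 2t_1$, and one must check these are simultaneously achievable — which works only because hyperboundedness (Remark \ref{rem:hyperbdd}) lets us fix $a$ and $c$ first and then push $s$ up. A secondary technical point is justifying the almost-everywhere validity of the diagonal expansion $p_b(x,x) = \sum_k e^{-\lambda_k b}\varphi_k(x)^2$ (via $p_{b/2} \in L^2(\mu\otimes\mu)$, Fubini, and Parseval for $p_{b/2}(x,\cdot)$) and the positivity needed for Tonelli; these are standard. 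Everything else — the duality, the trivial $L^a$-monotonicity on probability spaces, and the appeals to Remark \ref{rem:hyperbdd} — is routine.
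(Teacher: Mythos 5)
Your proof is correct, and the cycle $(1)\Rightarrow(1\,\mathrm{bis})\Rightarrow(2\,\mathrm{bis})\Rightarrow(2)\Rightarrow(1)$ does close; the soft implications and $(2)\Rightarrow(1)$ coincide with the paper's. But for the central implication you take a genuinely different, and considerably heavier, route than the paper. The paper proves $(1)\Rightarrow(2)$ \emph{for the same fixed $q$} in two lines: expand $p_s(x,y)=\sum_k e^{-\lambda_k s}\varphi_k(x)\varphi_k(y)$ and apply the triangle inequality in $L^q(\mu\otimes\mu)$, using that $\norm{\varphi_k\otimes\varphi_k}_{L^q(\mu\otimes\mu)}=\norm{\varphi_k}_{L^q(\mu)}^2$ together with the eigenfunction bound $\norm{\varphi_k}_{L^q(\mu)}=e^{\lambda_k t_q}\norm{P_{t_q}\varphi_k}_{L^q(\mu)}\le\beta_q e^{\lambda_k t_q}$; taking $s_q=4t_q$ gives the clean quantitative bound $\norm{p_{s_q}}_{L^q(\mu\otimes\mu)}\le\beta_q^2\norm{P_{t_q}}_{HS}^2$. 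It then gets $(2)\Rightarrow(2\,\mathrm{bis})$ by the chain $(2)\Rightarrow(1)\Rightarrow(1\,\mathrm{bis})\Rightarrow(2\,\mathrm{bis})$, which works precisely because $q$ is preserved in $(1)\Leftrightarrow(2)$. You instead prove $(1\,\mathrm{bis})\Rightarrow(2\,\mathrm{bis})$ by staying on the one-variable side: a pointwise duality/Chapman--Kolmogorov bound $\norm{p_s(x,\cdot)}_{L^q(\mu)}\le\tilde\beta_q\sqrt{p_b(x,x)}$ followed by a multilinear H\"older expansion of $\int p_b(x,x)^m\,d\mu$ — all of which is correct (including the time bookkeeping $b-2c\ge 2t_1$, the eigenfunction $L^{2m}$ bound, and the flagged measurability/Parseval technicalities), but it costs you the integer rounding $m=\lceil q/2\rceil$, an extra application of hyperboundedness at exponent $2m$, and a messier constant. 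The moral difference is that the tensor structure $\varphi_k(x)\varphi_k(y)$ makes the two-variable $L^q$ norm factor exactly, so there is no need to pass through the diagonal; your argument is essentially what one is forced into when only on-diagonal information is available.
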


\begin{proof} \hfill

\noindent
$(1\;bis) \Rightarrow (1)$ and $(2\;bis) \Rightarrow (2)$ are trivial. \\

\noindent
$(1) \Rightarrow (1\;bis)$.\\
This follows from Remark \ref{rem:hyperbdd} and the fact that $t \mapsto \norm{P_t}_{HS}$ is non-increasing. \\

\noindent
$(1) \Rightarrow (2)$ for fixed $q > 2$. \\
The spectrum is discrete since $P_{t_q}$ is Hilbert-Schmidt and hence compact (in fact, by Remark \ref{rem:Miclo}, hyperboundedness implies hypercontractivity, which is known \cite{WangSuperPoincareAndIsoperimetry} in the finite-dimensional setting to imply discreteness of spectrum). Adapting an argument from \cite[Chapter 5.3]{BGL-Book}, we have by (\ref{eq:pt}):
\[
\norm{p_s}_{L^q(\mu \otimes \mu)} \leq \sum_{k=1}^\infty \exp(-\lambda_k s) \norm{\varphi_k}_{L^q(\mu)}^2 .
\]
But by assumption (1):
\[
\exp(-\lambda_k t_q) \norm{\varphi_k}_{L^q(\mu)} = \norm{P_{t_q} \varphi_k}_{L^q(\mu)} \leq \beta_q \norm{\varphi_k}_{L^2(\mu)}  = \beta_q ,
\]
and hence:
\[
\norm{p_s}_{L^q(\mu \otimes \mu)} \leq \beta_q^2  \sum_{k=1}^\infty \exp(\lambda_k (2 t_q - s)) . 
\]
Setting $s_q = 4 t_q$, we obtain the asserted bound (2):
\[
\norm{p_{s_q}}_{L^q(\mu \otimes \mu)} \leq \beta_q^2 \norm{P_{t_q}}^2_{HS} \leq \beta_q^2 Z_2^2 < \infty .
\]

\noindent
$(2) \Rightarrow (1)$ for fixed $q > 2$. \\
By H\"{o}lder's inequality, we always have:
\[
\norm{P_t f}_{L^q(\mu)} \leq \norm{p_t}_{L^q(\mu \otimes \mu)} \norm{f}_{L^{q'}(\mu)}  \; , \; q' = \frac{q}{q-1} .
\]
When $q > 2$, setting $t_q=s_q$, we have by Jensen's inequality and assumption (2):
\[
\norm{P_{t_q} f}_{L^q(\mu)} \leq Z_q \norm{f}_{L^2(\mu)} .
\]
In addition, we trivially have:
\[
\norm{P_{t_q}}_{HS} = \norm{p_{t_q}}_{L^2(\mu \otimes \mu)} \leq \norm{p_{t_q}}_{L^q(\mu \otimes \mu)} \leq Z_q ,
\]
yielding the desired (1).\\ 

\noindent
$(2) \Rightarrow (2\;bis)$.\\
This follows by the chain of implications $(2) \Rightarrow (1) \Rightarrow (1 \; bis) \Rightarrow (2 \; bis)$, 
where the last implication follows since the value of $q > 2$ remained unaltered in the equivalence $(1) \Leftrightarrow (2)$. 
\end{proof}

Note that the proof of Proposition \ref{prop:dichotomy} is very general, and so it is not restricted to the finite-dimensional weighted-manifold setting. 
This proposition entails the following trichotomy for the heat-kernel, which we find interesting and surprising: 

\begin{enumerate}
\item For all $t > 0$, $p_t \notin L^2(\mu \otimes \mu)$. \\
(this happens if the heat semi-group $P_t$ is \textbf{never} Hilbert-Schmidt, or equivalently, Schatten class $q \in [1,\infty)$). 
\item For all $t > t_2 \geq 0$, $p_{t} \in L^2(\mu \otimes \mu)$, but for any $t > 0$ and $q > 2$, $p_t \notin L^q(\mu \otimes \mu)$. \\
(this happens if the heat semi-group $P_t$ is eventually Hilbert-Schmidt, but is \textbf{never} hyperbounded).
\item For \textbf{any} $q \in (2,\infty)$, for all $t > t_q \geq 0$,  the heat kernel $p_{t}$ is in $L^q(\mu \otimes \mu)$. \\
(this happens iff $P_t$ is eventually Hilbert-Schmidt and hyperbounded).
\end{enumerate}
By Proposition \ref{prop:Z}, the third scenario applies to any weighted-manifold satisfying $\CD(\rho,\infty)$, $\rho \in \Real$, in conjunction with a log-Sobolev inequality with constant $L > 4 (-\rho)_{+}$ (in that case, the fact that $p_{t} \in L^q(\mu \otimes \mu)$ could have been deduced directly from (\ref{eq:kernel-estimate}), but surprisingly, this would require a stronger log-Sobolev constant when $\rho < 0$). In particular, the third scenario applies to the spaces $(\Real^n,\abs{\cdot},\nu^n_p)$ with $p \in [2,\infty]$. By Remark \ref{rem:scenario2}, the second scenario applies to the latter spaces when $p \in (1,2)$. The first scenario applies to the case $p=1$, since when the spectrum is not discrete $P_t$ cannot be compact and in particular is not Hilbert-Schmidt. In the infinite-dimensional setting, the first scenario can also apply when $P_t$ is hypercontractive (such as for the infinite dimensional Gaussian measure, or Wiener space, which do not have discrete spectrum). But in the finite-dimensional setting, we did not find an example of an eventually hyperbounded $P_t$ which is not Hilbert-Schmidt; Proposition \ref{prop:Z} indicates that the generalized Ricci curvature lower bound $\rho$ should be very negative for this to be possible (as we suspect is the case).

\section{Concluding Remarks} \label{sec:conclude}

The reader may have noticed that the body of evidence in support of Conjecture \conjtwo is somewhat different than the one for Conjecture \ref{conj:4}. On the one hand, we have Caffarelli's Contraction Theorem, which confirms Conjecture \conjtwo for Euclidean $\CD(\rho,\infty)$ spaces, and whose analogue is still missing in the $\CD(\rho,n)$ context of Conjecture \ref{conj:4}. But on the other hand, various volumetric and spectral results which are known for $\CD(\rho,n)$ manifolds are missing for the $\CD(\rho,\infty)$ case. 

The most elementary example of the latter is the Bishop--Gromov volume comparison theorem (e.g. \cite{GHLBookEdition3}). This classical result for $\CD(\rho,n)$ manifolds has since been generalized to the $\CD(\rho,N)$ setting for $N \in [n,\infty)$ (for weighted manifolds by Qian \cite{QianWeightedVolumeThms} and for very general metric-measure spaces by Sturm \cite{SturmCD12} and Lott--Villani \cite{LottVillaniGeneralizedRicci}). However, to the best of our knowledge, there is no sharp version for $\CD(\rho,\infty)$ spaces, perhaps since the latter condition does not feel the topological dimension $n$ of the underlying space, and since the associated Gaussian model space $(\Real^n,\abs{\cdot},\gamma^n_\rho)$ is non-homogeneous (for some non-sharp versions, see Sturm \cite{SturmCD12}). However, observe that Conjecture \conjtwo would imply that for any $(\Real^n,g,\mu)$ satisfying $\CD(\rho,\infty)$ with $\rho > 0$ and $\mu(\Real^n) = 1$, we have:
\begin{equation} \label{eq:BG}
\exists x_0 \in \Real^n \;\;\; \forall r > 0 \;\;\;  \mu(B_g(x_0,r)) \geq \gamma_\rho^n(B_{\abs{\cdot}}(0,r)) ,
\end{equation}
where $B_g(x_0,r)$ denotes the geodesic ball of radius $r$ about $x_0$ with respect to the metric $g$. 
Indeed, if $T : (\Real^n,\abs{\cdot},\gamma_\rho^n) \rightarrow (\Real^n,g,\mu)$ is the contracting map pushing forward $\gamma_\rho^n$ onto $\mu$, simply set $x_0 = T(0)$ and use the contraction property. In particular, this would imply that:
\begin{equation} \label{eq:BG2}
\exists x_0 \in \Real^n \;\;\; \int d(x,x_0)^2 d\mu(x) \leq \int \abs{x}^2 d\gamma_\rho^n(x) = \frac{n}{\rho} ,
\end{equation}
which for example would be useful to know for obtaining eigenvalue estimates in Corollary \ref{cor:eigen-estimates}.
We stress that we do not know how to prove even this simple consequence of Conjecture \conjtwo, which is somewhat of an ominous sign in regards to the latter. Of course, by Caffarelli's Contraction Theorem, (\ref{eq:BG}) and (\ref{eq:BG2}) are true when the underlying space is Euclidean. 

Consequently, we are somewhat skeptical of the validity of the tentative Conjectures \conjone and \conjtwo, which we mainly stated for motivational purposes. A much safer variant of these tentative conjectures, which we refrained from stating explicitly in the Introduction for the sake of expositional simplicity and coherence, is to restrict our attention to weighted manifolds satisfying a Graded Curvature-Dimension condition, to be introduced in \cite{EMilman-GradedCD}. With this more restrictive condition, instead of requiring a single lower bound on $\Ric_{g,\mu} = \Ric_g + \nabla^2_g V$, one imposes separate bounds on the two components of the latter tensor  (or some other linear combination thereof). Specifically we make the following:
 
\begin{conj12}
The tentative Conjectures \conjone and \conjtwo hold true when restricted to weighted manifolds satisfying $\Ric_{g} \geq 0$ and $\nabla^2_g V \geq \rho g$ (and still diffeomorphic to $\Real^n$).
\end{conj12}
Note that in any case, since $\rho > 0$, a manifold as above cannot be compact, since otherwise at the point where the maximum of $V$ is attained we would have $\nabla^2_g V \leq 0$. For such weighted manifolds, we can in fact prove (\ref{eq:BG}) using the usual Jacobi field approach - see the forthcoming \cite{EMilman-GradedCD}.

\setlinespacing{0.82}
\setlength{\bibspacing}{2pt}

\bibliographystyle{plain}
\bibliography{../../../ConvexBib}

\end{document}